\theoremstyle{plain}
\newtheorem{thm}{Theorem}[section]
\newtheorem*{thm*}{Theorem}
\newtheorem*{prop*}{Proposition}
\newtheorem{lem}[thm]{Lemma}
\newtheorem{prop}[thm]{Proposition}
\newtheorem{cor}[thm]{Corollary}
\theoremstyle{definition}
\newtheorem{defi}[thm]{Definition}
\newtheorem{rmk}[thm]{Remark}
\newtheorem{eg}[thm]{Example}
\newtheorem{question}[thm]{Question}
\DeclareMathOperator{\cd}{cd}
\DeclareMathOperator{\Eff}{Eff}
\DeclareMathOperator{\Amp}{Amp}
\DeclareMathOperator{\red}{red}
\DeclareMathOperator{\ann}{ann}
\DeclareMathOperator{\height}{ht}
\newcommand{\NN}{\ensuremath{\mathbb N}}
\newcommand{\PP}{\ensuremath{\mathbb P}}
\newcommand{\QQ}{\ensuremath{\mathbb Q}}
\newcommand{\RR}{\ensuremath{\mathbb R}}
\newcommand{\ZZ}{\ensuremath{\mathbb Z}}
\newcommand{\CC}{\ensuremath{\mathbb C}}
\newcommand{\OO}{\ensuremath{\mathcal O}}
\newcommand\lra{\longrightarrow}
\newcommand{\la}{\ensuremath{\lambda}}
\newcommand{\shf}{\ensuremath{{\mathcal F}}}
\newcommand{\she}{\ensuremath{{\mathcal E}}}
\newcommand{\sha}{\ensuremath{{\mathcal A}}}
\newcommand{\shn}{\ensuremath{{\mathcal N}}}
\newcommand{\shl}{\ensuremath{{\mathcal L}}}
\newcommand{\shk}{\ensuremath{{\mathcal K}}}
\newcommand{\shr}{\ensuremath{{\mathcal R}}}
\newcommand{\hh}[3]{\ensuremath{h^{#1}\left(#2,#3\right)}}
\newcommand{\HH}[3]{\ensuremath{H^{#1}\left(#2,#3\right)}}
\newcommand{\HHv}[3]{\ensuremath{H^{#1}(#2,#3)}}
\newcommand{\ha}[3]{\ensuremath{\widehat{h}^{#1}\left(#2,#3\right)}}
\newcommand{\ses}[3]{\ensuremath{0\rightarrow #1 \rightarrow #2 \rightarrow #3 \rightarrow 0}}
\newcommand{\zj}[1]{\ensuremath{ \left( #1 \right) }}
\newcommand{\st}[1]{\ensuremath{ \left\{ #1 \right\} }}
\newcommand{\deq}{\ensuremath{ \stackrel{\textrm{def}}{=}}}
\newcommand{\shll}[1]{\ensuremath{\shl^{\otimes #1}}}
\def\.{\cdot}
\def\^{\widehat}
\def\~{\widetilde}
\newcommand\newop[2]{\def#1{\mathop{\rm #2}\nolimits}}
\newop\voll{vol}
\newop\NS{NS}
\newop\Neg{Neg}
\newop\Null{Null}
\newop\Pic{Pic}
\newop\Bstab{B_{+}}
\newop\Bst{B_{stab}}
\newop\Bres{B_{restr}}
\newop\Bplus{\mathbf{B}_+}
\newop\Bminus{\mathbf{B}_-}
\newop\Exc{Exc}
\newop\B{\mathbf{B}{}}
\newop\Bs{Bs}
\newop\End{End}
\newop\Amp{Amp}
\newop\Face{Face}
\newop\BigCone{Big}
\newop\index{ind}
\newop\reg{reg}
\newcommand{\equ}{\ensuremath{\,=\,}}
\newcommand{\dleq}{\ensuremath{\,\leq\,}}
\newcommand{\dgeq}{\ensuremath{\,\geq\,}}
\newcommand{\dsubseteq}{\ensuremath{\,\subseteq\,}}
\newop\Gal{Gal}
\numberwithin{equation}{section} 
\begin{document}

\title{Partial positivity: geometry and cohomology of $\mathbf{q}$-ample line bundles}

\author{Daniel Greb}
\address{Daniel Greb, Ruhr-Universit\"at Bochum, Fakult\"at f\"ur Mathematik, Arbeitsgruppe Al\-ge\-bra/Top\-o\-lo\-gie, 44780 Bochum, Germany}
\email{{\tt daniel.greb@ruhr-uni-bochum.de}}

\author{Alex K\"uronya}
\address{Alex K\"uronya, Budapest University of Technology and Economics, Department of Algebra, P.O. Box 91, H-1521 Budapest, Hungary}
\address{Albert-Ludwigs-Universit\"at Freiburg, Mathematisches Institut, Eckerstra\ss e 1, D-79104 Frei\-burg, Germany}
\email{{\tt alex.kueronya@math.uni-freiburg.de}}

\begin{abstract}
We give an overview of partial positivity conditions for line bundles, mostly from a cohomological point of view. Although the current work is to a large extent of expository nature, 
we present some minor improvements over the existing literature and a new result: a Kodaira-type vanishing theorem for effective $q$-ample Du Bois divisors and log canonical pairs.
\end{abstract}

\dedicatory{To Rob Lazarsfeld on the occasion of his 60\textsuperscript{th} birthday}

\maketitle

\tableofcontents

\section{Introduction}

Ampleness is one of the central notions of  algebraic geometry, possessing  the  extremely useful feature  that it has geometric, numerical, and 
cohomological characterizations. Here we will concentrate on its cohomological side. The fundamental result in this direction
is the theorem of Cartan--Serre--Grothendieck (see \cite[Theorem 1.2.6]{PAG}): for  a complete projective scheme $X$, and a line bundle  $\shl$ 
on $X$, the following are equivalent to $\shl$ being ample:
\begin{enumerate}
 \item There exists a positive integer $m_0=m_0(X,\shl)$ such that $\shl^{\otimes m}$ is very ample for all $m\geq m_0$. 
\item For every coherent sheaf $\shf$ on $X$, there exists a positive integer $m_1=m_1(X,\shf,\shl)$ for which $\shf\otimes\shl^{\otimes m}$
is globally generated for all $m\geq m_1$.
\item For every coherent sheaf $\shf$ on $X$, there exists a positive integer $m_2=m_2(X,\shf,\shl)$ such that 
\[
 \HH{i}{X}{\shf\otimes\shl^{\otimes m}} \equ \{0\}
\]
for all $i\geq 1$ and all $m\geq m_2$.
\end{enumerate}

We will focus on the direction pointed out by Serre's vanishing theorem, part (3) above, and concentrate on line bundles with vanishing cohomology above a certain degree. 

Historically, the first result in this direction is due to Andreotti and Grauert \cite{AG}. They prove 
that given a compact complex manifold $X$ of dimension $n$, and a holomorphic line bundle $\shl$ on $X$ equipped with a Hermitian metric whose curvature 
is a $(1,1)$-form with at least $n-q$ positive eigenvalues at every point of $X$, then for every coherent sheaf $\shf$ on $X$, there exists a natural number $m_0(\shl,\shf)$ such that 
\begin{equation}\label{eq:definingqvanishing}
 \HH{i}{X}{\shf\otimes\shl^{\otimes m}} \equ \{0\}\quad \quad \text{ for all } m \geq m_0(\shl,\shf) \text{ and for all }i>q. 
\end{equation}
In \cite{DPS}, Demailly, Peternell, and Schneider posed the question under what circumstances the converse does hold. I.e., they asked: assume that for every coherent sheaf $\shf$ there exists $m = m_0(\shl,\shf)$ such that the vanishing \eqref{eq:definingqvanishing} holds. Does $\shl$ admit a Hermitian metric with the expected number of positive eigenvalues? In dimension two, Demailly \cite{Dem_AG} proved an asymptotic version of this converse to the Andreotti--Grauert Theorem using tools related to asymptotic cohomology; subsequently, Matsumura \cite{Mats11} gave a positive answer to the question for surfaces. However, there exist higher-dimensional counterexamples to the converse Andreotti--Grauert problem in the range $\tfrac{\dim X}{2}-1<q<\dim X-2$, constructed by Ottem \cite{Ottem}.

We will study line bundles with the property of the conclusion of the Andreotti--Grauert theorem; let $X$ be a complete scheme of dimension $n$, 
$0\leq q\leq n$ an integer. A line bundle $\shl$ is called \emph{naively  $q$-ample}, 
or simply \emph{$q$-ample} if for every coherent sheaf $\shf$ on $X$ there  exists an integer $m_0=m_0(\shl,\shf)$ for which
 \[
 \HH{i}{X}{\shf\otimes\shl^{\otimes m}} \equ \{0\}
\]
for $m\geq m_0$ and for all $i>q$. 

It is a consequence of the Cartan--Grothendieck--Serre result discussed above that $0$-ample\-ness reduces to the usual notion of ampleness. 
In \cite{DPS}, the authors studied naive $q$-ampleness along with various other notions of partial cohomological positivity. 
Part of their approach is to look at positivity of restrictions to elements of a complete flag, and they use it to give 
a partial vanishing theorem  similar to that of Andreotti--Grauert. 

In a beautiful paper, \cite{Totaro}, Totaro  proves that the competing partial positivity concepts are in fact all equivalent in characteristic zero, 
thus laying down the foundations for  a very satisfactory theory. His result goes as follows: let $X$ be a projective scheme of dimension $n$, 
$\sha$ a sufficiently ample line bundle on $X$, $0\leq q\leq n$ a natural number.  Then for all line bundles $\shl$ on $X$,
 the following are equivalent:
\begin{enumerate}
\item $\shl$ is naively $q$-ample
\item $\shl$ is uniformly $q$-ample, that is, there exists a constant $\la>0$ such that 
\[
 \HH{i}{X}{\shl^{\otimes m}\otimes\sha^{\otimes -j}} \equ \{0\}
\]
for all $i>q$, $j>0$, and $m\geq \la j$.
 \item There exists a positive integer $m_1=m_1(\shl,\sha)$ such that 
\[
 \HH{q+i}{X}{\shl^{\otimes m_1}\otimes\sha^{\otimes -(n+i)}} \equ \{0\}
\]
for all $1\leq i\leq n-q$.
\end{enumerate}

As an outcome, Totaro can prove that on the one hand, for a given $q$, the set of $q$-ample line bundles forms a open cone in the N\'eron--Severi space, and on the other hand that $q$-ampleness is an open property in families as well.

In his recent article \cite{Ottem}, Ottem works out other basic properties of $q$-ample divisors and employs it to study subvarieties of higher codimension. We will give an overview of his results in Section~\ref{subsect:Lefschetz}.

Interestingly enough, prior to \cite{DPS}, Sommese \cite{Sommese} defined a geometric version of partial ampleness by studying the dimensions of the fibres of the morphism associated to a given
line bundle. In the case of semi-ample line bundles, where Sommese's notion is defined, he proves his condition to be equivalent to 
naive $q$-ampleness.  Although more limited in scope, Sommese's geometric notion extends naturally to vector bundles as well. 

One of the major technical vanishing theorems for ample divisors that does not follow directly from the definition is Kodaira's vanishing theorem: if $X$ is a smooth projective variety, defined over an algebraically closed field of characteristic zero, and $\shl$ is an ample line bundle on $X$, then 
\[\HH{j}{X}{\omega_X \otimes \shl} = \{0\} \quad \text{ for all }j > 0.\] We refer the reader to \cite{Kodaira} for the original analytic proof, to \cite{DeligneIllusie} for a subsequent algebraic proof, to \cite{Raynaud} for a counterexample in characteristic $p$, and to \cite[Chapter 9]{KollarShafarevich} as well as to \cite{EsnaultViehweg} for a general discussion of vanishing theorems. It is a natural question to ask whether an analogous vanishing holds for $q$-ample divisors. While one of the ingredients of classical proofs of Kodaira vanishing, namely Lefschetz' hyperplane section theorem, has been generalized to the $q$-ample setup by Ottem \cite[Cor.~5.2]{Ottem}, at the same time he gives a counterexample to the Kodaira vanishing theorem for $q$-ample divisors, which we recall in Section~\ref{subsect:Lefschetz}. In Ottem's example, the chosen $q$-ample divisor is not pseudo-effective. In Section~\ref{sect:qKodaira} we show that there is a good reason for this: we prove that $q$-Kodaira vanishing 
holds for reduced effective divisors which are not to singular; more precisely, we prove two versions of $q$-Kodaira vanishing which are related to log canonicity and the Du Bois condition.

\begin{thm*}[= Theorem~\ref{thm:reduceddivisor}]
 Let $X$ be a normal proper variety, $D\subset X$ a reduced effective (Cartier) divisor such that
\begin{enumerate}
 \item $\mathcal{O}_X(D)$ is $q$-ample,
 \item $X \setminus D$ is smooth, and $D$ (with its reduced subscheme structure) is Du Bois.
\end{enumerate}
Then, we have
\begin{align*}
 &\HH{j}{X}{\omega_X \otimes \mathcal{O}_X(D)}  = \{0\} \quad \text{ for all } j> q,
\intertext{as well as}
&\HH{j}{X}{\mathcal{O}_X(-D)} = \{0\} \quad \text{ for all }j < \dim X - q .
\end{align*}
\end{thm*}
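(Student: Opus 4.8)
The plan is to deduce both vanishing statements from the topology of the smooth open variety $U := X\setminus D$ through the Hodge theory of the Du Bois complex --- the key point being to identify the two coherent sheaves in question with the \emph{extreme} graded pieces of the (logarithmic) Du Bois complex of the pair $(X,D)$ --- and then to feed in the behaviour of the cohomology of complements of $q$-ample divisors. First I would observe that the hypotheses force mild singularities along $D$: since $D$ is a reduced Cartier divisor that is Du Bois and $X\setminus D$ is smooth, inversion of adjunction for Du Bois pairs (Kov\'acs--Schwede) shows that $X$ itself is Du Bois and that $(X,D)$ is a Du Bois pair. Consequently the zeroth graded piece of the Du Bois complex of the pair is the ideal sheaf, $\underline{\Omega}^0_{(X,D)}\simeq \OO_X(-D)$, while --- passing to a log resolution $\pi\colon(\widetilde X,\widetilde D)\to(X,D)$ that is an isomorphism over $U$, and using that the Grauert--Riemenschneider sheaf of a Du Bois variety coincides with its dualizing sheaf (Kov\'acs--Schwede--Smith) --- the top graded piece of the logarithmic Du Bois complex is $\underline{\Omega}^n_X(\log D)\simeq \omega_X\otimes\OO_X(D)$, where $n := \dim X$.

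Next I would invoke Du Bois's $E_1$-degeneration theorem and its logarithmic and relative variants (Guill\'en--Navarro Aznar et al.): since $X$ is proper, the stupid filtrations on these complexes induce the Hodge filtrations on the mixed Hodge structures on $H^\bullet(X,D;\CC)$ and $H^\bullet(U,\CC)$. Reading off the two extreme graded pieces then yields
\[
 H^j(X,D;\CC)\twoheadrightarrow \HH{j}{X}{\OO_X(-D)}
 \qquad\text{and}\qquad
 \HH{j}{X}{\omega_X\otimes\OO_X(D)}\hookrightarrow H^{j+n}(U,\CC),
\]
the second because $\omega_X\otimes\OO_X(D)$ occupies the $\mathrm{gr}^n_F$-slot, which coincides with $F^n$ and is hence a subspace. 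Since, moreover, $H^\bullet(X,D;\CC)\cong H^\bullet_c(U,\CC)$ and $U$ is a smooth oriented real $2n$-manifold, Poincar\'e--Lefschetz duality gives $H^j(X,D;\CC)\cong H^{2n-j}(U,\CC)^\vee$. It therefore suffices to prove $H^j(X,D;\CC)=0$ for $j<n-q$, equivalently $H^j(U,\CC)=0$ for $j>n+q$.

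This last, purely topological, statement is where the $q$-ampleness enters: by Ottem's $q$-ample Lefschetz hyperplane theorem \cite[Cor.~5.2]{Ottem} --- equivalently, the bound $\cd(X\setminus D)\leq n+q$ on cohomological dimension --- one has $H^j(U,\CC)=0$ for $j>n+q$, and hence, by the Poincar\'e--Lefschetz duality above, $H^j(X,D;\CC)=0$ for $j<n-q$. Feeding the former into the injection and the latter into the surjection of the previous paragraph gives $\HH{j}{X}{\omega_X\otimes\OO_X(D)}=0$ for $j>q$ and $\HH{j}{X}{\OO_X(-D)}=0$ for $j<n-q$.

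The step I expect to be the real obstacle is the identification $\underline{\Omega}^n_X(\log D)\simeq \omega_X\otimes\OO_X(D)$. The $\OO_X(-D)$-side is cheap once $(X,D)$ is known to be a Du Bois pair, but this one is a Grauert--Riemenschneider-type assertion that must be proved \emph{without} the crutch of Cohen--Macaulayness: Du Bois singularities need not be Cohen--Macaulay (e.g.\ cones over abelian varieties of dimension $\geq 2$), so one cannot simply Serre-dualize the first vanishing to obtain the second. Concretely one has to control $R\pi_*\bigl(\omega_{\widetilde X}(\widetilde D)\bigr)$ along the log resolution, comparing the reduced total transform $\widetilde D$ with $\pi^{*}D$ and using a relative vanishing theorem together with the fact that the Du Bois hypothesis forces the dualizing sheaf of $X$ to behave like that of a variety with rational singularities. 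Everything downstream of this identification --- the Hodge-theoretic surjection and injection, and the application of Ottem's theorem --- is formal.
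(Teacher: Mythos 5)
Your argument for the second vanishing, $H^j(X,\mathcal{O}_X(-D))=0$ for $j<n-q$, is correct and is essentially the paper's argument in different packaging: where you run the degeneration of the Du Bois complex of the pair $(X,D)$ against $H^j(X,D;\CC)\cong H^j_c(U,\CC)$ and then apply Poincar\'e--Lefschetz duality together with the bound $\cd(U)\leq n+q$, the paper applies Ottem's Lefschetz theorem to the restriction $H^j(X,\CC)\to H^j(D,\CC)$, uses the Du Bois property of $D$ to identify $H^j(D,\mathcal{O}_D)$ with $Gr^0_F H^j(D,\CC)$, invokes strictness of morphisms of mixed Hodge structures, and reads off the vanishing from the long exact sequence of $0\to\mathcal{O}_X(-D)\to\mathcal{O}_X\to\mathcal{O}_D\to 0$. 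These are equivalent routes resting on the same two inputs (Ottem's cohomological-dimension bound and the Du Bois control of $\mathrm{gr}^0_F$), and either is acceptable.

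The genuine problem is your route to the first vanishing. You correctly flag that the identification $\underline{\Omega}^n_X(\log D)\simeq\omega_X\otimes\mathcal{O}_X(D)$ (equivalently, $R\pi_*\omega_{\widetilde X}(\widetilde D)\simeq\omega_X(D)$ for a log resolution) is unproven in your write-up, so as it stands the proof of $H^j(X,\omega_X\otimes\mathcal{O}_X(D))=0$ for $j>q$ is incomplete. But the obstacle you erect against the Serre-duality route is illusory: while Du Bois varieties in general need not be Cohen--Macaulay, under the hypotheses of this theorem $X$ \emph{is}. By Schwede's characterisation of Du Bois singularities \cite[Theorem~5.1]{Schwede}, a normal variety containing a reduced Cartier divisor $D$ that is Du Bois, with $X\setminus D$ smooth, has rational singularities; hence $X$ is Cohen--Macaulay by \cite[Theorem~5.10]{KM98}, and Serre duality \cite[Theorem~5.71]{KM98} converts the already-established vanishing of $H^j(X,\mathcal{O}_X(-D))$ for $j\leq n-q-1$ directly into $H^j(X,\omega_X\otimes\mathcal{O}_X(D))=0$ for $j>q$. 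This is exactly how the paper concludes, and it lets you delete the entire discussion of $\underline{\Omega}^n_X(\log D)$ and the Grauert--Riemenschneider comparison. If you insist on avoiding duality, you must actually supply the proof of $R\pi_*\omega_{\widetilde X}(\widetilde D)\simeq\omega_X\otimes\mathcal{O}_X(D)$; merely naming it as the hard step does not discharge it.
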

\begin{thm*}[= Theorem~\ref{thm:QQversion}]
  Let $X$ be a proper Cohen-Macaulay variety, $\shl$ a $q$-ample line bundle on $X$, and $D_i$ different irreducible Weil divisors on $X$. Assume that $\shl^{m} \cong \mathcal{O}_X(\sum d_jD_j)$ for some integers $1 \leq d_j < m$. Set $m_j := m/\mathrm{gcd}(m, d_j)$. Assume furthermore that the pair $\bigl(X, \textstyle\sum (1- \frac{1}{m_j})D_j \bigr)$ is log canonical.
Then, we have
\begin{align*}
 &\HH{j}{X}{\omega_X \otimes \shl}  = \{0\} \quad \text{ for all } j> q,
\intertext{as well as}
&\HH{j}{X}{\shl^{-1}} = \{0\} \quad \text{ for all }j < \dim X - q .
\end{align*}
\end{thm*}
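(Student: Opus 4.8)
The plan is to imitate the proof of Theorem~\ref{thm:reduceddivisor}, but on a cyclic cover chosen so that the required Du Bois input is supplied by the log canonicity hypothesis rather than by an explicit smoothness assumption.

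First, since $X$ is Cohen--Macaulay, Serre duality identifies $\HH{j}{X}{\omega_X\otimes\shl}$ with $\HH{\dim X-j}{X}{\shl^{-1}}^{\vee}$, so the two asserted vanishings are equivalent and it suffices to prove $\HH{j}{X}{\shl^{-1}}=\{0\}$ for $j<\dim X-q$. Write $B=\sum d_jD_j$, $U=X\setminus\Supp B$, and let $\pi\colon Y\to X$ be the normalized $m$-fold cyclic cover determined by $\shl^{\otimes m}\cong\OO_X(B)$ (as in \cite{EsnaultViehweg}). Because $\pi$ is finite one has $\pi_*\OO_Y=\bigoplus_{i=0}^{m-1}\shl^{-i}\otimes\OO_X\!\bigl(\lfloor iB/m\rfloor\bigr)$, and since $1\le d_j<m$ the floor term vanishes for $i=1$; hence $\shl^{-1}$ is exactly the $i=1$ summand, i.e.\ the $\chi$-isotypical piece of $\pi_*\OO_Y$ for a primitive character $\chi$ of the Galois group $\mu_m$, and $\HH{j}{X}{\shl^{-1}}$ is the $\chi$-isotypical summand of $\HH{j}{Y}{\OO_Y}$. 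So it is enough to show that this summand vanishes in degrees $j<\dim X-q$.

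Next I would identify the singularities of $Y$. The ramification index of $\pi$ along $D_j$ equals $m_j$, which gives the crepant relation $K_Y=\pi^{*}\bigl(K_X+\sum(1-\tfrac1{m_j})D_j\bigr)$; hence the log canonicity of the pair $\bigl(X,\sum(1-\tfrac1{m_j})D_j\bigr)$ is equivalent to $Y$ being log canonical, and by the theorem of Koll\'ar and Kov\'acs that log canonical singularities are Du Bois, $Y$ is Du Bois. Consequently $\underline{\Omega}^{0}_{Y}\simeq\OO_Y$, and since $Y$ is proper this produces, $\mu_m$-equivariantly and functorially in the Hodge filtration, a surjection $\HH{j}{Y}{\CC}\twoheadrightarrow\HH{j}{Y}{\OO_Y}$ with $\HH{j}{Y}{\OO_Y}=\operatorname{gr}^0_F\HH{j}{Y}{\CC}$. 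I would also record the topological fact that will make the cover useful: the restriction of $\chi$ to a small loop around $D_j$ is the root of unity $e^{2\pi i d_j/m}$, whose order is $m_j$; since $1\le d_j<m$ forces $m_j\ge 2$, this monodromy is \emph{nontrivial} around every branch component, so the $\chi$-isotypical part of $\pi_*\CC_Y$ vanishes at the generic point of each component of $\Supp B$ and agrees, away from a closed set of codimension $\ge 2$, with the extension by zero of the rank-one local system $\CC_{U,\chi}$ on $U$.

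Finally I would feed in the positivity and run the duality step. As $\shl$, hence $\shl^{\otimes m}\cong\OO_X(B)$, is $q$-ample and $B$ is effective, the complement $U=X\setminus\Supp B$ has $\cd(U)\le q$ by the result of Ottem on complements of effective $q$-ample divisors recalled in Section~\ref{subsect:Lefschetz}; pulling back along the finite map $\pi$ gives $\cd(Y\setminus\pi^{-1}(\Supp B))\le q$, whence the Betti cohomology of this open set vanishes above degree $\dim X+q$ by the Artin--Grothendieck-type bound, and likewise $\HH{\ell}{U}{\CC_{U,\chi}}=\{0\}$ for $\ell>\dim X+q$. The concluding argument is then the one in the proof of Theorem~\ref{thm:reduceddivisor}: the Du Bois complex of $Y$ (which is just $\OO_Y$), together with its Hodge filtration and the $\mu_m$-action, plays the role of the Du Bois complex of the pair $(X,\Supp B)$ there, and Serre/Verdier duality on the proper scheme $X$ converts the high-degree vanishing for $\CC_{U,\chi}$ into the vanishing of $\operatorname{gr}^0_F$ of the $\chi$-isotypical cohomology in degrees $<\dim X-q$ --- the denominators $1-\tfrac1{m_j}$ being exactly what makes the floor terms $\lfloor iB/m\rfloor$ come out so that the relevant isotypical pieces are the structure sheaf and the dualizing sheaf. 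The main obstacle, and the one place where one pays for not being in the smooth normal-crossings situation of Esnault--Viehweg, is precisely this last duality/Hodge-theoretic step in the presence of singularities: one must know that the Du Bois complex of the log canonical cover $Y$ carries enough Hodge-theoretic structure (weights, $E_1$-degeneration, Poincar\'e--Verdier duality for the isotypical local system) to run the degeneration argument. I expect this to be handled by invoking the Hodge theory of Du Bois singularities directly, and, where a smooth model is genuinely needed, by passing to a $\mu_m$-equivariant resolution of $Y$ --- transporting only the topological data ($\cd$ bound and local monodromy), since birational pullback destroys $q$-ampleness, while keeping the coherent conclusion on $Y$ itself.
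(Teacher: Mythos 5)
Your opening moves --- reducing via Serre duality on the Cohen--Macaulay $X$ to a single vanishing statement, passing to the normalized cyclic cover $Y$ of $\shl^{m}\cong\OO_X(\sum d_jD_j)$, computing $K_Y=\pi^*\bigl(K_X+\sum(1-\tfrac1{m_j})D_j\bigr)$, and invoking Koll\'ar--Kov\'acs to conclude that $Y$ is log canonical, hence Du Bois, with $\HH{j}{Y}{\CC}\twoheadrightarrow\HH{j}{Y}{\OO_Y}$ --- coincide with the paper's Proposition~\ref{prop:surjection}. After that you diverge, and the divergence is where the gap lies. You try to transplant the topological argument of Theorem~\ref{thm:reduceddivisor}: bound $\cd(X\setminus\Supp B)$ by $q$ via Ottem, deduce high-degree vanishing of the Betti cohomology of the $\chi$-isotypical local system, and then run a Poincar\'e--Verdier duality and $Gr^0_F$ argument on the singular cover $Y$. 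Two problems. First, the passage from $\cd(U)\le q$ to $\HH{\ell}{U}{\CC}=\{0\}$ for $\ell>n+q$ uses the Fr\"olicher spectral sequence on a \emph{smooth} open set (this is exactly how Theorem~\ref{thm:gen Lefschetz} is proved), and here nothing forces $X\setminus\Supp B$ to be smooth: $X$ is merely normal and Cohen--Macaulay. Second, and more seriously, the concluding ``duality/Hodge-theoretic step in the presence of singularities'' that you yourself flag as the main obstacle is not a technicality that can be deferred: controlling the Hodge and weight filtrations on the isotypical pieces of $\HH{\bullet}{Y}{\CC}$ for a log canonical $Y$, and dualizing correctly across the branch divisor and the singular locus, is precisely the hard content of the Esnault--Viehweg machinery, and you give no argument for it. As written, the proof does not close.

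The paper avoids all of this with a much shorter mechanism, and you already have every ingredient it needs. From the Du Bois property of $Y$ one extracts, via \cite[Theorem~9.12]{KollarShafarevich}, the surjectivity of $\HH{i}{X}{\shl^{[-1]}(-\sum n_jD_j)}\to\HH{i}{X}{\shl^{[-1]}}$ for all $i$ and all $n_j\ge 0$. Taking $k\equiv 1\pmod m$ this yields surjections $\HH{i}{X}{\shl^{-k}}\twoheadrightarrow\HH{i}{X}{\shl^{-1}}$, which Serre duality on the Cohen--Macaulay $X$ converts into injections $\HH{n-i}{X}{\omega_X\otimes\shl}\hookrightarrow\HH{n-i}{X}{\omega_X\otimes\shl^{k}}$. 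For $k\gg 0$ the right-hand side vanishes in degrees $>q$ simply by applying the definition of $q$-ampleness to the coherent sheaf $\omega_X$; the dual statement for $\shl^{-1}$ then follows by one more application of Serre duality. No Lefschetz theorem, no cohomological dimension bound, and no Hodge theory beyond the single surjection $\HH{j}{Y}{\CC}\twoheadrightarrow\HH{j}{Y}{\OO_Y}$ are required. The lesson is that in this $\QQ$-effective setting the $q$-ampleness hypothesis is most naturally consumed as Serre-type vanishing for $\omega_X\otimes\shl^{k}$, not as a bound on the topology of the complement.
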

\subsubsection*{Global conventions}
If not mentioned otherwise, we work over the complex numbers, and all divisors are assumed to be Cartier. 
\subsubsection*{Acknowledgments} Both authors were partially supported by the DFG-Forschergruppe 790 ''Classification of Algebraic Surfaces and Compact Complex Manifolds``, as well as by the DFG-Graduiertenkolleg 1821 ''Cohomological Methods in Geometry``. The first named author gratefully acknowledges additional 
support by the Baden-W\"urttemberg-Stiftung through the ''Eliteprogramm f\"ur Postdoktorandinnen und Postdoktoranden``. The second-named author
was in addition partially supported by the OTKA grants 77476 and  81203 of the Hungarian Academy of Sciences. 

\section{Overview of the theory of $q$-ample line bundles}

\subsection{Vanishing of cohomology groups and partial ampleness}
Starting with the pioneering work \cite{DPS} of Demailly--Peter\-nell--Schneider related to the Andreotti--Grauert problem, there has been a certain interest in studying line bundles 
with vanishing cohomology above a given degree. Just as big line bundles are a generalization of ample ones along its geometric side, these so-called $q$-ample bundles focus on a weakening 
of the cohomological characterization of ampleness. In general, there exist competing definitions of various flavours, 
which were shown to be equivalent in characteristic zero  in \cite{Totaro}.

\begin{defi}[Definitions of partial ampleness]\label{defi:3 partial ampleness}
Let $X$ be a complete scheme of dimension $n$ over an algebraically closed field of arbitrary characteristic, $\shl$ an invertible sheaf on $X$, $q$ a natural number.
\begin{enumerate}
 \item The invertible sheaf $\shl$ is called \emph{naively $q$-ample}, if for every coherent sheaf $\shf$ on $X$ there exists a natural number $m_0=m_0(\shl,\shf)$ having the 
property that 
\[
 \HH{i}{X}{\shf\otimes\shll{m}} \equ \{0\} \ \ \ \text{for all $i>q$ and $m\dgeq m_0$.}
\]
\item Fix a very ample invertible sheaf $\sha$ on $X$. We call $\shl$ \emph{uniformly $q$-ample} if  there exists a constant $\la=\la(\sha,\shl)$ such that 
\[
 \HH{i}{X}{\shll{m}\otimes\sha^{\otimes -j}} \equ \{0\} \ \ \ \text{for all $i>q$, $j>0$, and $m\dgeq \la\cdot j$.}
\]
\item Fix a Koszul-ample invertible sheaf $\sha$ on $X$. We say that $\shl$ is \emph{$q$-$T$-ample}, if there exists a positive integer $m_1=m_1(\sha,\shl)$ satisfying 

\begin{eqnarray*}
  \HH{q+1}{X}{\shll{m_1}\otimes\sha^{\otimes -(n+1)}} & \equ &  \HH{q+2}{X}{\shll{m_1}\otimes\sha^{\otimes -(n+2)}} \\
  \ldots & = & \HH{n}{X}{\shll{m_1}\otimes\sha^{\otimes -2n+q}} \equ \{0\}
\end{eqnarray*}
\end{enumerate}
An integral Cartier divisor is called \emph{naively $q$-ample/uniformly $q$-ample/$q$-$T$-ample}, if the invertible sheaf $\OO_X(D)$ has the appropriate 
property. 
\end{defi}

\begin{rmk}[Koszul-ampleness]
We recall that  a connected locally finite\footnote{In this context locally finite means that  $\dim R_i<\infty$ for all graded pieces.}
graded ring $R_\bullet = \oplus_{i=0}^{\infty}R_i$ is called $N$-Koszul
for a positive integer  $N$, if the field $k=R_0$ has a resolution 
\[
 \ldots \lra M_1 \lra M_0 \lra k \lra 0
\]
as a graded $R_\bullet$-module, where for all $i\geq N$ the module $M_i$ is free and generated in degree $i$. 

In turn, a very ample line bundle $\sha$ on a projective scheme $X$ (taken to be connected and reduced to arrange that the ring of its regular functions $k \deq \OO_X(X)$ is a field)
is called $N$-Koszul if the section ring $R(X,\sha)$ is $N$-Koszul. The line bundle $\sha$ is said to be \emph{Koszul-ample} if it is $N$-Koszul with $N=2\dim X$. 
It is important to point out that Castelnuovo--Mumford regularity with respect to a  Koszul-ample line bundle has favourable  properties. 

If $\sha$ is an arbitrary ample line bundle on $X$, then Backelin \cite{Backelin} showed that  there exists a positive integer  $k_0\in\NN$ having the property that 
$\sha^{\otimes k}$ is Koszul-ample for all $k\geq k_0$. 
\end{rmk}

\begin{rmk}
Naive $q$-ampleness is the immediate extension of the Grothendieck--Cartan--Serre vanishing criterion for ampleness. Uniform $q$-ampleness first appeared in \cite{DPS}; the term
$q$-$T$-ampleness was coined by Totaro in \cite[Section 7]{Totaro} extending the idea of Castelnuovo--Mumford regularity. 

A line bundle is ample if and only if it is naively $0$-ample, while all line bundles are $n=\dim X$-ample.
\end{rmk}

\begin{eg}\label{ex:blowup}
One source of examples of $q$-ample divisors  comes from ample vector bundles: according to \cite[Proposition 4.5]{Ottem}, if $\she$ is an ample vector bundle of rank $r\leq \dim X$
on a scheme $X$, $s\in\HH{0}{X}{\she}$, then $Y\deq Z(s)\subseteq X$ is an ample subvariety. By definition, this means that $\OO_{X'}(E)$ is $(r-1)$-ample, where 
$\pi:X'\to X$ is the blow-up of $X$ along $Y$ with exceptional divisor $E$, cf.~Definition~\ref{defi:amplesubscheme}.
\end{eg}

\begin{rmk}
A straightforward sufficient condition for (naive) $q$-ampleness can be obtained by studying restrictions of $\shl$ to general complete intersection subvarieties. 
More specifically, the following claim is shown  in \cite[Theorem A]{qAmp}: let $X$ be a projective variety over the complex numbers, $L$ a Cartier divisor, 
$A_1,\dots,A_q$   very ample  Cartier divisors on $X$ such that $L|_{E_1\cap\dots\cap E_q}$ is ample for general $E_j\in |A_j|$. 
Then, for any coherent sheaf $\shf$ on $X$ there exists an integer $m(L,A_1,\dots,A_q,\shf)$ such that 
\[
 \HHv{i}{X}{\shf\otimes \OO_X(mL+N+\sum_{j=1}^{q}k_jA_j)} \equ \{0\}
\]
for all $i>q$, $m\geq m(L,A_1,\dots,A_q,\shf)$,  $k_j\geq 0$, and all nef divisors $N$. In particular, the conditions of the above claim ensure that $L$ is $q$-ample.
\end{rmk}

\begin{rmk}
There are various implications among the three definitions over an arbitrary field. 
As it was verified  in \cite[Proposition 1.2]{DPS} by an argument via resolving coherent sheaves by direct sums  of ample line 
bundles\footnote{Note that the resulting resolution is not guaranteed to be finite; see \cite[Example 1.2.21]{PAG} for a discussion of this possibility.}, 
 a uniformly $q$-ample line bundle is necessarily naively $q$-ample. On the other hand, naive $q$-ampleness implies $q$-$T$-ampleness by definition. 
\end{rmk}

\begin{rmk}
The idea behind the definition of $q$-$T$-ampleness is to reduce the question of $q$-ampleness to the vanishing of finitely 
many cohomology groups. It is not known whether the three definitions coincide in positive characteristic. 
\end{rmk}
The following result of Totaro compares the three different approaches to $q$-ampleness in characteristic zero.
\begin{thm}[Totaro; \cite{Totaro}, Theorem~8.1]\label{thm:Totaro equivalence}
Over a field of characteristic zero, the three definitions of partial ampleness are equivalent. 
\end{thm}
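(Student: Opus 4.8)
The plan is to leverage the implications that hold over an arbitrary field: a uniformly $q$-ample line bundle is naively $q$-ample (by the Demailly--Peternell--Schneider resolution argument), and a naively $q$-ample one is $q$-$T$-ample by definition. So the entire content reduces to the single implication ``$\shl$ is $q$-$T$-ample $\Rightarrow$ $\shl$ is uniformly $q$-ample'' under the hypothesis $\operatorname{char} k = 0$. One first reduces to the case that $X$ is an integral projective variety: all three conditions only involve vanishing of coherent cohomology, hence are insensitive to nilpotents, and a dévissage along the irreducible components (invoking Chow's lemma to remain projective if needed) lets one treat the components separately.

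The technical engine is Castelnuovo--Mumford regularity taken with respect to the Koszul-ample bundle $\sha$ appearing in the definition of $q$-$T$-ampleness; by Backelin's theorem every ample line bundle has a Koszul-ample power, so nothing is lost in assuming such an $\sha$. Call a coherent sheaf $\shf$ \emph{$q$-regular with respect to $\sha$} if $\HHv{i}{X}{\shf\otimes\sha^{\otimes(-i)}} = 0$ for all $i>q$. Using exactness of the Koszul complex
\[
0 \lra \textstyle\bigwedge^{N}V\otimes\sha^{\otimes(-N)} \lra \cdots \lra V\otimes\sha^{\otimes(-1)} \lra \OO_X \lra 0, \qquad V \deq \HHv{0}{X}{\sha},\quad N \deq \dim_k V,
\]
one shows, just as for ordinary regularity, that $q$-regularity propagates to all larger twists, and a short bookkeeping with exponents identifies $q$-$T$-ampleness of $\shl$ with the statement that the single sheaf $\shl^{\otimes m_1}\otimes\sha^{\otimes(-(n-q))}$ is $q$-regular with respect to $\sha$. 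One further needs the sub-additivity of Koszul regularity under tensor products --- if $\shf$ is $a$-regular and $\shg$ is $b$-regular, then $\shf\otimes\shg$ is $(a+b)$-regular --- and it is precisely here that the strength of the hypothesis $N = 2\dim X$ is used.

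It remains to bootstrap from control of the single power $\shl^{\otimes m_1}$ ``at level $q$'' to the \emph{uniform linear} vanishing $\HHv{i}{X}{\shl^{\otimes m}\otimes\sha^{\otimes -j}} = 0$ for all $i>q$, $j>0$ and $m \geq \la j$. I would argue by induction on $q$: the base case $q=0$ is essentially the classical fact that a line bundle all of whose high twists by a fixed ample bundle satisfy a Castelnuovo--Mumford-type vanishing is ample, combined with Serre vanishing and the regularity arithmetic above. For the inductive step, cut down by a general member $H\in|\sha|$; here Bertini's theorem --- hence characteristic zero --- ensures that $H$ is again integral, that $\shl|_H$ is $(q-1)$-$T$-ample on $H$ (after possibly enlarging $m_1$), and hence, by the inductive hypothesis, uniformly $(q-1)$-ample there. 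Feeding this into the cohomology sequences obtained by twisting $0 \to \sha^{\otimes(-1)} \to \OO_X \to \OO_H \to 0$ and bounding the residual ``$\sha^{-1}$-terms'' by the regularity estimates of the previous paragraph produces the vanishing on $X$ together with an explicit $\la$; this is also the point at which one checks compatibility with the flag/restriction criterion of \cite{DPS}.

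The main obstacle, and the technical heart of the matter, is this last step, and the reason it is delicate is the insistence on a \emph{uniform} bound. A naive attempt to tensor $\shl^{\otimes m_1}$ with itself is doomed: regularity levels add under tensor product, so $(\shl^{\otimes m_1})^{\otimes k}$ would only be $kq$-regular, which is worthless for large $k$. One is forced to keep one tensor factor ``small'' while pushing the growing exponent into genuinely $\sha$-positive --- hence $0$-regular --- factors, and the inductive restriction argument is exactly what makes this reorganisation possible while keeping every auxiliary constant linear in $j$. Carrying this out carefully is the substance of \cite{Totaro}.
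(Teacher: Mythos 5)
Your reduction to the single implication ``$q$-$T$-ample $\Rightarrow$ uniformly $q$-ample'', and your identification of Koszul regularity and its subadditivity under tensor products (Theorem~\ref{thm:Totaro CM-regularity}, proved via the Kawamata--Orlov--Totaro resolution of the diagonal) as a key ingredient, both match the actual structure of Totaro's argument. But the engine you propose for that implication --- induction on $q$ by cutting with a general $H\in|\sha|$ via Bertini --- is not how the proof goes, and it contains a genuine gap: the claim that $\shl|_H$ is $(q-1)$-$T$-ample on a general member of $|\sha|$ is unjustified and false in general. Already on a surface $X$, Theorem~\ref{thm: n-1 ample} says that $1$-ampleness of a divisor $D$ means only that $(D\cdot A)>0$ for \emph{some} ample $A$; if $(D\cdot \sha)<0$ for the fixed Koszul-ample $\sha$ used in the definition of $q$-$T$-ampleness, then $D|_H$ has negative degree on a general curve $H\in|\sha|$ and is certainly not $0$-ample. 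There is no mechanism in the definitions that makes positivity ``improve by one'' upon restriction to a hyperplane section, and no choice of $\sha$ is guaranteed to repair this along the whole induction.

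The place where characteristic zero actually enters is also different from what you suggest: it is not Bertini but reduction modulo $p$. Totaro spreads $(X,\sha,\shl)$ out over a finitely generated $\ZZ$-algebra and applies, on closed fibres, the positive-characteristic vanishing theorem of Arapura--Totaro: if $\shl$ satisfies the finite list of vanishings in the definition of $q$-$T$-ampleness, then $\HHv{i}{X}{\shl^{\otimes p^m}\otimes\shf}=\{0\}$ for all $i>q$ whenever $p^m\geq \reg_\sha(\shf)$. This is where the flatness of Frobenius and the regularity subadditivity are consumed. The uniform linear bound $m\geq\la j$ in characteristic zero is then recovered by letting the prime $p$ and the exponent vary --- prime powers are plentiful enough to fill out a full linear range of twists --- and by semicontinuity to pass from the closed fibres back to the characteristic-zero fibre. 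Your (correct) observation that naively tensoring $\shl^{\otimes m_1}$ with itself destroys the regularity bound is precisely the difficulty that the Frobenius circumvents: pulling back along $F^{m}$ raises $\shl$ to the power $p^m$ without any regularity cost.
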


The proof uses on the one hand methods from positive characteristics to generalize a vanishing result of Arapura \cite[Theorem 5.4]{Arapura}, 
at the same time it relies on earlier work  of Orlov and  Kawamata on resolutions of the diagonal via Koszul-ample line bundles.  

\begin{rmk}[Resolution of the diagonal]
One of the main building blocks of \cite{Totaro} is an explicit resolution of the diagonal as a sheaf on $X\times X$ depending on an ample line bundle $\sha$ on
$X$.  This is used as a tool to  prove an important result   on the regularity of tensor products of sheaves (see Theorem~\ref{thm:Totaro CM-regularity} below, itself an 
improvement over a statement of Arapura \cite[Corollary 1.12]{Arapura}), which in turn is instrumental in showing that $q$-T-ampleness implies uniform $q$-ampleness,
the non-trivial part of Totaro's theorem on the equivalence of the various definitions of partial vanishing.

The resolution in question --- which exists over an arbitrary field --- had first been constructed by Orlov \cite[Proposition A.1]{Orlov} under the assumption that $\sha$ is sufficiently ample, and subsequently
improved by Kawamata \cite{Kaw} by making the more precise assumption that the coordinate ring $R(X,\sha)$ is a Koszul algebra. 

Totaro reproves Kawamata's result under the weaker hypothesis that $\sha$ that the section ring $R(X,\sha)$ is $N$-Koszul for some positive integer $N$ (for the 
most of \cite{Totaro} one will set $N = 2\dim X$).

To construct the Kawamata--Orlov--Totaro resolution of the diagonal, we will proceed as follows. First, we define a sequence of $k$-vector spaces $B_i$ by setting
\[
 B_i \deq \begin{cases}
           k & \text{ if } i=0, \\
           \HH{0}{X}{\sha} & \text{ if } i=1, \\
           \ker \zj{B_{i-1}\otimes\HH{0}{X}{\sha} \lra B_{i-2}\otimes \HH{0}{X}{\sha^2} } & \text{ if } i\geq 2 \ .
          \end{cases}
\]
Note that $\sha$ is $N$-Koszul precisely if the following sequence of graded $R(X,\sha)$-modules cooked up from the $B_i$'s is exact: 
\[
 B_N\otimes R(X,\sha)(-N) \lra \ldots \lra B_1\otimes R(X,\sha)(-1) \lra R(X,\sha) \lra k \lra 0\ .
\]
Next, set 
\[
 \shr_i \deq \begin{cases} 
              \OO_X & \text{ if } i=0\ , \\
              \ker \zj{B_i\otimes \OO_X\lra B_{i-1}\otimes \sha} & \text{ if } i>0\ .
             \end{cases}
\]
Totaro's claim \cite[Theorem 2.1]{Totaro} goes as follows: if $\sha$ is an  $N$-Koszul line bundle on $X$, then there exists an exact sequence 
\[
 \shr_{N-1}\boxtimes \sha^{-N+1} \lra \cdots\lra \shr_1\boxtimes \sha^{-1} \lra \shr_0\boxtimes\OO_X \lra \OO_\Delta\lra 0\ ,
\]
where $\boxtimes$ denotes the external tensor product on $X\times X$, and $\Delta\subset X\times X$ stands for the diagonal. 
\end{rmk}

The above construction  leads to a statement of independent interest about the regularity of tensor products of sheaves, which had already appeared in some form
in \cite{Arapura}. 

\begin{thm}[Totaro; \cite{Totaro}, Theorem~3.4]\label{thm:Totaro CM-regularity}
Let $X$ be a connected and reduced projective scheme of dimension $n$, $\sha$ a $2n$-Koszul line bundle, $\she$ a vector bundle, $\shf$ a coherent sheaf on $X$. Then,
\[
 \reg (\she\otimes\shf) \dleq \reg(\she) + \reg(\shf)\ .
\]
\end{thm}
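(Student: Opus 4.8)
The plan is to prove Theorem~\ref{thm:Totaro CM-regularity} by reducing the bound on $\reg(\she\otimes\shf)$ to the existence of the Kawamata--Orlov--Totaro resolution of the diagonal, which is available here because $\sha$ is $2n$-Koszul. First I would fix the notation: write $d_1 = \reg(\she)$ and $d_2 = \reg(\shf)$ with respect to $\sha$, and recall that $\she\otimes\shf$ can be computed as $p_{2*}\bigl(p_1^*(\she\otimes\shf)\otimes \OO_\Delta\bigr)$, or more usefully that $\she\otimes\shf = p_{2*}\bigl( (\she\boxtimes\shf)\otimes\OO_\Delta \bigr)$ where $p_1,p_2\colon X\times X\to X$ are the two projections and $\Delta\subset X\times X$ is the diagonal. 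The idea is to replace $\OO_\Delta$ by the finite left resolution
\[
 \shr_{n'-1}\boxtimes\sha^{-n'+1} \lra \cdots \lra \shr_1\boxtimes\sha^{-1}\lra \shr_0\boxtimes\OO_X \lra \OO_\Delta \lra 0
\]
with $n' = 2n$, tensor it through by $\she\boxtimes\shf$, and then push forward and run a spectral sequence / hypercohomology argument to estimate the regularity of the result.

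The key steps, in order, would be: (1) tensoring the diagonal resolution with $\she\boxtimes\shf\otimes(\sha\boxtimes\OO_X)^{\otimes t}$ and applying $p_{2*}$, so that the hyperdirect image on $X\times X$ computes (a twist of) $\she\otimes\shf$; concretely, $R^j p_{2*}\bigl( (\she\otimes\sha^{-i+t})\boxtimes\shf\otimes\shr_i \bigr) \cong \HH{j}{X}{\she\otimes\sha^{t-i}}\otimes\shf\otimes\shr_i$ by the projection formula and flat base change, since $\shr_i$ lives on the second factor. (2) Controlling the regularity of the building blocks $\shr_i$: from the defining exact sequences $0\to\shr_i\to B_i\otimes\OO_X\to B_{i-1}\otimes\sha\to\shr_{i-1}\to 0$ (or the two-term kernel sequences) one sees inductively that $\shr_i$ is a vector bundle with $\reg(\shr_i)\le i$ — this is exactly the kind of bookkeeping Totaro isolates, and it is where $N$-Koszulity of $\sha$ feeds in to keep the sequences exact. (3) Feeding the regularity bounds $\reg(\she\otimes\sha^{t-i})\le d_1 - (t-i)$ and $\reg(\shf\otimes\shr_i)\le d_2 + i$ into a degeneration-of-spectral-sequence estimate: the hypercohomology spectral sequence for the resolution converges to $\HH{\bullet}{X}{\she\otimes\shf\otimes\sha^{t}}$, and the vanishing of the relevant off-diagonal cohomology of the terms $\HH{j}{X}{\she\otimes\sha^{t-i}}\otimes\shf\otimes\shr_i$ for $t \ge d_1+d_2 - k$ (in cohomological degree $k\ge 1$) forces $\HH{k}{X}{\she\otimes\shf\otimes\sha^{t}} = 0$, i.e. $\reg(\she\otimes\shf)\le d_1+d_2$. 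The role of $N = 2n$ is precisely that the resolution has length $2n$, which is $\ge n + (\text{cohomological degrees that can contribute})$, so no ``tail'' term spoils the estimate.

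The main obstacle I expect is the bookkeeping in step~(3): one must simultaneously track the cohomological degree $j$ of the direct image $R^j p_{2*}$, the homological position $i$ in the diagonal resolution, and the twist $t$, and check that for every $k\ge 1$ and every $t\ge d_1+d_2$ the contributions $\HH{a}{X}{\she\otimes\sha^{t-i}}\otimes\HH{b}{X}{\shf\otimes\shr_i}$ with $a+b = k$ all vanish, using $\reg(\she)=d_1$, $\reg(\shf)=d_2$, $\reg(\shr_i)\le i$, and the length bound $i\le 2n-1 < 2n$. The subtlety is that $\she\otimes\sha^{t-i}$ can have cohomology in several degrees once $t-i$ is negative, so one genuinely needs the Koszul-ample regularity formalism (Castelnuovo--Mumford regularity with respect to $\sha$ behaves well under twisting down only because $\sha$ is Koszul-ample) to know that these cohomologies vanish in the right range; getting the inequalities to line up is the crux, while the geometric input — the diagonal resolution — is quoted wholesale from the preceding remark. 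A secondary point to be careful about is the reducedness and connectedness hypothesis on $X$, which is exactly what makes $R(X,\sha)$ a ring over a field and hence makes the $N$-Koszul formalism and the diagonal resolution applicable.
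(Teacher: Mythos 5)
The paper does not actually prove this statement --- it quotes it from Totaro with a citation, and the only methodological hint is the preceding remark on the Kawamata--Orlov--Totaro resolution of the diagonal. Measured against Totaro's actual argument, your architecture is the right one: resolve $\OO_\Delta$ by the terms $\shr_i\boxtimes\sha^{-i}$, tensor with a suitable twist of $\she\boxtimes\shf$, push forward, and run the hypercohomology spectral sequence; your observation that the truncation tail is harmless (the resolution has length $2n$ while cohomology on $X\times X$ vanishes above degree $2n$, so the leftmost kernel only contributes in total degree $\leq 0$) is also correct.

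There is, however, a genuine circularity in step (3). The input $\reg(\shf\otimes\shr_i)\leq\reg(\shf)+\reg(\shr_i)$ is itself an instance of the theorem being proved --- subadditivity of regularity under tensoring a coherent sheaf with a vector bundle, applied to $\shr_i$ --- and it does not follow from the bound $\reg(\shr_i)\leq i$ alone. What is actually needed, and what constitutes the real content of Totaro's proof, is a direct inductive vanishing statement for $\HH{b}{X}{\shf\otimes\shr_i\otimes\sha^{l}}$ in the relevant range: one tensors the sequences $0\to\shr_i\to B_i\otimes\OO_X\to\shr_{i-1}\otimes\sha\to 0$ with $\shf$ (exactness is preserved because the $\shr_i$ are locally free kernels of surjections of bundles) and chases the long exact sequences by induction on $i$. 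For $b\geq 2$ this is bookkeeping with the $0$-regularity of $\shf$; for $b=1$ one additionally needs surjectivity of the multiplication-type maps $B_i\otimes\HH{0}{X}{\shf\otimes\sha^{l}}\to\HH{0}{X}{\shr_{i-1}\otimes\shf\otimes\sha^{l+1}}$, which is precisely where the Koszul hypothesis on $\sha$ does work beyond merely guaranteeing that the diagonal resolution exists. So the skeleton of your argument is sound, but the key lemma that makes the spectral sequence degenerate is missing and cannot be replaced by the shortcut you propose.
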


\begin{rmk}
 In the case of $X=\PP^n_\CC$ the above theorem is a simple application of Koszul complexes (see \cite[Proposition 1.8.9]{PAG} for instance). 
\end{rmk}

\begin{rmk}[Positive characteristic methods]
 Another crucial point in the proof of the equivalence of the various definitions of $q$-ampleness is a vanishing result in positive characteristic originating 
 in the work of Arapura in the smooth case \cite[Theorem 5.3]{Arapura} that was extended to possibly singular schemes by Totaro \cite[Theorem 5.1]{Totaro}
 exploiting a flatness property of the Frobenius over arbitrary schemes over fields of prime cardinality. 
 
 The statement is essentially as follows: let $X$ be a connected and reduced projective scheme of dimension $n$ over a field of positive characteristic $p$, 
 $\sha$ a Koszul-ample line bundle on $X$, $q$ a natural number. Let $\shl$ be a line bundle on $X$ satisfying 
 \[
  \HH{q+1}{X}{\shl\otimes\sha^{\otimes (-n-1)}} \equ \HH{q+2}{X}{\shl\otimes\sha^{\otimes (-n-2)}} \equ \ldots \equ \{0\}\ .
 \]
Then, for any coherent sheaf $\shf$ on $X$ one has 
\[
 \HH{i}{X}{\shl^{\otimes p^m}\otimes\shf} \equ \{0\} \ \ \text{for all $i>q$ and $p^m\geq \reg_\sha(\shf)$.}
\]
\end{rmk}

\subsection{Basic properties of $q$-ampleness}
From now on we return to our blanket assumption and work over the complex number field; we will immediately see that the  
equivalence of the various definitions brings all sorts of perks.

First, we point out that $q$-ampleness enjoys many formal properties analogous to ampleness. The following statements have been part of the folklore, for precise proofs we refer the reader to 
\cite[Proposition 2.3]{Ottem} and \cite[Lemma 1.5]{DPS}.

\begin{lem}\label{lem: ample formal}
Let $X$ be a projective scheme, $\shl$ a line bundle on $X$, $q$ a natural number. Then, the following holds.
\begin{enumerate}
 \item $\shl$ is $q$-ample if and only if $\shl|_{X_{\red}}$ is $q$-ample on $X_{\red}$. 
\item $\shl$ is $q$-ample precisely if $\shl|_{X_i}$ is $q$-ample on  $X_i$ for every irreducible component $X_i$ of $X$. 
\item For a finite morphism $f:Y\to X$, if $\shl$ on $X$ is $q$-ample then so is $f^*\shl$. Conversely, if $f$ is surjective as well, then the $q$-ampleness of $f^*\shl$ implies 
the $q$-ampleness of $\shl$. 
\end{enumerate}
\end{lem}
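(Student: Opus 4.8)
The plan is to handle (1) and (2) by d\'evissage and to put the real work into (3). Throughout I will use that a finite morphism $g$ is affine, hence has vanishing higher direct images and satisfies the projection formula, so that $H^i(\cdot\,,(-)\otimes g^*\shm)\cong H^i(\cdot\,,g_*(-)\otimes\shm)$ for any line bundle $\shm$; in particular cohomology and twisted vanishing are unaffected by pushing a coherent sheaf forward along a closed immersion.

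For (1) I would filter a coherent sheaf $\shf$ on $X$ by the powers $\mathcal N^k\shf$ of the nilradical $\mathcal N\subset\OO_X$ (a finite filtration, as $\mathcal N^r=0$); the graded pieces are coherent sheaves on $X_{\red}$, and since twisting by the locally free $\shll{m}$ is exact, the long exact cohomology sequences reduce the vanishing of $H^{i}(X,\shf\otimes\shll{m})$ for $i>q$, $m\gg0$, to the same vanishing on $X_{\red}$, i.e.\ to $q$-ampleness of $\shl|_{X_{\red}}$. The converse, and more generally the statement that the restriction of a $q$-ample line bundle to any closed subscheme is $q$-ample, follows by pushing coherent sheaves forward along the relevant closed immersion. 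For (2), this last fact gives the ``only if'' direction at once (apply it to the components $X_i$); for ``if'' I would reduce to $X$ reduced by (1), then use the usual d\'evissage of a coherent $\shf$ into graded pieces supported on integral closed subschemes $Z_j$, each contained in some component $X_{i(j)}$, and conclude since $\shl|_{X_{i(j)}}$ (hence $\shl|_{Z_j}$) is $q$-ample.

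For (3), the forward direction is immediate from the projection-formula identity above together with coherence of $f_*$. For the converse I would first reduce, via (1) and (2), to $X$ integral; replacing $Y$ by the reduction of an irreducible component dominating $X$ — still finite surjective over $X$, with $f^*\shl$ still $q$-ample by the closed-subscheme remark — I may also assume $Y$ integral. When $X$ is normal I would run the trace argument: in characteristic zero the trace of $\CC(X)\subseteq\CC(Y)$ carries $f_*\OO_Y$ into $\OO_X$ (integral closedness) and gives an $\OO_X$-linear $\tau\colon f_*\OO_Y\to\OO_X$ with $\tau(1)=\deg f$, a nonzero integer and hence invertible; thus $\OO_X$ is an $\OO_X$-module direct summand of $f_*\OO_Y$, so every coherent $\shf$ on $X$ is a direct summand of $f_*\OO_Y\otimes_{\OO_X}\shf\cong f_*f^*\shf$, whence $H^i(X,\shf\otimes\shll{m})$ is a direct summand of $H^i(X,f_*f^*\shf\otimes\shll{m})\cong H^i(Y,f^*\shf\otimes f^*\shll{m})$, which vanishes for $i>q$, $m\gg0$ because $f^*\shl$ is $q$-ample. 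For general integral $X$ I would pass to the normalization $\nu\colon\widetilde X\to X$: after also normalizing $Y$ (which keeps $f^*\shl$ $q$-ample, by the forward direction), the composite $Y\to X$ factors through $\widetilde X$ with $Y\to\widetilde X$ finite surjective, so the normal case yields that $\nu^*\shl$ is $q$-ample on $\widetilde X$; it then remains to descend $q$-ampleness along the finite birational $\nu$, which I would attack using the conductor sequence $0\to\OO_X\to\nu_*\OO_{\widetilde X}\to\mathcal Q\to 0$, with $\mathcal Q$ supported on the lower-dimensional non-normal locus $Z$, together with induction on $\dim X$ (the base case $\dim X\leq q$ being trivial, and the earlier parts giving that $\shl|_Z$ is $q$-ample).

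I expect the genuinely hard part to be exactly this last step, the converse of (3) over a non-normal base. Away from $Z$ the situation is controlled by $\widetilde X$, but over $Z$ the induction supplies only $q$-ampleness of $\shl|_Z$, which forces vanishing of cohomology in degrees $>q$ but not in degree $q$; pinning down $H^{q+1}(X,\shf\otimes\shll{m})$ (respectively $H^{\dim X-q}$ in the dual formulation) against a degree-$q$ contribution coming from $Z$ is the delicate point. This is precisely where $q$-ampleness is harder than ampleness, for which Nakai--Moishezon or Kleiman's criterion turns the descent into a numerical statement that passes along finite surjective morphisms automatically.
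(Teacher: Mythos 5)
Parts (1) and (2), the forward direction of (3), and your trace argument for the converse of (3) over a \emph{normal} base are all correct and standard; note that the paper itself gives no proof of this lemma, referring instead to \cite[Proposition~2.3]{Ottem} and \cite[Lemma~1.5]{DPS} with the remark that the arguments from the ample case go through with minimal modifications. The genuine gap is exactly where you place it: descending $q$-ampleness along the normalization $\nu\colon\widetilde X\to X$. The conductor sequence $0\to\OO_X\to\nu_*\OO_{\widetilde X}\to\mathcal Q\to0$ presents $\OO_X$ as a \emph{subsheaf}, so after twisting, the error term $\mathcal Q$ (supported on the non-normal locus $Z$) enters the long exact sequence through $H^{i-1}$; at $i=q+1$ this is an $H^{q}$ of a sheaf on $Z$, which $q$-ampleness of $\shl|_Z$ cannot kill, and the argument only yields that $\shl$ is $(q+1)$-ample. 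This is a structural problem with the orientation of the comparison map, not a presentational one, so as written the converse of (3) is not proved for non-normal $X$.

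The standard repair --- and what the cited proofs actually do --- is to avoid the normalization altogether and run Noetherian induction on $X$ directly, with the comparison map oriented the other way. For $X$, $Y$ integral and $f$ finite surjective of degree $r$, there exist a coherent sheaf $\shg$ on $Y$ and an $\OO_X$-linear map $f_*\shg\to\OO_X^{\oplus r}$ that is an isomorphism at the generic point (EGA~II, 6.6.2; see also \cite{HS_Ample}). Tensoring with an arbitrary coherent $\shf$, and using that the projection formula $f_*\shg\otimes\shf\cong f_*(\shg\otimes f^*\shf)$ holds for finite $f$ and arbitrary coherent $\shf$, one obtains $a\colon f_*(\shg\otimes f^*\shf)\to\shf^{\oplus r}$ whose kernel $\shk$ and cokernel $\shc$ are supported on proper closed subsets. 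Writing $\shi$ for the image and twisting by $\shll{m}$, the sequence $0\to\shk\to f_*(\shg\otimes f^*\shf)\to\shi\to0$ gives $\HH{i}{X}{\shi\otimes\shll{m}}=\{0\}$ for $i>q$ and $m\gg0$, because the middle term is computed on $Y$ where $f^*\shl$ is $q$-ample and $\shk$ only contributes via $H^{i+1}$, which is covered by the inductive hypothesis; then $0\to\shi\to\shf^{\oplus r}\to\shc\to0$ sandwiches $\HH{i}{X}{\shf^{\oplus r}\otimes\shll{m}}$ between two vanishing groups for $i>q$. Since the error sheaves now appear only in degrees $i$ and $i+1$, never $i-1$, the induction closes; this single argument also subsumes your separate treatment of the normal case and the dévissage in part (2).
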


The respective proofs of the ample case go through with minimal modifications. Another feature surviving in an unchanged form  is the fact that to check (naive) $q$-ampleness we can restrict 
our attention to line bundles. 

\begin{lem}
Let $X$ be a projective scheme, $\shl$ a line bundle,  $\sha$ an arbitrary ample line bundle on $X$. Then, $\shl$ is $q$-ample precisely if there exists a natural number $m_0=m_0(\sha,\shl)$ 
having the property that 
\[
 \HH{i}{X}{\shll{m}\otimes\sha^{\otimes -k}} \equ \{0\}
\]
 for all $i> q$, $k\geq 0$, and $m\geq m_0k$. 
\end{lem}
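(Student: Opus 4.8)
The plan is to deduce this from Totaro's equivalence theorem (Theorem~\ref{thm:Totaro equivalence}), specifically from the equivalence of naive $q$-ampleness and uniform $q$-ampleness, after reducing the given ample line bundle $\sha$ to a Koszul-ample (or merely very ample) one. First I would dispose of the easy direction: if the stated vanishing holds for some $m_0 = m_0(\sha,\shl)$, then in particular, given a coherent sheaf $\shf$, one resolves $\shf$ by finite direct sums of negative powers $\sha^{\otimes -k}$ (twisting a resolution of $\shf$ by sufficiently positive sheaves into the stated shape) and runs the standard hypercohomology/spectral-sequence argument — exactly as in \cite[Proposition 1.2]{DPS}, which was recalled in the remark above — to conclude $\HH{i}{X}{\shf\otimes\shll{m}} = 0$ for $i > q$ and $m \gg 0$. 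Hence $\shl$ is naively $q$-ample.

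For the converse, suppose $\shl$ is $q$-ample. Pick $k_0$ so that $\sha^{\otimes k_0}$ is very ample (indeed, by Backelin's result cited above, even Koszul-ample); write $\shb := \sha^{\otimes k_0}$. By Totaro's theorem $\shl$ is then uniformly $q$-ample with respect to $\shb$: there is $\la > 0$ with $\HH{i}{X}{\shll{m}\otimes\shb^{\otimes -j}} = 0$ for all $i > q$, $j > 0$, $m \geq \la j$. Now given $k \geq 1$, set $j = \lceil k/k_0\rceil$, so $\shb^{\otimes -j} = \sha^{\otimes -k_0 j}$ and $\sha^{\otimes -k}$ differs from it by the tensor factor $\sha^{\otimes (k_0 j - k)}$ with exponent in $\{0,\dots,k_0-1\}$. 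The point is that these finitely many "correction" line bundles $\sha^{\otimes r}$, $0 \le r \le k_0-1$, are each globally generated (being quotients of trivial bundles after a further fixed twist, or simply nef); twisting the uniform-$q$-ample vanishing by a globally generated bundle preserves vanishing of higher cohomology above degree $q$ by the usual Koszul-type argument (resolve the globally generated bundle, or rather $\OO_X$ itself, by a complex built from copies of $\sha^{\otimes -r}$ and a surjection — more carefully, use that $\sha^{\otimes r}$ admits a finite resolution by direct sums of copies of $\shb^{\otimes -s}$ for $s \geq 0$ since $\shb$ is very ample, so the twist can be absorbed). Chasing this through, one gets $\HH{i}{X}{\shll{m}\otimes\sha^{\otimes -k}} = 0$ whenever $m \geq \la\lceil k/k_0\rceil$, and enlarging the constant to $m_0 := \la$ (or $\la/k_0 + 1$, rounding as needed) yields the desired bound $m \geq m_0 k$.

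The main obstacle I anticipate is the bookkeeping in the converse direction: passing between the very ample $\shb = \sha^{\otimes k_0}$ appearing in the uniform condition and the arbitrary ample $\sha$ appearing in the statement, while keeping the linear relation $m \geq m_0 k$ intact. The cleanest way to handle the "fractional" exponents is to note that for a very ample $\shb$ every line bundle $\sha^{\otimes r}$ fits into an exact Koszul-type complex $0 \to \shb^{\otimes -N} \to \dots \to \shb^{\otimes -1}\otimes(\text{free}) \to \sha^{\otimes r}\otimes(\text{free}) \to 0$ after a fixed positive twist, so that a single uniform estimate for $\shb$ controls all twists by $\sha$; the constants change only by a fixed multiplicative factor. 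Once this absorption lemma is set up, both implications are routine spectral-sequence arguments, and the statement follows.
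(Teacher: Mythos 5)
Your proposal is correct, and for the implication from the stated vanishing to naive $q$-ampleness it is exactly the paper's argument (which is the entirety of the paper's one-sentence proof): resolve $\shf$ by finite direct sums of non-positive powers of $\sha$ and dimension-shift, noting that only the first $\dim X$ steps of the (possibly infinite) resolution are needed. For the converse the paper gives no separate argument, whereas you correctly identify that the naive definition alone is insufficient --- applying it to the sheaves $\sha^{\otimes -k}$ one at a time produces constants $m_0(k)$ with no linear control in $k$ --- and you supply the missing input via Totaro's equivalence with uniform $q$-ampleness relative to a Koszul-ample power $\shb=\sha^{\otimes k_0}$, absorbing the residual twists $\sha^{\otimes r}$, $0\le r<k_0$, by resolving them in non-positive powers of $\shb$. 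This is in the spirit of the height estimates the paper develops a few paragraphs later (subadditivity of $\height$ under tensor product combined with Lemma~\ref{lem:DPS}), so it is the intended route, just made explicit. Two small corrections: the resolutions of $\sha^{\otimes r}$ by direct sums of $\shb^{\otimes -s}$ need not be \emph{finite} on a general projective scheme (the paper itself warns of this), so you should truncate at length $\dim X$, which is all the dimension shift uses; and the parenthetical appeal to $\sha^{\otimes r}$ being globally generated or nef is a red herring, since neither property by itself lets you absorb the twist --- your ``more careful'' resolution version is the one that works. Finally, note that the $k=0$, $m=0$ instance of the displayed vanishing must be read as $m\ge m_0$, since as literally written it would force $\HH{i}{X}{\OO_X}=\{0\}$ for $i>q$, which $q$-ampleness does not imply.
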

\begin{proof}
Follows immediately by decreasing induction on $q$ from the fact that every coherent sheaf $\shf$ on $X$ has a possibly infinite resolution by finite direct sums of non-positive powers
 of the ample line bundle $\sha$ (\cite[Example 1.2.21]{PAG}).
\end{proof}

Ample line bundles are good to work with for many reasons, but the fact that they are open both in families and in the N\'eron--Severi space contributes considerably. As it turns out, 
the same properties are valid for $q$-ample line bundles as well. 

\begin{thm}[Totaro; \cite{Totaro}, Theorem 9.1]\label{thm:open in families}
Let $\pi:X\to B$ be a flat projective morphism of schemes (over $\ZZ$) with connected fibres, $\shl$ a line bundle on $X$, $q$ a natural number. Then, the subset of points $b$  of $B$
having the property that $\shl|_{X_b}$ is $q$-ample is Zariski open.
\end{thm}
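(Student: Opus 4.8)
The plan is to reduce $q$-ampleness on each fibre to the vanishing of \emph{finitely many} cohomology groups, and then to spread that vanishing out over the base by semicontinuity. First I would reduce, by a standard spreading-out (limit) argument, to the case where $B$ is affine and of finite type over $\ZZ$, hence Noetherian: the pair $(\pi,\shl)$ descends to a finitely generated $\ZZ$-subalgebra of $\OO_B(B)$, and $q$-ampleness of a restriction to a fibre is insensitive to extensions of the residue field — this is visible from the $q$-$T$-ample description in Definition~\ref{defi:3 partial ampleness} together with flat base change for cohomology — so the $q$-ample locus on the finite-type base pulls back to the one upstairs. Now fix a point $b_0\in B$ with $\shl|_{X_{b_0}}$ $q$-ample; the goal is an open neighbourhood of $b_0$ contained in the $q$-ample locus. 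Shrinking $B$ (connected), I may assume all fibres of $\pi$ have pure dimension $n$ (the Hilbert polynomial with respect to a relatively ample bundle is locally constant); a small additional argument using Lemma~\ref{lem: ample formal}(1) lets me also assume the fibres are geometrically reduced, so that Koszul-ampleness makes sense on each of them.

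Next I would pick a relatively very ample line bundle $\sha$ on $X$ over $B$ and, by Backelin's theorem, replace it by a power so that $\sha|_{X_{b_0}}$ is Koszul-ample. The key point I need is that $\sha|_{X_b}$ then stays Koszul-ample for all $b$ in a neighbourhood of $b_0$. For this I would run the Kawamata--Orlov--Totaro construction of the resolution of the diagonal \emph{in the family}: the graded pieces $\pi_*\sha^{\otimes d}$, the vector spaces $B_i$ and the sheaves $\shr_i$ are built from direct images and from kernels of maps between them, so by generic flatness, after shrinking $B$ they become locally free with base-change-compatible formation, and the bounded complex of vector bundles over $B$ whose exactness encodes $2n$-Koszulness of $\sha|_{X_b}$ is exact at $b=b_0$; since exactness of a bounded complex of vector bundles is an open condition on the base (the ranks of the differentials are lower semicontinuous), this produces the required open set, on which moreover $\sha|_{X_b}$ stays very ample.

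Now I would use that $\shl|_{X_{b_0}}$ is $q$-ample, hence $q$-$T$-ample with respect to $\sha|_{X_{b_0}}$: there is a \emph{single} integer $m_1$ for which the finitely many groups
\[
 \HH{q+i}{X_{b_0}}{(\shl^{\otimes m_1}\otimes\sha^{\otimes -(n+i)})|_{X_{b_0}}} \equ \{0\}, \qquad 1\le i\le n-q,
\]
all vanish. The sheaves $\shg_i\deq\shl^{\otimes m_1}\otimes\sha^{\otimes -(n+i)}$ are line bundles on $X$, flat over $B$, so by upper semicontinuity of fibrewise cohomology (valid over the Noetherian base $B$ with $\pi$ projective) the set $\set{b\in B : \HH{q+i}{X_b}{\shg_i|_{X_b}}\equ\{0\} \fall 1\le i\le n-q}$ is open and contains $b_0$. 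Let $U$ be its intersection with the open locus from the previous step on which $\sha|_{X_b}$ is Koszul-ample.

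Finally, for every $b\in U$ the line bundle $\shl|_{X_b}$ is $q$-$T$-ample with respect to the Koszul-ample bundle $\sha|_{X_b}$ (with the fixed witness $m_1$), and therefore $q$-ample: for fibres of characteristic zero this is exactly Theorem~\ref{thm:Totaro equivalence}, while for fibres of positive characteristic one invokes instead the Arapura--Totaro Frobenius vanishing result recalled above, combined with the regularity estimate of Theorem~\ref{thm:Totaro CM-regularity} and the elementary formal fact that $q$-ampleness of a positive power of a line bundle implies $q$-ampleness of the bundle. This exhibits $U$ as an open neighbourhood of $b_0$ inside the $q$-ample locus, proving the theorem. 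I expect the main obstacle to be the spreading-out of Koszul-ampleness — one must control the Kawamata--Orlov--Totaro resolution uniformly over the base — with the positive-characteristic fibres a secondary difficulty, since there the equivalence of the partial-ampleness notions is not known and one has to work with the Frobenius directly.
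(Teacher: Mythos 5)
Your proposal follows essentially the same route as the paper's (sketched) proof: reformulate $q$-ampleness on the fibre as $q$-$T$-ampleness so that only finitely many cohomology groups must vanish, use that Koszul-ampleness of the auxiliary bundle $\sha|_{X_b}$ is a Zariski-open condition on $b$, and then spread the finitely many vanishings to a neighbourhood by the semicontinuity theorem. The extra material you supply --- spreading out to a Noetherian base, justifying the openness of Koszul-ampleness via the Kawamata--Orlov--Totaro resolution, and invoking the Arapura--Totaro Frobenius argument to identify $q$-$T$-ampleness with $q$-ampleness on positive-characteristic fibres --- fills in precisely the steps the paper's sketch asserts without proof, so the two arguments coincide in substance.
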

\begin{proof}[Sketch of proof] 
This is one point where $q$-$T$-ampleness plays a role, since in that formulation one only needs to check vanishing for a finite number of cohomology groups.

Assume that $\shl|_{X_b}$ is $q$-$T$-ample for a given point $b\in B$; let $U$ be an affine open neighbourhood on $b\in B$, and $\sha$ a line bundle on $\pi^{-1}(U)\subseteq X$, 
whose restriction to $X_b$  is Koszul-ample. Since Koszul-ampleness is a Zariski-open property, $\sha|_{X_{b'}}$ is again Koszul-ample for an open subset of points $b'\in U$; without loss
 of generality we can assume that this holds on the whole of $U$.

We will use the line bundle $\sha|_{X_{b'}}$ to check $q$-$T$-ampleness of $\shl_{X_{b'}}$ in an open neighbourhood on $b\in U$. By the $q$-$T$-ampleness of $\shl|_{X_b}$ there exists a positive integer $m_0$
satisfying
\[
 \HH{q+1}{X_b}{\shll{m_0}\otimes\sha^{\otimes -n-1}} \equ \ldots \equ \HH{n}{X_b}{\shll{m_0}\otimes\sha^{\otimes -2n+q}} \equ \{0\}\ .
\]
It follows from  the semicontinuity theorem  that the same vanishing holds true for points in an open neighbourhood of $b$. 
\end{proof}

In a different direction, Demailly--Peternell--Schneider proved that uniform $q$-ampleness is open in the N\'eron--Severi space. To make this precise we need the fact that uniform $q$-ampleness
is a numerical property; once this is behind us, we can define $q$-ampleness for numerical equivalence classes of $\RR$-divisors. 

\begin{rmk}
Note that a line bundle $\shl$ is $q$-ample if and only if $\shll{m}$ is $q$-ample for some positive integer $m$. Therefore it makes sense to talk about $q$-ampleness of $\QQ$-Cartier divisors; a $\QQ$-divisor $D$ is said to be $q$-ample, if it has a multiple $mD$ that is integral and 
$\OO_X(mD)$ is $q$-ample. 
\end{rmk}

\begin{thm}[$q$-ampleness is a numerical property]\label{thm:q-ample numerical}
Let $D$ and $D'$ be numerically equivalent integral Cartier divisors on an irreducible complex projective variety $X$, $q$ a natural number. Then,
\[
\text{$D$ is  $q$-ample}\ \ \Leftrightarrow \ \ \text{$D'$ is  $q$-ample}.
\]
\end{thm}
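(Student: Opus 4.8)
The plan is to reduce the assertion to a single \emph{uniform} $q$-vanishing estimate over the bounded family of numerically trivial line bundles on $X$. By symmetry it suffices to prove one implication, so assume $D$ is $q$-ample and put $P := D' - D$, a numerically trivial Cartier divisor; we must show that $D' = D+P$ is $q$-ample. The first step is to upgrade $q$-ampleness of $D$ to uniform $q$-ampleness via Totaro's equivalence (Theorem~\ref{thm:Totaro equivalence}): after fixing a very ample divisor $A$, there is a constant $\lambda > 0$ with $\HH{i}{X}{\OO_X(mD - jA)} = 0$ for all $i>q$, all $j\ge 1$ and all $m\ge\lambda j$. It is this uniform formulation that makes the numerical comparison possible.

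Second, I would prove the following uniformity statement. \emph{Claim:} let $\{\shf_t\}_{t\in T}$ be a bounded family of coherent sheaves on $X$, i.e.\ the family of fibres of a $T$-flat coherent sheaf on $X\times T$ with $T$ of finite type. Then there is an integer $m_0$, independent of $t$, such that $\HH{i}{X}{\shf_t\otimes\OO_X(mD)} = 0$ for all $t\in T$, all $i>q$ and all $m\ge m_0$. To see this, recall that the Castelnuovo--Mumford regularities $\reg_A(\shf_t)$ are bounded by a constant $r$ independent of $t$ (standard, via relative Serre vanishing and cohomology-and-base-change). Consequently each $\shf_t$ admits a truncated resolution $0 \to \shf_t^{(n+1)} \to E_n \to \cdots \to E_0 \to \shf_t \to 0$, with $n = \dim X$, in which $E_k$ is a finite direct sum of copies of $\OO_X(-a_k A)$ and $a_k \le r + Ck$ for a constant $C$ depending only on $(X,A)$; one builds it by repeatedly twisting up by the current regularity, applying the evaluation map, and estimating the regularity of the kernel in the resulting short exact sequence. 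Splitting this resolution into short exact sequences and chasing cohomology yields inclusions $\HH{i}{X}{\shf_t\otimes\OO_X(mD)} \hookrightarrow \HH{i+1}{X}{\shf_t^{(1)}\otimes\OO_X(mD)} \hookrightarrow \cdots \hookrightarrow \HH{i+n+1}{X}{\shf_t^{(n+1)}\otimes\OO_X(mD)}$, provided that at each stage the groups $\HH{j}{X}{\OO_X(mD - a_k A)}$ with $j>q$ vanish; by uniform $q$-ampleness this holds as soon as $m\ge\lambda(r+Cn) =: m_0$. The last group vanishes by Grothendieck vanishing because $i+n+1 > n$, which proves the Claim with $m_0$ independent of $t$.

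Third, I would apply the Claim to the right family. The numerically trivial line bundles on $X$ are parametrised by the finite-type scheme $\Pic^\tau(X)$, which over $\CC$ carries a Poincar\'e line bundle $\mathcal P$ on $X\times\Pic^\tau(X)$; hence, for any coherent sheaf $\shg$ on $X$, the sheaves $\shg\otimes\OO_X(mP)$ ($m\ge 0$) occur among the fibres of the $\Pic^\tau(X)$-flat sheaf $p_X^*\shg\otimes\mathcal P$, so they form part of a bounded family. Applying the Claim produces an $m_0$ with $\HH{i}{X}{(\shg\otimes\OO_X(mP))\otimes\OO_X(m'D)} = 0$ for all $i>q$, all $m\ge 0$ and all $m'\ge m_0$; choosing $m' = m \ge m_0$ this reads $\HH{i}{X}{\shg\otimes\OO_X(mD')} = 0$ for all $i>q$ and $m\ge m_0$. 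Since $\shg$ was arbitrary, $D'$ is naively $q$-ample, hence $q$-ample.

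I expect the main obstacle to be the reduction step itself: recognising that numerical invariance is governed by a \emph{uniform} $q$-vanishing over the bounded family $\Pic^\tau(X)$ --- which is precisely why Totaro's equivalence has to be invoked --- and then carrying out the cohomology bookkeeping in the Claim so that all the regularity bounds stay genuinely uniform in $t$. The remaining ingredients (boundedness of $\Pic^\tau(X)$, uniformity of Castelnuovo--Mumford regularity in bounded families, and resolutions by sums of negative twists of $A$) are all standard.
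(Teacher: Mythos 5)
Your proof is correct, but it takes a genuinely different route from the paper's. The paper deliberately stays with \emph{naive} $q$-ampleness: it studies the function $\shn \mapsto \hh{i}{X}{\shl^{\otimes m}\otimes\shn\otimes\shf}$ on the finite-type parameter scheme of numerically trivial line bundles and extracts the required uniform vanishing from the semicontinuity theorem combined with Noetherian induction on that scheme (applying naive $q$-ampleness to the twisted sheaves $\shf\otimes\shn$ component by component). You instead pass to uniform $q$-ampleness via Totaro's equivalence (Theorem~\ref{thm:Totaro equivalence}) and then obtain uniformity over the bounded family from a regularity bound and truncated resolutions by negative powers of $A$ --- in effect re-deriving, in combined form, Lemma~\ref{lem:Arapura}, Lemma~\ref{lem:DPS} and Proposition~\ref{prop:height}, which the paper reserves for the openness statement (Theorem~\ref{thm:q-ample open}); your argument is essentially the specialization of that openness proof to the numerically trivial directions. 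The trade-off: your route is more quantitative and your Claim immediately yields openness as well, but it leans on Totaro's theorem and is therefore confined to characteristic zero, whereas the paper's proof uses only the naive formulation plus boundedness of numerically trivial bundles and hence transfers to positive characteristic via Fujita vanishing, as noted in the remark following the theorem. Two minor points to tidy up: the uniform $q$-ampleness estimate is stated only for twists $-jA$ with $j>0$, so you should either arrange $a_k\geq 1$ in your resolutions or treat the untwisted summands separately by naive $q$-ampleness applied to $\OO_X$; and you do not actually need a Poincar\'e bundle on $X\times\Pic^{\tau}(X)$ --- any bounded family whose fibres exhaust the numerically trivial line bundles suffices, which is exactly what \cite[Theorem 1.4.37]{PAG}, already cited in the paper, provides.
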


Demailly--Peternell--Schneider in \cite[Proposition 1.4]{DPS} only prove this claim for smooth projective varieties. 
 The proof in \cite{DPS} cites the completeness of $\Pic^0(X)$,
 hence it is far from obvious how to extend it. Here we give a proof that is valid under the given more general  hypothesis. Instead of dealing with uniform $q$-ampleness we use the naive 
formulation. 
\begin{proof}
Let $\shn'$ be a numerically trivial line bundle, $\shl$ a $q$-ample line bundle on $X$. This means that for a given coherent sheaf $\shf$, we have 
\[
 \HH{i}{X}{\shll{m}\otimes\shf} \equ \{0\} \ \ \ \text{for $i>q$ and $m_0=m_0(\shl,\shf)$.}
\]
We need to prove that 
\[
 \HH{i}{X}{(\shl\otimes\shn')^{\otimes m}\otimes\shf} \equ \{0\} 
\]
holds for all $i>q$, and for suitable $m\geq m_1(\shl,\shn',\shf)$. To this end, we will study the function
\[
f_i^{(m)}\ :\  \shn\  \mapsto\  \hh{i}{X}{\shll{m}\otimes\shn\otimes\shf}
\]
as a function on the closed points on the subscheme $\mathcal{X}$ of the Picard scheme that parametrizes numerically trivial line bundles on $X$,
which is a scheme of finite type by the boundedness of numerically trivial line bundles  \cite[Theorem 1.4.37]{PAG}. We know that 
\[
 f_i^{(m)}(\OO_X) \equ \hh{i}{X}{\shll{m}\otimes\shf} \equ \{0\}
\]
for $i>q$, and $m\geq m_0$. By the semicontinuity theorem, $f_i^{(m)}$ attains the same value on a dense open subset of $\mathcal X$. 

By applying Noetherian induction and semicontinuity on the irreducible 
components of the complement (on each of which we apply the $q$-ampleness of $\shl$ for coherent sheaves of the shape $\shf\otimes\shn$, 
$\shn$ numerically trivial) we will eventually find a value $m_0'=m_0'(\shl,\shf)$ such that 
\[
 f_i^{(m)} \ \equiv\  0
\]
for all $i>q$ and $m\geq m_0'$.

 But this implies that 
\[
  \HH{i}{X}{(\shl\otimes\shn')^{\otimes m}\otimes\shf} \equ \HH{i}{X}{\shll{m}\otimes ((\shn')^{\otimes m})\otimes\shf} \equ \{0\} 
\]
for $m\geq m_0'$, since the required vanishing holds for an arbitrary numerically trivial divisor in place of $(\shn')^{\otimes m}$. 
\end{proof}

\begin{rmk}
As a result, we are in a position to  extend the definition of $q$-ampleness elements of $N^1(X)_\QQ$: if $\alpha$ is a numerical equivalence class of $\QQ$-divisors, then we will call it
$q$-ample, if one (equivalently: all) of its representatives are $q$-ample. 
\end{rmk}

\begin{rmk}
 Since Fujita's vanishing theorem holds over algebraically closed fields  of  positive characteristic as well 
(see \cite{Fujita} or \cite[Remark 1.4.36]{PAG}), the boundedness of numerically trivial line bundles holds again in this case by
\cite[Proposition 1.4.37]{PAG} and  we can conclude that naive $q$-ampleness is invariant with respect to numerical equivalence in that situation as well. 
\end{rmk}

\begin{defi}[$q$-ampleness for $\RR$-divisors]
An $\RR$-divisor $D$ on a complex projective variety is \emph{$q$-ample}, if 
\[
 D \equ D' + A\ ,
\]
 where $D'$ is a $q$-ample $\QQ$-divisor, $A$ an ample $\RR$-divisor. 
\end{defi}

The result that $q$-ample $\RR$-divisors form an open cone in $N^1(X)_\RR$ was proved in \cite{DPS}. Here we face the same issue as with Theorem~\ref{thm:q-ample numerical}: in the article
\cite{DPS} only the smooth case is considered, and the proof given there does not seem to generalize to general varieties. Here we present a proof of the general case. 

\begin{defi}
Given $\alpha\in N^1(X)_\RR$, we set 
\[
 q(\alpha) \deq \min\st{q\in\NN\,|\, \text{$\alpha$ is $q$-ample}}\ .
\]
\end{defi}

\begin{thm}[Upper-semicontinuity of $q$-ampleness]\label{thm:q-ample open} 
Let $X$ be an irreducible projective variety over the complex numbers. Then, the function
\[
 q\ :\ N^1(X)_\RR \lra \NN
\]
is upper-semicontinuous. In particular, for a given $q\in\NN$, the set of $q$-ample classes forms an open cone. 
\end{thm}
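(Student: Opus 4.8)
The plan is to reduce the statement to an elementary convexity argument in $N^1(X)_\RR$, using as black boxes (i) that naive $q$-ampleness is a numerical property (Theorem~\ref{thm:q-ample numerical}), so that $q$ is a well-defined function on $N^1(X)_\QQ$ and extends to $\QQ$-divisors, and (ii) that the sum of a $q$-ample $\QQ$-divisor and an ample $\RR$-divisor is $q$-ample, which is essentially the definition of $q$-ampleness for $\RR$-divisors together with the fact that ampleness is preserved under small perturbation. The key observation is that upper-semicontinuity of $q\colon N^1(X)_\RR\to\NN$ is equivalent to the assertion that for each fixed $q\in\NN$ the set $U_q\deq\{\alpha\in N^1(X)_\RR \mid q(\alpha)\leq q\}$ is open; and since $U_q$ is clearly a cone (scaling by a positive real preserves $q$-ampleness, as $\shl$ is $q$-ample iff $\shll{m}$ is), it suffices to show $U_q$ is open.

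First I would fix $\alpha\in U_q$, so $\alpha = \beta + a$ with $\beta$ a $q$-ample $\QQ$-class and $a$ an ample $\RR$-class. Choose an ample $\QQ$-class $a_0$ and $\epsilon>0$ with $a - \epsilon a_0$ still ample (possible since the ample cone is open), and rewrite $\alpha = (\beta + \epsilon a_0) + (a - \epsilon a_0)$. Here $\beta+\epsilon a_0$ is a $\QQ$-class which is $q$-ample, being the sum of a $q$-ample $\QQ$-divisor and an ample $\QQ$-divisor — this uses the $\QQ$-divisor version of the "$q$-ample plus ample is $q$-ample" principle, which in turn follows by clearing denominators from Totaro's equivalence and the fact that ampleness absorbs small twists, or directly from the cited results of \cite{DPS,Ottem}. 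Now for any $\gamma\in N^1(X)_\RR$ with $\|\gamma\|$ small, $(a-\epsilon a_0)+\gamma$ is still ample, so $\alpha+\gamma = (\beta+\epsilon a_0) + \bigl((a-\epsilon a_0)+\gamma\bigr)$ exhibits $\alpha+\gamma$ as a $q$-ample $\QQ$-class plus an ample $\RR$-class, hence $\alpha+\gamma\in U_q$. This shows $U_q$ is open, hence $q$ is upper-semicontinuous, and the final sentence of the theorem is the special case that $\{q(\alpha)\leq q\}=\{q(\alpha)<q+1\}$ is open and conical.

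The one genuine point requiring care — the main obstacle — is justifying that a $q$-ample $\QQ$-divisor plus an ample $\QQ$-divisor is again $q$-ample, i.e.\ that the $\QQ$-divisor case of the defining property behaves as expected; this is where one actually needs the equivalence of the three notions of $q$-ampleness (Theorem~\ref{thm:Totaro equivalence}) or the formal properties recorded in Lemma~\ref{lem: ample formal}, rather than just manipulating the definition. Everything else is the bookkeeping of writing an arbitrary nearby class as (rational $q$-ample part) $+$ (real ample part), which only uses openness of the ample cone in $N^1(X)_\RR$ and density of $N^1(X)_\QQ$. A secondary subtlety is that $q(\alpha)$ takes values in the finite set $\{0,1,\dots,\dim X\}$, so upper-semicontinuity is automatically equivalent to openness of each superlevel-complement $U_q$; I would state this reduction explicitly at the start so that the geometric content is isolated cleanly.
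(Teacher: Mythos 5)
There is a genuine gap, and it sits exactly where the real content of the theorem lies. Your argument takes as its starting point that every class $\alpha$ with $q(\alpha)\leq q$ admits a decomposition $\alpha=\beta+a$ with $\beta$ a $q$-ample $\QQ$-class and $a$ an ample $\RR$-class, and then observes that such decompositions persist under small perturbation because the ample cone is open. But for a rational (in particular integral) class $\alpha$, being $q$-ample means satisfying the naive/uniform cohomological condition for an integral multiple, \emph{not} admitting such a decomposition; producing the decomposition requires showing that $\alpha-a$ is still $q$-ample for some small ample $\QQ$-class $a$, i.e.\ that one may perturb a $q$-ample integral divisor in an arbitrary direction --- not merely by adding something ample --- without losing $q$-ampleness. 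That is precisely the statement to be proved, so the argument is circular at the rational points of $N^1(X)_\RR$. Note also that the step you flag as the delicate one (``$q$-ample plus ample is $q$-ample'') is in fact the easy direction and can even be bypassed, since $\alpha+\gamma=\beta+(a+\gamma)$ already exhibits the required decomposition; the hard direction is perturbing by small classes that are not ample.

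The paper's proof supplies exactly this missing ingredient. It fixes an integral uniformly $q$-ample divisor $D$ and a nearby $\QQ$-class $D'\equiv D+\sum_i\la_iB_i$, and controls $\HH{i}{X}{mkD'-pA}$ by applying the Demailly--Peternell--Schneider estimate (Lemma~\ref{lem:DPS}) to the twisting sheaf $\OO_X(\sum_i mk\la_iB_i+mN-pA)$. The key quantitative input is that the height of this sheaf is bounded by $\sum_i mk|\la_i|\cdot\max\st{\height_A(B_i),\height_A(-B_i)}+M+p$, where $M$ is the uniform bound on heights of numerically trivial line bundles from Proposition~\ref{prop:height}; when the $\la_i$ are small this growth is slow enough to be absorbed by the uniform $q$-ampleness constant of $D$, which is what makes $D'$ uniformly $q$-ample. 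Your convexity bookkeeping is fine as the final assembly step, but without this quantitative perturbation argument (or an alternative such as Totaro's via $q$-$T$-ampleness and semicontinuity of finitely many cohomology groups) the theorem is not proved.
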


In order to be able to prove this result, we need some auxiliary statements. To this end, Demailly--Peternell--Schneider introduce the concept of height of coherent sheaves with respect to
a given ample divisor. Roughly speaking the height of a coherent sheaf tells us, what multiples of the given ample divisor we need to obtain a linear resolution.

\begin{defi}[Height]\label{defi:height}
Let $X$ be an irreducible projective variety, $\shf$ a coherent sheaf, $\sha$ an ample line bundle. Consider the set $\shr$ of all resolutions  
\[
 \ldots\to\bigoplus_{1\leq l\leq m_k}\sha^{\otimes - d_{k,l}} \to\ldots\to \bigoplus_{1\leq l\leq m_0}\sha^{\otimes -d_{0,l}} \to\shf\to 0\ 
\]
of $\shf$ by non-positive powers of $\sha$ (that is, $d_{k,l}\geq 0$).
Then,
\[
 \height_\sha(\shf) \deq \min_{\shr} \max_{0\leq k\leq \dim X,1\leq l\leq m_k} d_{k,l}\ .
\]
\end{defi}

\begin{rmk}
On could define the height by looking at resolutions without truncating, that is, by
\[
 \widetilde{\height}_\sha(\shf) \deq \min_{\shr} \max_{0\leq k,\, 1\leq l\leq m_k} d_{k,l}\ .
\]

On a general projective variety  there might be sheaves that do not possess finite locally free resolutions at all, and it can happen that the  height of a sheaf is infinite if we do
not truncate resolutions.  
\end{rmk}


A result of Arapura \cite[Corollary 3.2]{AraFrob} gives effective estimates on the height of a coherent sheaf in terms of its Castelnuovo--Mumford regularity.

\begin{lem}\label{lem:Arapura}
Let $X$ be an irreducible projective variety, $\sha$ an ample and globally generated line bundle, $\shf$ a coherent sheaf on $X$. Given a natural number $k$, there exist
vector spaces $V_i$ for $1\leq i\leq k$ and a resolution
\[
 V_k\otimes\sha^{\otimes -r_\shf-kr_X} \to\ldots\to V_1\otimes\sha^{\otimes -r_\shf-r_X} \to V_0\otimes\sha^{\otimes -r_\shf} \to\shf\to 0\ ,
\]
where 
\[
 r_\shf \deq \reg_\sha(\shf)\ \ \ \text{and}\ \ \ r_X \deq \max\st{1,\reg_\sha(\OO_X)}\ .
\]
\end{lem}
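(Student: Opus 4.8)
The plan is to construct the complex one step at a time, keeping track of the Castelnuovo--Mumford regularity (with respect to $\sha$) of the successive kernels and using throughout the standard package of facts recorded in \cite[Section~1.8]{PAG}, valid because $\sha$ is ample and globally generated: if a coherent sheaf $\shg$ on $X$ is $s$-regular, then (i) it is $s'$-regular for every $s'\geq s$; (ii) $\shg\otimes\sha^{\otimes s}$ is globally generated; and (iii) the maps $\HH{0}{X}{\shg\otimes\sha^{\otimes s}}\otimes\HH{0}{X}{\sha^{\otimes t}}\to\HH{0}{X}{\shg\otimes\sha^{\otimes(s+t)}}$ are surjective for every $t\geq 0$.

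The heart of the argument is the following claim, which I would isolate first: \emph{if $\shg$ is $s$-regular, then the kernel $\shg'$ of the evaluation map $\HH{0}{X}{\shg\otimes\sha^{\otimes s}}\otimes\sha^{\otimes -s}\lra\shg$ (surjective by (ii)) is $(s+r_X)$-regular.} To prove it, set $W\deq\HH{0}{X}{\shg\otimes\sha^{\otimes s}}$ and $m\deq s+r_X$, and take the long exact cohomology sequence of $0\to\shg'\to W\otimes\sha^{\otimes -s}\to\shg\to 0$ twisted by $\sha^{\otimes(m-i)}$. For $i\geq 2$, the group $\HH{i}{X}{\shg'\otimes\sha^{\otimes(m-i)}}$ fits into an exact sequence $\HH{i-1}{X}{\shg\otimes\sha^{\otimes(m-i)}}\to\HH{i}{X}{\shg'\otimes\sha^{\otimes(m-i)}}\to W\otimes\HH{i}{X}{\sha^{\otimes(m-i-s)}}$; the left term vanishes by $s$-regularity of $\shg$ (one has $m-i\geq s-(i-1)$, which is exactly where $r_X\geq 1$ is used), and the right term vanishes by $r_X$-regularity of $\OO_X$ (one has $m-i-s=r_X-i$). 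For $i=1$, the relevant stretch of the sequence is $W\otimes\HH{0}{X}{\sha^{\otimes(r_X-1)}}\to\HH{0}{X}{\shg\otimes\sha^{\otimes(m-1)}}\to\HH{1}{X}{\shg'\otimes\sha^{\otimes(m-1)}}\to W\otimes\HH{1}{X}{\sha^{\otimes(r_X-1)}}$: the rightmost term vanishes by $r_X$-regularity of $\OO_X$, and the leftmost map is surjective by (iii) with $t=r_X-1\geq 0$, so the middle group vanishes. Hence $\shg'$ is $(s+r_X)$-regular.

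Granting the claim, I would finish by iteration. Put $\shf_0\deq\shf$, which is $r_\shf$-regular by the definition of $r_\shf$; inductively, given an $(r_\shf+jr_X)$-regular sheaf $\shf_j$, set $V_j\deq\HH{0}{X}{\shf_j\otimes\sha^{\otimes(r_\shf+jr_X)}}$ and let $\shf_{j+1}$ be the kernel of the evaluation map $V_j\otimes\sha^{\otimes(-r_\shf-jr_X)}\twoheadrightarrow\shf_j$, which is $(r_\shf+(j+1)r_X)$-regular by the claim; this produces the vector spaces $V_0,\dots,V_k$. Splicing --- i.e.\ taking the last map to be the evaluation $V_0\otimes\sha^{\otimes -r_\shf}\twoheadrightarrow\shf$ and, for $1\leq j\leq k$, the $j$-th map to be the composite $V_j\otimes\sha^{\otimes(-r_\shf-jr_X)}\twoheadrightarrow\shf_j\hookrightarrow V_{j-1}\otimes\sha^{\otimes(-r_\shf-(j-1)r_X)}$ --- gives exactly the displayed complex; it is exact at $\shf$ and at each $V_j\otimes\sha^{\otimes(-r_\shf-jr_X)}$ with $j<k$, since there the image of the incoming map is $\shf_{j+1}$, which is the kernel of the outgoing map. (Nothing is claimed at the leftmost term; this is the customary meaning of a resolution truncated at step $k$, which is all that is needed, e.g.\ for bounding $\height_\sha(\shf)$.)

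I do not foresee a genuine obstacle: the argument is bookkeeping on top of Mumford's regularity lemma. The one point requiring care is the bound in the claim, hence the choice $r_X=\max\{1,\reg_\sha(\OO_X)\}$: the ``$\max\{1,\cdot\}$'' is forced by the case $i\geq 2$, where the regularity can improve only by twisting with at least one copy of $\sha$; one should also note, though it is routine, that $\sha$ being merely ample and globally generated is enough for (i)--(iii).
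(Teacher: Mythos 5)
Your proof is correct and follows essentially the same route as the paper: iterate the kernel of the evaluation map and track Castelnuovo--Mumford regularity, gaining at most $r_X$ at each step. The only difference is cosmetic --- the paper first normalises to $r_\shf=0$ and cites Arapura's Lemma~3.1 for the key regularity estimate, whereas you keep the twists explicit and prove that estimate directly (correctly, including the point where $r_X\geq 1$ is needed).
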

\begin{proof}
Without loss of generality we can assume that $\shf$ is $0$-regular by replacing $\shf$ by $\shf\otimes\sha^{r_\shf}$, thus we can assume $r_\shf=0$. Consequently,
$\shf$ is globally generated by Mumford's theorem; set $V_0 \deq \HH{0}{X}{\shf}$, and 
\[
 \shk_0 \deq \shf\otimes\sha^{\otimes - r_X}\ , \ \shk_1 \deq \ker \zj{ V_0\otimes\OO_X\twoheadrightarrow \shk_0\otimes\sha^{\otimes r_X}}\ .
\]
A quick cohomology computation \cite[Lemma 3.1]{Arapura} shows that $\shk_1$ is $r_X$-regular, hence we can repeat the above process for $\shk_1$ in place of 
$\shf$. This leads to a sequence of vector spaces $V_i$, and sheaves $\shk_i$ which fit into the exact sequences
\[
 0 \lra \shk_{i+1} \lra V_i\otimes \OO_X \lra \shk_i\otimes \sha^{\otimes r_X} \lra 0\ ,
\]
or, equivalently, 
\[
  0 \lra \shk_{i+1}\otimes \sha^{\otimes -ir_X} \lra V_i\otimes \sha^{\otimes -ir_X} \lra \shk_i\otimes \sha^{\otimes (1-i)r_X} \lra 0\ .
\]
We obtain the statement of the Lemma by combining these sequences into the required resolution. 
\end{proof}

\begin{cor}
 With notation as above, the height of an $r_\shf$-regular coherent sheaf $\shf$ is
\[
 \height_\sha (\shf) \dleq r_\shf + r_X\cdot\dim X\ .
\]
\end{cor}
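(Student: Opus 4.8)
The plan is to read this bound off directly from Lemma~\ref{lem:Arapura}. Recall that, by Definition~\ref{defi:height}, $\height_\sha(\shf)$ is obtained by minimising, over all resolutions $\shr$ of $\shf$ by non-positive powers of $\sha$, the largest twist $d_{k,l}$ that occurs among the \emph{first} $\dim X + 1$ terms of $\shr$ (i.e.\ in homological degrees $0 \le k \le \dim X$). So it suffices to exhibit one such resolution in which every $d_{k,l}$ with $0 \le k \le \dim X$ satisfies $d_{k,l} \le r_\shf + r_X\cdot\dim X$.

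Such a resolution is furnished by Lemma~\ref{lem:Arapura}. Indeed, the syzygy construction carried out in its proof, namely iterating $\shk_{i+1} = \ker(V_i \otimes \OO_X \twoheadrightarrow \shk_i \otimes \sha^{\otimes r_X})$, continues indefinitely and produces an (a priori infinite) resolution of $\shf$ whose term in homological degree $k$ is $V_k \otimes \sha^{\otimes -(r_\shf + k r_X)}$; alternatively one may simply apply Lemma~\ref{lem:Arapura} with the choice $k = \dim X$ and, beyond step $\dim X$, splice on any resolution of the remaining syzygy by non-positive powers of $\sha$ (such a resolution exists by \cite[Example 1.2.21]{PAG}). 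In either case, in homological degree $k$ every twist equals $r_\shf + k r_X$, and since $r_X = \max\{1,\reg_\sha(\OO_X)\} \ge 1$ these are non-decreasing in $k$; hence over the range $0 \le k \le \dim X$ they are all at most $r_\shf + r_X\cdot\dim X$. Taking the minimum over all resolutions as in Definition~\ref{defi:height} then yields $\height_\sha(\shf) \le r_\shf + r_X\cdot\dim X$.

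There is no genuine obstacle here: the corollary is pure bookkeeping on top of Lemma~\ref{lem:Arapura}, the only point worth flagging being that Definition~\ref{defi:height} inspects only the first $\dim X + 1$ terms of a resolution. Consequently one does not need any control on the tail of the Arapura resolution — whatever happens in homological degrees exceeding $\dim X$ is irrelevant to the stated bound.
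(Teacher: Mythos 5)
Your argument is correct and is exactly the intended one: the paper states this corollary without proof as an immediate consequence of Lemma~\ref{lem:Arapura}, and your reading — that the Arapura resolution has twist $r_\shf + k r_X$ in homological degree $k$, which is maximised at $k=\dim X$ over the range inspected by Definition~\ref{defi:height} — is precisely the bookkeeping being left to the reader. Your remark that the tail beyond degree $\dim X$ is irrelevant (so one may either iterate the syzygy construction indefinitely or splice on an arbitrary resolution by non-positive powers of $\sha$) is a worthwhile clarification, matching the paper's own remark on why the definition of height truncates resolutions.
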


\begin{prop}[Properties of height]\label{prop:height}
Let $X$ be an irreducible projective variety of dimension $n$, $\sha$ an ample line bundle. Then, the following hold.
\begin{enumerate}
 \item For coherent sheaves $\shf_1$ and $\shf_2$ we have 
\[
 \height_\sha(\shf_1\otimes\shf_2) \dleq \height_\sha(\shf_1) + \height_\sha(\shf_2)\ .
\]
\item There exists a positive constant $M=M(X,\sha)$ having the property that 
\[
 \height_\sha(\shn) \dleq M
\]
for all numerically trivial line bundles $\shn$ on $X$. 
\end{enumerate}
\end{prop}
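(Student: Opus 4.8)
The plan is to deduce both statements from Lemma~\ref{lem:Arapura} together with elementary properties of Castelnuovo--Mumford regularity. First a technical reduction: replacing $\sha$ by a suitable power $\sha^{\otimes k_0}$ does not change either assertion in an essential way --- heights measured with respect to $\sha$ and $\sha^{\otimes k_0}$ differ by at most a factor of $k_0$, and numerical triviality is preserved --- so we may and do assume that $\sha$ is ample and globally generated, which is what Lemma~\ref{lem:Arapura} requires. For part (1), start with regularity resolutions of $\shf_1$ and $\shf_2$ of the form produced by Lemma~\ref{lem:Arapura}, say
\[
 V_n\otimes\sha^{\otimes -r_1-nr_X}\to\cdots\to V_0\otimes\sha^{\otimes -r_1}\to\shf_1\to 0
\]
with $r_1=\reg_\sha(\shf_1)$, and similarly for $\shf_2$ with $r_2=\reg_\sha(\shf_2)$. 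Tensoring the two complexes gives a (double) complex resolving $\shf_1\otimes\shf_2$ by direct sums of the bundles $\sha^{\otimes-(r_1+r_2)-(i+j)r_X}$; its total complex is a resolution of $\shf_1\otimes\shf_2$ by non-positive powers of $\sha$ in which the largest twist appearing in homological degrees $0\le k\le n$ is at most $r_1+r_2+nr_X$. By the Corollary preceding this proposition, $\height_\sha(\shf_1)\ge r_1$ and $\height_\sha(\shf_2)\ge r_2$ are not quite what we want --- we need an \emph{upper} comparison --- so instead I would argue directly: by definition of height there exist resolutions of $\shf_i$ by non-positive powers of $\sha$ in which every twist is $\le \height_\sha(\shf_i)$; tensoring these and passing to the total complex yields a resolution of $\shf_1\otimes\shf_2$ in which every twist in the relevant range is $\le \height_\sha(\shf_1)+\height_\sha(\shf_2)$, which is the claim. (One must check that the total complex of the tensor product of two resolutions by locally free sheaves is again a resolution; this is the standard K\"unneth/acyclic-assembly argument, using that the terms are locally free hence flat.)

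For part (2), the point is that numerically trivial line bundles form a bounded family: the subscheme $\mathcal X$ of $\Pic(X)$ parametrizing them is of finite type by \cite[Theorem 1.4.37]{PAG}. Over a scheme of finite type the Castelnuovo--Mumford regularity $\reg_\sha(\shn)$ of the fibres $\shn$ is bounded above by a constant depending only on $X$ and $\sha$: one embeds the universal line bundle into a suitable relatively globally generated sheaf, or simply invokes that $\reg_\sha$ is upper-semicontinuous and constructible on a bounded parameter space, hence attains finitely many values. Granting a uniform bound $r_0$ on $\reg_\sha(\shn)$, the Corollary after Lemma~\ref{lem:Arapura} gives
\[
 \height_\sha(\shn)\ \le\ r_0+r_X\cdot\dim X\ =:\ M,
\]
a constant independent of $\shn$, which is exactly assertion (2).

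The main obstacle is part (2), specifically pinning down the uniform regularity bound for the family of numerically trivial line bundles cleanly. The subtlety is that $\mathcal X$ need not be reduced or irreducible, and one wants a bound valid on \emph{all} closed points, not just a dense open subset; the safe route is to apply Noetherian induction on $\mathcal X$ combined with the existence of a uniform regularity bound for flat families over each irreducible component (e.g.\ via a Mumford-type bound on the Hilbert polynomials, which are constant in a flat family, or via \cite[Theorem 1.8.9]{PAG}-style estimates), exactly paralleling the Noetherian-induction argument already used in the proof of Theorem~\ref{thm:q-ample numerical}. Part (1) is essentially formal once the total-complex-is-a-resolution point is granted.
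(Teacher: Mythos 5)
Your proposal follows essentially the same route as the paper: part (1) is proved by tensoring two height-optimal resolutions by non-positive powers of $\sha$ and taking the total complex, and part (2) combines the boundedness of numerically trivial line bundles with the upper-semicontinuity of Castelnuovo--Mumford regularity (Lemma~\ref{lem:USC CM}), Noetherian induction on the parameter scheme, and the Corollary of Lemma~\ref{lem:Arapura}, exactly as in the text. The one subtlety you flag beyond the paper --- that exactness of the total complex of a tensor product of resolutions requires controlling the higher $\Tor$ sheaves --- is likewise left implicit in the paper's one-line argument for (1), so your write-up matches the published proof in both substance and level of detail.
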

\begin{proof}
The first statement is an immediate consequence of the fact that the tensor product of appropriate resolutions of $\shf_1$ and $\shf_2$ is a resolution of $\shf_1\otimes\shf_2$ of 
the the required type. 

The second claim is a consequence of the fact that numerically trivial divisors on a projective variety are parametrized by a quasi-projective variety. 
Indeed, it follows by  Lemma~\ref{lem:USC CM} and the Noetherian property of the Zariski topology that there exists a constant $M'$ satisfying 
\[
 \reg_\sha(\shn) \dleq M'
\]
for all numerically trivial line bundles $\shn$. By the Corollary of Lemma~\ref{lem:Arapura},
\[
 \height_\sha(\shn) \dleq M \deq M'+ r_X\cdot \dim X\ ,
\]
 as required.
\end{proof}

\begin{lem}[Upper-semicontinuity of Castelnuovo--Mumford regularity]\label{lem:USC CM}
Let $X$ an irreducible projective variety, $\sha$ an ample and globally generated line bundle on $X$. Given a flat family of line bundles	
$\shl$ on $X$ parametrized by a quasi-projective variety
$T$, the function
\[
 T \ni t \mapsto \reg_\sha(\shl_t) 
\]
is upper-semicontinuous.
\end{lem}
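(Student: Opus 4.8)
The plan is to reduce the assertion to the semicontinuity theorem for cohomology in a flat family, by exploiting Mumford's characterisation of $m$-regularity through the vanishing of finitely many cohomology groups. Recall that, $\sha$ being globally generated, a coherent sheaf $\shf$ on $X$ is \emph{$m$-regular} with respect to $\sha$ when $\HH{i}{X}{\shf\otimes\sha^{\otimes(m-i)}}\equ\{0\}$ for all $i\geq 1$, and that by Mumford's regularity lemma (cf.\ \cite{PAG}) an $m$-regular sheaf is automatically $m'$-regular for every $m'\geq m$. Since $\HH{i}{X}{-}$ vanishes for $i>n\deq\dim X$, it follows that
\[
 \reg_\sha(\shf)\dleq m \quad\Longleftrightarrow\quad \HH{i}{X}{\shf\otimes\sha^{\otimes(m-i)}}\equ\{0\}\ \text{ for }\ 1\leq i\leq n .
\]
Hence it suffices to show that for each integer $m$ the subset $T_m\deq\st{t\in T\,|\,\reg_\sha(\shl_t)\dleq m}$ is Zariski open in $T$, which is exactly the upper-semicontinuity being claimed.

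Next I would pass to the relative setting. We may regard the flat family $\shl$ as a coherent sheaf on $X\times T$, flat over $T$, whose restriction to the fibre over $t$ is $\shl_t$. Fixing $m$ and an index $i$ with $1\leq i\leq n$, the sheaf $\shl\otimes p_X^*\sha^{\otimes(m-i)}$ on $X\times T$ (with $p_X$ the projection to $X$) is again coherent and flat over $T$, with fibre $\shl_t\otimes\sha^{\otimes(m-i)}$ over $t$. Since $X\times T\to T$ is projective and $T$, being quasi-projective, is Noetherian, the semicontinuity theorem shows that $t\mapsto\hh{i}{X}{\shl_t\otimes\sha^{\otimes(m-i)}}$ is upper-semicontinuous; in particular its zero locus $Z_{i,m}$ is open in $T$. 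Combining this over the finitely many relevant indices, the displayed equivalence gives $T_m=\bigcap_{i=1}^{n}Z_{i,m}$, a finite intersection of open sets, hence open.

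There is no substantial obstacle in this argument: the two points to keep in mind are that global generation of $\sha$ is precisely what puts Mumford's lemma --- and with it the reduction to finitely many cohomology groups --- at our disposal, and that the quasi-projectivity (hence Noetherianity) of $T$ is what licenses the invocation of the semicontinuity theorem over $T$. Both are hypotheses of the Lemma, so the proof goes through as sketched.
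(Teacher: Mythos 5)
Your argument is correct and is essentially the paper's own proof, merely written out in full: the paper likewise reduces to the fact that $\reg_\sha(\shl_t)\leq m$ is detected by the vanishing of the finitely many groups $\HH{i}{X}{\shl_t\otimes\sha^{\otimes(m-i)}}$, $1\leq i\leq \dim X$ (via Mumford's lemma, which needs the global generation of $\sha$), and then invokes the semicontinuity theorem for cohomology in the flat family over $T$. Nothing further is needed.
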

\begin{proof}
 Since Castelnuovo--Mumford regularity is checked by the vanishing of finite\-ly many line bundles, the statement follows from the semicontinuity theorem for
cohomology.
\end{proof}

\begin{eg}[Height and regularity on projective spaces]
Here we discuss the relationship between height and regularity with respect to $\OO(1)$ on an $n$-dimensional projective space $\PP$. We claim
that 
\[
 \height (\shf) \equ \reg (\shf) + n
\]
holds for an arbitrary coherent sheaf $\shf$ on $\PP$. 

For the inequality 
\[
 \height (\shf) \dleq \reg (\shf) + n
\]
observe that by  \cite[Proposition 1.8.8]{PAG} there must exist a long exact sequence
\[
 \ldots \lra  \bigoplus \OO_\PP(-\reg(\shf)-1) \lra \bigoplus \OO_\PP(-\reg(\shf)) \lra \shf \lra 0\ ,
\]
hence we are done by the definition of height. 

To see the reverse inequality, let 
\[
 \ldots \lra \shf_2 \lra \shf_1 \lra \shf_0 \lra \shf \lra 0
\]
be a resolution of $\shf$ with
\[
 \shf_k \equ \bigoplus_{i=1}^{k}\OO_\PP(-d_{k,i})\ .
\]
Since $d_{k,i} \leq \height (\shf)$ by definition, we obtain that $\shf_k$ is $\height (\shf) - k$ regular for all $n\geq k\geq 0$, therefore 
\[
 \reg (\shf) \dleq  \height (\shf) - n 
\]
according to  \cite[Example 1.8.7]{PAG}. 
\end{eg}

\begin{rmk}[Height and Serre vanishing with estimates]
The introduction of the height of a coherent sheaf leads to an effective version of Serre's vanishing theorem. 
With the notation of the introduction, one has 
\[
 m_0(\sha,\shf) \dleq m_0(\sha,\sha)+\height_\sha(\shf)\ .
\]
\end{rmk}

\begin{lem}[Demailly--Peternell--Schneider; \cite{DPS}, Proposition 1.2]\label{lem:DPS}
Let $\shl$ be a uniformly $q$-ample line bundle on $X$ with respect to an ample line bundle $\sha$ for a given constant $\la=\la(\sha,\shl)$. 
Given a coherent sheaf $\shf$ on $X$,
\[
 \HH{i}{X}{\shll{m}\otimes\shf} \equ \{0\} 
\]
for all $i>q$ and $m\geq \la\cdot (\height_\sha(\shf)+1)$.
\end{lem}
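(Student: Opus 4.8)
The plan is to take a resolution of $\shf$ by non-positive powers of $\sha$ that realizes the height, and to chop it into short exact sequences, then chase cohomology through them using the uniform $q$-ampleness hypothesis. First I would fix a resolution
\[
 \ldots\to\bigoplus_{1\leq l\leq m_k}\sha^{\otimes -d_{k,l}} \to\ldots\to \bigoplus_{1\leq l\leq m_0}\sha^{\otimes -d_{0,l}} \to\shf\to 0
\]
with all $d_{k,l}\leq \height_\sha(\shf)=:h$, which exists by Definition~\ref{defi:height}; I only need the terms in degrees $0\leq k\leq \dim X =: n$, since beyond that point one may simply let $\shk_{n}$ denote the $n$-th syzygy sheaf and use that a coherent sheaf has no cohomology above degree $n$. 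Write $E_k := \bigoplus_{l}\sha^{\otimes -d_{k,l}}$ and let $\shk_j := \im(E_j\to E_{j-1})$ (with $\shk_0 := \shf$), so that there are short exact sequences $0\to \shk_{j+1}\to E_j\to \shk_j\to 0$ for $0\leq j\leq n-1$, and $\shk_n$ is the kernel of $E_{n-1}\to E_{n-2}$.

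The key computation is the following: for each $j$ and each $i>q$, tensoring the short exact sequence $0\to \shk_{j+1}\otimes\shll{m}\to E_j\otimes\shll{m}\to \shk_j\otimes\shll{m}\to 0$ by $\shll{m}$ and taking the long exact sequence gives
\[
 \HH{i}{X}{E_j\otimes\shll{m}} \to \HH{i}{X}{\shk_j\otimes\shll{m}} \to \HH{i+1}{X}{\shk_{j+1}\otimes\shll{m}}.
\]
Since $E_j$ is a direct sum of copies of $\sha^{\otimes -d_{j,l}}$ with $0\leq d_{j,l}\leq h$, uniform $q$-ampleness (Definition~\ref{defi:3 partial ampleness}(2)) gives $\HH{i}{X}{\shll{m}\otimes\sha^{\otimes -d_{j,l}}}=\{0\}$ for all $i>q$ as soon as $m\geq \la\cdot d_{j,l}$, hence certainly whenever $m\geq \la h$; the case $d_{j,l}=0$ is covered because $\sha^{\otimes 0}=\OO_X$ and $H^i(X,\shll{m})=0$ for $i>q,\ m\geq\la$ (take $j=1$ in the definition, or absorb it into $m\geq\la h$ after noting $h\geq 0$; if $h=0$ then $\shf$ is itself a direct sum of such pieces and there is nothing to do). Therefore, for $m\geq \la h$ we obtain injections $\HH{i}{X}{\shk_j\otimes\shll{m}}\hookrightarrow \HH{i+1}{X}{\shk_{j+1}\otimes\shll{m}}$ for every $i>q$ and $0\leq j\leq n-1$.

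Iterating these injections starting from $j=0$, $i=q+1$, we get
\[
 \HH{q+1}{X}{\shf\otimes\shll{m}} \hookrightarrow \HH{q+2}{X}{\shk_1\otimes\shll{m}} \hookrightarrow \cdots \hookrightarrow \HH{q+1+n}{X}{\shk_n\otimes\shll{m}},
\]
and more generally $\HH{i}{X}{\shf\otimes\shll{m}}$ embeds into $\HH{i+n}{X}{\shk_n\otimes\shll{m}}$ for any $i>q$; but $i+n > n = \dim X$, so the target vanishes by Grothendieck vanishing, whence $\HH{i}{X}{\shf\otimes\shll{m}}=\{0\}$. This holds as soon as $m\geq \la h = \la\cdot\height_\sha(\shf)$; the slightly weaker bound $m\geq \la\cdot(\height_\sha(\shf)+1)$ claimed in the statement is then automatic and gives the harmless slack needed to fold in the $\OO_X$-term cleanly, so I would just carry the $+1$ throughout. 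The main obstacle, such as it is, is purely bookkeeping: making sure the resolution can be truncated at step $n$ without losing control (so that $\shk_n$, though possibly not locally free, still has vanishing cohomology in degrees $>n$) and checking that the numerical bound $m\geq\la d_{j,l}$ is uniform over all terms — both of which are immediate once the height is defined via truncated resolutions as in Definition~\ref{defi:height}.
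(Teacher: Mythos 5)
Your argument is essentially identical to the paper's proof: both take a resolution realizing the height, chop it into short exact sequences, use uniform $q$-ampleness to kill the cohomology of the middle terms $\bigoplus_l \sha^{\otimes -d_{k,l}}$, and iterate to shift the cohomological degree above $\dim X$, where Grothendieck vanishing applies. The only cosmetic differences are that you phrase the degree shift as a chain of injections rather than isomorphisms, and that your aside claiming $m\geq \la\cdot \height_\sha(\shf)$ already suffices glosses over the terms with $d_{k,l}=0$ (the definition of uniform $q$-ampleness only covers twists $\sha^{\otimes -j}$ with $j>0$), which is precisely the slack that the stated bound $\la\cdot(\height_\sha(\shf)+1)$ is there to absorb.
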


\begin{proof}
Let 
\[
 \ldots\to\bigoplus_{1\leq l\leq m_k}\sha^{\otimes - d_{k,l}} \to\ldots\to \bigoplus_{1\leq l\leq m_0}\sha^{\otimes -d_{0,l}} \to\shf\to 0
\]
be a resolution where the value of $\height_\sha(\shf)$ is attained, and write $\shf_k$ for the image sheaf of the $k$\textsuperscript{th} differential 
in the above sequence (note that $\shf_0=\shf$). Chopping up the resolution of $\shf$ into short exact sequences yields
\[
 0 \lra \shf_{k+1} \lra \bigoplus_{1\leq l\leq m_k}\sha^{\otimes - d_{k,l}} \lra \shf_{k} \lra 0
\]
for all $0\leq k\leq \dim X$. By the uniform $q$-ampleness assumption on $\shl$ we obtain that 
\[
 \HH{i}{X}{\shll{m}\otimes \sha^{\otimes - d_{k,l}}} \equ \{0\}\ \ \ \text{for all $i>q$ and $m\geq \la (d_{k,l}+1)$.}
\]
By induction on $k$ we arrive at
\[
 \HH{i}{X}{\shll{m}\otimes \shf} \simeq\ldots\simeq \HH{i+k}{X}{\shll{m}\otimes \shf_k} \simeq \HH{i+k+1}{X}{\shll{m}\otimes\shf_{k+1}}
\]
for all $i>q$ and $m\geq \la (\height_\sha(\shf)+1)$. The statement of the Lemma follows by taking $k=\dim X$. 
\end{proof}

The idea for the following modification of the proof of \cite[Proposition 1.4]{DPS} was suggested to us by Burt Totaro.

\begin{proof}[Proof of Theorem~\ref{thm:q-ample open}]
Fix an integral ample divisor $A$, as well as integral Cartier divisors $B_1,\dots,B_\rho$ whose numerical equivalence classes form a basis of the  rational
N\'eron--Severi space. 
Let $D$ be an integral uniformly $q$-ample divisor (for a constant $\la=\la(D,A)$),   $D'$ a  $\QQ$-Cartier divisor, and write 
\[
 D' \,\equiv\, D + \sum_{i=1}^{\rho}\la_iB_i 
\]
for rational numbers $\la_i$. Let $k$ be a positive integer clearing all denominators, then,
\[
 kD' \equ kD +\sum_{i=1}^{\rho}k\la_iB_i + N
\]
for a numerically trivial (integral) divisor $N$. We want to show that 
\[
 \HH{i}{X}{mkD'-pA} \equ \{0\}
\]
whenever $m\geq \la(D',A)\cdot p$ for a suitable positive constant $\la$. By Lemma~\ref{lem:DPS} applied with
\[
\shl\equ \OO_X(D)\ ,\ \sha\equ\OO_X(A)\ , \text{ and } \shf\equ \OO_X(\sum_{i=1}^{\rho}mk\la_iB_i + mN-pA)\ ,
\]
this will happen whenever 
\[
 m \geq \la(D,A)\cdot \height_A(\sum_{i=1}^{\rho}mk\la_iB_i + mN-pA)\ .
\]
Observe that 
\begin{eqnarray*}
 \height_A(\sum_{i=1}^{\rho}mk\la_iB_i + mN-pA) & = & \sum_{i=1}^{\rho}\height_A(mk\la_iB_i) + \height_A(mN) + \height_A(-pA)  \\
& \leq  & \sum_{i=1}^{\rho} mk|\la_i|\cdot\max\st{\height_A(B_i),\height_A(-B_i)} + M  + p \ ,
\end{eqnarray*}
where $M$ is the constant from Proposition~\ref{prop:height}; note that $\height_A(-pA)=p$ for $p\geq 0$. Therefore, if the $\la_i$'s are close enough to zero so that 
\[
 \la\cdot \sum_{i=1}^{\rho} k|\la_i|\cdot\max\st{\height_A(B_i),\height_A(-B_i)} \ < \ \frac{1}{2}\ ,
\]
then it suffices to require
\[
 m \geq 2\la (M+p)\ ,
\]
and $D'$ will be $q$-ample. This shows the upper-semicontinuity of uniform $q$-ample\-ness. 
\end{proof}

\begin{rmk}
If $D_1$ is a $q$-ample and $D'$ is an $r$-ample divisor, then their sum $D+D'$ can only be guaranteed to be $q+r$-ample; this bound is sharp as shown
in \cite[Sect.~8]{Totaro}.  As a consequence, the cone of $q$-ample $\RR$-divisor classes is not necessarily convex. We denote this cone by $\Amp^q(X)$.

Interestingly enough, if we restrict our attention to semi-ample divisors, then Sommese proves in \cite{Sommese} (see also Corollary~\ref{cor:sum semiample}
below) that the sum of $q$-ample divisors retains this property. 
\end{rmk}

It is an interesting question how to characterize the cone of $q$-ample divisors for a given integer $q$. If $q=0$, then the Cartan--Serre--Grothendieck theorem implies that the $\Amp^0(X)$ equals
the ample cone.

Totaro describes the $(n-1)$-ample cone with the help of duality theory.

\begin{thm}[Totaro; \cite{Totaro}, Theorem 10.1]\label{thm: n-1 ample}
For an irreducible projective variety $X$ we have
\[
 \Amp^{n-1}(X) \equ N^1(X)_\RR \setminus (-\overline{\Eff(X)})\ . 
\]
\end{thm}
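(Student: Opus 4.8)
The statement to prove is that for an irreducible projective variety $X$ of dimension $n$, the cone of $(n-1)$-ample $\RR$-divisor classes equals $N^1(X)_\RR \setminus (-\overline{\Eff(X)})$. The plan is to prove the two inclusions separately, using the numerical nature of $q$-ampleness (Theorem~\ref{thm:q-ample numerical}) and the openness/cone structure (Theorem~\ref{thm:q-ample open}) established above, together with Serre duality.

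For the inclusion $\Amp^{n-1}(X) \subseteq N^1(X)_\RR \setminus (-\overline{\Eff(X)})$, I would argue by contraposition: suppose $\alpha \in -\overline{\Eff(X)}$, so $-\alpha$ is pseudo-effective. First reduce to the case where $\alpha$ is an integral divisor class $D$ with $-D$ effective, say $-D = E \geq 0$ (this reduction uses that $\Amp^{n-1}(X)$ is open and that pseudo-effective classes are limits of effective $\QQ$-classes; openness lets us perturb $\alpha$ slightly while staying outside $\Amp^{n-1}$ if $\alpha$ were on the boundary, so it suffices to rule out the case $-\alpha$ big, hence $-\alpha \equiv$ an effective $\QQ$-divisor). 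Now if $D$ were $(n-1)$-ample, then for the structure sheaf $\shf = \OO_X$ we would get $\HH{n}{X}{\OO_X(mD)} = 0$ for $m \gg 0$. But $\OO_X(mD) = \OO_X(-mE)$ injects into $\OO_X$ (for $m>0$, $mE$ effective), and one studies the induced map on $H^n$. The cleanest route is Serre duality on a resolution of singularities, or directly: passing to a resolution $\pi: X' \to X$ and using that $q$-ampleness pulls back under the birational morphism (Lemma~\ref{lem: ample formal}(3) gives one direction; for the full statement one needs that $\pi^*$ of an $(n-1)$-ample class on a variety with $\dim X' = n$ is still $(n-1)$-ample, which holds because $(n-1)$-ampleness is the weakest condition and top cohomology vanishing is controlled). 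On $X'$ smooth, Serre duality gives $H^n(X', \OO_{X'}(mD')) \cong H^0(X', \omega_{X'} \otimes \OO_{X'}(-mD'))^\vee$; since $-D'$ is pseudo-effective and $\omega_{X'}$ has nonzero sections after twisting appropriately — actually the point is subtler — one should instead use that for $-D'$ pseudoeffective, $\OO_{X'}(-mD')$ has a section growing like a positive power, forcing $h^0(\omega_{X'} \otimes \OO_{X'}(-mD') \otimes (\text{ample})^{-1})$ to be nonzero for infinitely many $m$, contradicting uniform $(n-1)$-ampleness. This is the step I expect to be the main obstacle: making the duality argument work on the possibly singular $X$ requires either dualizing complexes or a careful pullback/pushforward argument, and one must be careful that $-\alpha$ merely pseudo-effective (not big) still obstructs.

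For the reverse inclusion $N^1(X)_\RR \setminus (-\overline{\Eff(X)}) \subseteq \Amp^{n-1}(X)$, suppose $\alpha$ is not in $-\overline{\Eff(X)}$. Then by the separation theorem for closed convex cones, there is a linear functional — concretely, after passing to a suitable curve class or movable class — witnessing that $\alpha$ pairs positively against $\overline{\Eff(X)}^\vee$; more usefully, $\alpha \notin -\overline{\Eff(X)}$ means we can write $\alpha = A - E$ with $A$ ample and $E$ an effective $\QQ$-divisor, OR $\alpha$ lies on the boundary. By openness of $\Amp^{n-1}$ it suffices to treat $\alpha$ in the interior of the complement, so write $\alpha \equiv A + \beta$ where $A$ is ample and $\beta$ is such that $-\beta$ is not pseudoeffective; then one wants $\HH{n}{X}{\shf \otimes \OO_X(m\alpha)} = 0$ for $m \gg 0$. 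The key input is that $H^n$ is the top cohomology, governed by duality: $H^n(X, \shf \otimes \OO_X(m\alpha)) = 0$ should follow once $\OO_X(m\alpha)$ has "no sections in the dual direction", i.e. $\omega_X \otimes \shf^\vee \otimes \OO_X(-m\alpha)$ (suitably interpreted) has vanishing $H^0$ for $m$ large, which holds precisely because $-\alpha$ is not pseudo-effective, so $\OO_X(-m\alpha)$ has no sections for $m \gg 0$ (after twisting down by any fixed ample/coherent sheaf contribution). Here one uses: a class $\gamma$ with $-\gamma$ not pseudo-effective has $h^0(X, \OO_X(m\gamma') ) = 0$ for $m \gg 0$ for every $\gamma'$ in a neighborhood, and tensoring by a fixed coherent sheaf only shifts this boundedly — this is where Serre's vanishing/regularity estimates and the height machinery (Lemma~\ref{lem:DPS}, Proposition~\ref{prop:height}) enter to absorb the fixed sheaf $\shf$ and any fixed twist $\omega_X \otimes \shf^\vee$ into a uniform bound. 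Assembling these, every coherent $\shf$ has $H^n(\shf \otimes \OO_X(m\alpha)) = 0$ for $m$ beyond an explicit bound, so $\alpha$ is naively $(n-1)$-ample. The main technical care needed throughout is handling non-locally-free sheaves and the non-Cohen–Macaulay/singular locus when invoking duality; I would either reduce to $X$ normal via Lemma~\ref{lem: ample formal}, or phrase everything via the dualizing complex $\omega_X^\bullet$ and local duality to control $H^n$.
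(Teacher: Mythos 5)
The paper states Theorem~\ref{thm: n-1 ample} as a quotation of \cite[Theorem~10.1]{Totaro} and gives no proof, so there is no in-paper argument to compare against; your sketch has to be judged on its own. Its overall strategy is the correct one and is in fact Totaro's: for the \emph{top} cohomology one has Grothendieck--Serre duality $\HH{n}{X}{\shg}^\vee\cong\Hom\left(\shg,\omega_X^\circ\right)$ with the dualizing sheaf $\omega_X^\circ$ on any projective scheme (no smoothness or Cohen--Macaulay hypothesis is needed in degree $n$, which is the key simplification you circle around but never state), and pseudo-effectivity of $-\shl$ is detected by sections of $\shl^{\otimes -m}\otimes\sha^{\otimes d}$. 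Your reduction of the forward inclusion to the case where $-\alpha$ is big, via openness of $\Amp^{n-1}(X)$ and density of big classes in $\overline{\Eff(X)}$, is correct and genuinely needed.

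Two steps, however, are gaps rather than omitted details. First, in the forward direction your opening move --- testing naive $(n-1)$-ampleness against $\shf=\OO_X$ --- fails: on the blow-up $X$ of $\PP^2$ at a point with exceptional curve $E$, one has $\HH{2}{X}{\OO_X(-mE)}\cong\HH{0}{X}{\OO_X(K_X+mE)}^\vee=\{0\}$ for every $m\geq 0$ (pair $K_X+mE=-3H+(m+1)E$ with the nef class $H$), even though $E$ is effective. The correct test sheaf is $\omega_X^\circ$ itself: duality gives an inclusion $\HH{0}{X}{\shl^{\otimes -m}}\hookrightarrow\HH{n}{X}{\omega_X^\circ\otimes\shl^{\otimes m}}^\vee$, and the left-hand side is nonzero for infinitely many $m$ once $-\shl$ is big. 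You hedge ("the point is subtler") but never commit to a sheaf that works; also, if you insist on passing to a resolution $\pi\colon X'\to X$, the preservation of $(n-1)$-ampleness under $\pi^*$ is not because it is "the weakest condition" but because $R^q\pi_*$ of a coherent sheaf is supported in dimension $\leq n-q-1$ for $q\geq1$, so only the $E_2^{n,0}$-term of the Leray spectral sequence can contribute to $H^n$. Second, in the reverse direction, after using a resolution by sums of $\sha^{\otimes -d}$ to reduce to the uniform vanishing of $\HH{n}{X}{\shl^{\otimes m}\otimes\sha^{\otimes -d}}$ for $m\geq\lambda d$, the dual group is $\HH{0}{X}{\mathcal{H}om\left(\OO_X,\omega_X^\circ\right)\otimes\shl^{\otimes -m}\otimes\sha^{\otimes d}}$, and the argument survives only because $\omega_X^\circ$ is torsion-free of rank one on the integral scheme $X$: a nonzero section then yields an effective class $-m\shl+(d+c)A$ with $c$ a constant depending only on $\omega_X^\circ$, whence $-\shl+\tfrac{d+c}{m}A\in\overline{\Eff(X)}$, contradicting $-\shl\notin\overline{\Eff(X)}$ for $\lambda$ large. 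Your plan to "absorb the fixed sheaf" via the height machinery is silent on this torsion point, and for a sheaf with torsion the $H^0$-counting simply breaks down (a skyscraper has sections after any twist). With these two repairs --- the test sheaf $\omega_X^\circ$ in one direction, torsion-freeness of $\omega_X^\circ$ in the other --- your outline becomes a complete proof.
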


\begin{cor}[$1$-ampleness on surfaces]
 If $X$ is a surface, then a divisor $D$ on $X$ is $1$-ample if and only if $(D\cdot A)>0$ for some ample divisor $A$ on $X$. 
\end{cor}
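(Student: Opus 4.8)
The plan is to read the corollary off Totaro's description of the $(n-1)$-ample cone (Theorem~\ref{thm: n-1 ample}) combined with the two-dimensional peculiarities of intersection theory. Since $\dim X = 2$, that theorem gives $\Amp^1(X) = N^1(X)_\RR \setminus (-\overline{\Eff(X)})$, so a Cartier divisor $D$ is $1$-ample exactly when $-[D]\notin\overline{\Eff(X)}$. Everything therefore reduces to the elementary claim that for a class $[D]\in N^1(X)_\RR$ on a projective surface,
\[
 -[D]\notin\overline{\Eff(X)} \quad\Longleftrightarrow\quad (D\cdot A) > 0 \text{ for some ample divisor } A.
\]

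To prove this claim I would first invoke the fact that on a surface an effective divisor is the same datum as an effective $1$-cycle, so that $\overline{\Eff(X)} = \overline{NE}(X)$ under the identification $N^1(X)_\RR = N_1(X)_\RR$; by Kleiman's criterion the dual of this cone with respect to the intersection pairing is $\Nef(X)$, and by biduality of closed convex cones in a finite-dimensional space we get $\overline{\Eff(X)} = \Nef(X)^{\vee}$. Next I would unwind the right-hand side of the claim by passing to complements: a class fails ``$(D\cdot A) > 0$ for some ample $A$'' precisely when $(D\cdot A)\leq 0$ for every ample divisor $A$, and since ample classes are dense in $\Nef(X)$ and the intersection pairing is continuous, this is equivalent to $(D\cdot N)\leq 0$ for all nef classes $N$, that is, to $-[D]\in\Nef(X)^{\vee} = \overline{\Eff(X)}$. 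Taking complements once more yields the displayed equivalence, and feeding it into Theorem~\ref{thm: n-1 ample} proves the corollary.

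Since the argument is essentially pure dualization, I do not expect a real obstacle; the only points to treat with care are the identification $\overline{\Eff(X)} = \overline{NE}(X)$, which genuinely uses $\dim X = 2$, and the density step allowing one to test the sign of $(D\cdot -)$ against nef rather than merely ample classes. One should also note in passing that an integral Cartier divisor lies in the cone $\Amp^1(X)$ if and only if it is $1$-ample in the sense of Definition~\ref{defi:3 partial ampleness}; this follows by perturbing $D$ by a small rational multiple of a fixed ample class and invoking the openness of $1$-ampleness (Theorem~\ref{thm:q-ample open}).
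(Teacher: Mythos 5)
Your proposal is correct and follows exactly the route the paper intends: the corollary is stated without proof as an immediate consequence of Theorem~\ref{thm: n-1 ample}, and your dualization argument (identifying $\overline{\Eff(X)}$ with $\overline{NE}(X)$ on a surface and using biduality of closed convex cones together with density of ample classes in the nef cone) is precisely the computation that is being left to the reader. The only cosmetic remark is that the step $\Nef(X)=\overline{NE}(X)^{\vee}$ is just the definition of nefness rather than Kleiman's criterion proper; everything else, including the careful passage between integral $1$-ample divisors and the cone $\Amp^1(X)$ via openness, is in order.
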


\begin{rmk}
The cone of $q$-ample divisors on a $\QQ$-factorial projective toric variety has been shown to be polyhedral 
(more precisely, to be the interior of the union of finitely many rational polyhedral cones)  by Broomhead and Prendergast-Smith \cite[Theorem 3.3]{BrPS}. 
Nevertheless, an explicit combinatorial description in terms of the fan of the underlying toric variety  along the lines  of \cite{HKP} is not yet known. 
\end{rmk}

Totaro links partial positivity to the vanishing of higher asymptotic cohomology. Generalizing the main result of \cite{dFKL} (see also \cite{ACF}
for terminology),  he asks the following question.

\begin{question}[Totaro]
Let $D$ be an $\RR$-divisor class on a complex projective variety, $0\leq q\leq n$ an integer. Assume that $\ha{i}{X}{D'}=0$ for all $i>q$ and all $D'\in N^1(X)_\RR$ in a neighbourhood of 
$D$. Is is true that $D$ is $q$-ample? 
\end{question}

\begin{rmk}
Broomhead and Prendergast-Smith \cite[Theorem 5.1]{BrPS} answered  Totaro's question positively for toric varieties. 
\end{rmk}

It is expected the $q$-ampleness should have more significance in the case of big line bundles. A first move in this direction comes from the following 
Fujita-type vanishing statement (see \cite{Fujita} or \cite[Theorem 1.4.35]{PAG} for Fujita's original statement).

\begin{thm}[\cite{qAmp}, Theorem C]\label{thm:Serre-Fujita for big}
Let $X$ be a complex projective scheme, $L$ a big Cartier divisor, $\shf$ a coherent sheaf on $X$. Then there exists a positive integer $m_0(L,\shf)$ such that 
\[
 \HH{i}{X}{\shf\otimes\OO_X(mL+D)} \equ \{0\}
\]
 for all $i>\dim \Bplus (L)$, $m\geq m_0(L,\shf)$, and all nef divisors $D$ on $X$.
\end{thm}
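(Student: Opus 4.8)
The plan is to split the statement into an ``easy'' part, furnished by the already–available vanishing of \cite[Theorem~A]{qAmp} on general complete intersections, and a ``hard'' part that gains one extra cohomological degree by restricting to a complete intersection on which $L$ actually becomes ample. Throughout write $Z := \Bplus(L)$ and $q := \dim Z$.

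First I would make two harmless reductions: by restriction to the irreducible components together with a dévissage of $\shf$ (cf.\ Lemma~\ref{lem: ample formal}) one reduces to $X$ integral, and then, resolving $\shf$ by locally free sheaves and dévissaging in the bounded range of cohomological degrees, to $\shf$ locally free; neither operation changes $q$, and a nef divisor stays nef. Since $\dim Z = q$, a general complete intersection $Y = E_1 \cap \dots \cap E_{q+1}$ of $q+1$ very ample divisors $E_i \in |A_i|$ is disjoint from $Z$, so that $L|_Y$ is ample, i.e.\ $\Bplus(L|_Y) = \varnothing$; moreover $\Bplus(L|_{E_i}) = Z \cap E_i$ has dimension $q-1$. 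In particular the hypothesis of \cite[Theorem~A]{qAmp} is satisfied by the $q+1$ classes $A_1,\dots,A_{q+1}$, which already yields $\HH{i}{X}{\shf\otimes\OO_X(mL+D)} = 0$ for all $i > q+1$, all $m \gg 0$, and all nef $D$, with a threshold $m_0$ that is uniform in $D$. It therefore only remains to treat the single degree $i = q+1$.

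For this I would exploit the Koszul resolution of $\OO_Y$ attached to the regular sequence $E_1,\dots,E_{q+1}$. Set $\shg := \shf\otimes\OO_X(mL+D)$. Tensoring the Koszul complex with $\shg$ gives a resolution of $\shg|_Y = \shf|_Y\otimes\OO_Y(mL|_Y+D|_Y)$ by the sheaves $\bigoplus_{|S|=r}\shg\bigl(-\sum_{i\in S}E_i\bigr)$, $0\le r\le q+1$; the associated hypercohomology spectral sequence has $\HH{q+1}{X}{\shg}$ as the $r=0$, degree-$(q+1)$ term, abutting to $\HH{q+1}{Y}{\shg|_Y}$ and linked by differentials to the groups $\HH{q+1+r}{X}{\shg(-\sum_{i\in S}E_i)}$ for $1\le r\le q+1$. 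The abutment $\HH{q+1}{Y}{\shg|_Y}$ vanishes for $m\gg0$, uniformly in $D$ — by Fujita's vanishing theorem for the ample bundle $L|_Y$ on $Y$, or trivially by Grothendieck vanishing when $q+1 > \dim Y$. Chasing the spectral sequence, the vanishing of $\HH{q+1}{X}{\shg}$ is thus reduced to that of the twisted-down groups $\HH{q+1+r}{X}{\shg(-\sum_S E_i)}$; those with $r\ge 2$ are compared, via the exact sequences $0\to\shg(-\sum_S E_i)\to\shg\to\shg\otimes\OO_{\sum_S E_i}\to 0$ and the easy part above, to cohomology supported on $\sum_S E_i$, which one handles by induction on $\dim X$ (restricting to the $E_i$, on which $\dim\Bplus$ has dropped to $q-1$); the terms with a single $-E_i$ are treated similarly, with Serre-type duality used to absorb the residual anti-ampleness.

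The main obstacle is precisely this last step. The groups $\HH{\bullet}{X}{\shg(-\sum_S E_i)}$ carry a genuinely negative (anti-ample) twist, so \cite[Theorem~A]{qAmp} — whose conclusion only permits nef and effective-ample twists — does not apply to them directly; one must push the negative twists onto the very ample divisors $E_i$ and invoke the statement inductively on $\dim X$ there (where $mL|_{E_i}-(\text{effective})+D|_{E_i}$ is again ``big with augmented base locus of dimension $q-1$, plus nef''), and/or the dual vanishing statement, all the while keeping every threshold uniform in the nef divisor $D$. Organizing this induction so that it closes — ensuring that one genuinely descends to the base case $\Bplus(L)=\varnothing$ (Fujita vanishing) and that the auxiliary negative twists are controlled at each stage — is the technical heart of the proof; the two reductions at the outset, the production of the complete intersection $Y$, and the Koszul-complex bookkeeping are routine.
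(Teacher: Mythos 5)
This theorem is quoted from \cite[Theorem~C]{qAmp}; the present paper contains no proof of it, so your proposal can only be measured against what a correct argument requires --- and there it has a genuine gap at exactly the point you flag as ``the technical heart''. Your easy part is fine: a general complete intersection $Y=E_1\cap\dots\cap E_{q+1}$ of $q+1$ very ample divisors misses the $q$-dimensional set $\Bplus(L)$, so $L|_Y$ is ample and \cite[Theorem~A]{qAmp} applied to $A_1,\dots,A_{q+1}$ gives the vanishing for all $i\geq q+2$, uniformly in $D$ and for every coherent sheaf. The problem is the single remaining degree $i=q+1$, and the Koszul reduction does not reach it. Writing $K_r=\bigoplus_{|S|=r}\shg(-\sum_{i\in S}E_i)$ and chopping the Koszul resolution of $\shg|_Y$ into short exact sequences, the vanishing of $\HH{q+1}{Y}{\shg|_Y}$ together with that of $\HH{q+k}{X}{K_k}$ for $k\geq 2$ (which \emph{does} follow from the easy part, since $q+k\geq q+2$) yields only that $\HH{q+1}{X}{\shg}$ is a quotient of $K_1$-cohomology in degree $q+1$, namely of $\bigoplus_i\HH{q+1}{X}{\shg(-E_i)}$. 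Note the degrees: the groups that map onto $\HH{q+1}{X}{\shg}$ are $\HH{q+k}{X}{K_k}$, not $\HH{q+1+k}{X}{K_k}$ as in your accounting, and for $k=1$ this sits in degree exactly $q+1$, outside the range of the easy part. Since $\HH{q+1}{X}{\shg(-E_i)}=\HH{q+1}{X}{\shf(-E_i)\otimes\OO_X(mL+D)}$ is precisely an instance of the statement being proved (for the coherent sheaf $\shf(-E_i)$), the reduction is circular. Your diagnosis of the obstruction is also off: Theorem~A of \cite{qAmp} holds for \emph{arbitrary} coherent sheaves, so any fixed negative twist is harmlessly absorbed into $\shf$; the only genuine obstruction is the cohomological degree. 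Finally, your fallback of restricting to $E_i$ and inducting on $\dim X$ shows at best (via $\HH{q}{E_i}{\shg|_{E_i}}=0$) that $\HH{q+1}{X}{\shg(-E_i)}\to\HH{q+1}{X}{\shg}$ is injective, which combined with the surjectivity above yields nothing.

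A reduction that actually closes must trade the critical degree-$(q+1)$ group for something structurally better, not for the same group attached to another arbitrary sheaf. The standard route realizes $\Bplus(L)$ as the zero locus of the base ideal $\mathfrak{b}$ of $|k(pL-A)|$ for suitable $p$ and ample $A$, so that $\mathfrak{b}\otimes\OO_X(k(pL-A))$ is globally generated. For $i>q$ the quotient $\shg/\mathfrak{b}\shg$ is supported in dimension $q$ and contributes nothing, and the global generation replaces $\shg$ by a twist of the form $\shf\otimes\OO_X((m-kp)L+kA+D)$ up to error terms lying one cohomological degree higher; a descending induction on the cohomological degree starting at $\dim X+1$ absorbs those errors, and iterating the trade finitely many times lands on $\shf\otimes\OO_X(rL)$ twisted by a large multiple of $A$ plus a nef divisor, with $r$ bounded --- which Fujita's vanishing theorem kills uniformly in $D$. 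Some device of this kind (gaining positivity at fixed cohomological degree, with Fujita vanishing at the bottom) is what your proposal is missing.
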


\begin{rmk}[Augmented base loci on schemes]
The augmented base locus of a $\QQ$-Cartier divisor  $L$ is defined in \cite{ELMNP}  via
\[
 \Bplus (L) \deq \bigcap_{A} \B(L-A)	
\]
where $A$ runs through all ample $\QQ$-Cartier divisors. As opposed to the stable base locus of a divisor, the augmented base locus is invariant 
with respect to numerical equivalence of divisors. The augmented base locus of a $\QQ$-divisor $L$  is empty precisely if $L$ is ample. 

Although it is customary to define the stable base locus and the augmented base locus of a divisor in the setting of projective varieties, as it is pointed out in
\cite[Section 3]{qAmp}, these notions make perfect sense on more general schemes. 

For an invertible sheaf $\shl$ on an arbitrary scheme $X$, let us denotes by  $\shf_\shl$ the quasi-coherent subsheaf of $\shl$ generated by $\HH{0}{X}{\shl}$. 
then we can set 
\[
\mathfrak{b}(\shl) \deq \ann_{\OO_X} (\shl/\shf_\shl)\ , 
\]
and define $\Bs(\shl)$ to be the closed subscheme corresponding to $\mathfrak{b}(\shl)$; furthermore we define  
\[
\B(\shl) \deq  \bigcap_{m=1}^{\infty} \Bs(\shl^{\otimes m})_{\text{red}} \dsubseteq X
\]
as a closed subset of the topological space associated to $X$. All basic properties of the stable base locus are retained (see again \cite[Section 3]{qAmp}), in particular,
if $X$ is complete and algebraic over $\CC$ (by which we mean  separated, and of finite type over $\CC$), then we recover the original definition of stable base loci. 

Assuming $X$ to be projective and algebraic over $\CC$, we define the augmented base locus of a $\QQ$-Cartier divisor  $L$ via
\[
 \Bplus (L) \deq \bigcap_{A} \B(L-A)	
\]
where $A$ runs through all ample $\QQ$-Cartier divisors. Again, basic properties are preserved, and in the case of projective varieties we recover the original
definition.
\end{rmk}

An interesting further step in this direction is provided 
by Brown's work, where he connects $q$-ampleness of a big line bundle to its behaviour when restricted to  its augmented base locus. 

\begin{thm}[Brown; \cite{Brown}, Theorem 1.1]
Let $\shl$ be a big line bundle on a complex projective scheme $X$, denote by $\Bplus (\shl)$ the augmented base locus of $\shl$. For a given integer $0\leq q\leq n$, $\shl$ is $q$-ample
if and only if $\shl|_{\Bplus(\shl)}$ is $q$-ample.    
\end{thm}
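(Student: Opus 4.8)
The plan is to prove the two implications separately: one is soft, the other carries all the content. First I would record the soft direction. If $\shl$ is $q$-ample on $X$ and $i\colon Z\hookrightarrow X$ is any closed subscheme, then for a coherent sheaf $\shf$ on $Z$ the projection formula together with the exactness of $i_*$ gives $\HH{j}{Z}{\shf\otimes(\shl|_Z)^{\otimes m}}\iso\HH{j}{X}{i_*\shf\otimes\shll{m}}$, and the right-hand side vanishes for $j>q$ once $m\gg 0$. Applying this with $Z=\Bplus(\shl)$ (reduced structure) settles the ``only if'' part; so from now on I assume $\shl|_B$ is $q$-ample, where $B\deq\Bplus(\shl)$, and aim to prove $\shl$ is $q$-ample.

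\textbf{Reduction.} Since $q$-ampleness of $\shl$, $q$-ampleness of $\shl|_B$, and the locus $\Bplus$ are all unchanged when $\shl$ is replaced by a positive power, and since $\shl$ is big, I would first reduce to the situation $\shl\equ\OO_X(A+E)$, where $A$ is a Koszul-ample Cartier divisor (Backelin's theorem lets one pass to a Koszul-ample multiple of an ample divisor) and $E\dgeq 0$ is an effective Cartier divisor with $\Supp E=B$ --- this is the standard description of the augmented base locus of a big divisor. The scheme-theoretic subtleties ($X$ possibly reducible or non-reduced) are absorbed into Lemma~\ref{lem: ample formal} and the notion of $\Bplus$ on schemes recalled in the Introduction. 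By Definition~\ref{defi:3 partial ampleness} and Totaro's equivalence (Theorem~\ref{thm:Totaro equivalence}) it then suffices to produce a single $m_1$ with $\HH{j}{X}{\shll{m_1}\otimes\OO_X(-(n+j-q)A)}\equ\{0\}$ for all $q<j\leq n$ --- only finitely many groups. Writing the $j$-th group as $\HH{j}{X}{\OO_X(lA+m_1E)}$ with $l=m_1-(n+j-q)$, I note that $m_1-l$ is bounded by $2n$ while $l\to\infty$ with $m_1$. Tensoring $0\to\OO_X\to\OO_X(m_1E)\to\OO_{m_1E}(m_1E)\to 0$ with the invertible sheaf $\OO_X(lA)$ and applying Serre vanishing for the ample divisor $A$, I reduce to showing $\HH{j}{m_1E}{\OO_{m_1E}(lA+m_1E)}\equ\{0\}$ for $j>q$ and $l\gg 0$.

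\textbf{Vanishing on the thickened base locus.} The subscheme $m_1E$ carries a filtration with graded pieces $\OO_E(-tE)$ for $0\leq t<m_1$, so it is enough to prove $\HH{j}{E}{\OO_E(lA+i'E)}\equ\{0\}$ for $j>q$, all $i'\in\{1,\dots,m_1\}$, and $l$ large. If $i'\leq l$ then $\OO_E(lA+i'E)\iso(\shl|_E)^{\otimes i'}\otimes(A|_E)^{\otimes(l-i')}$; since $\shl|_E$ is $q$-ample --- its restriction to $E_{\red}=B$ is, by Lemma~\ref{lem: ample formal} --- it is uniformly $q$-ample with respect to $A|_E$ for some constant $\la$ (Theorem~\ref{thm:Totaro equivalence}), and Lemma~\ref{lem:DPS} kills the cohomology once $i'\geq\la(h+1)$, where $h$ bounds $\height_{A|_E}\bigl((A|_E)^{\otimes s}\bigr)$ uniformly in $s\geq 0$ --- high powers of an ample bundle are $0$-regular, so by the corollary to Lemma~\ref{lem:Arapura} and Proposition~\ref{prop:height} only finitely many small $s$ remain. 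The finitely many $i'<\la(h+1)$ are handled by ordinary Serre vanishing on $E$, and the two ranges of $i'$ overlap once $l$ is large. If $i'>l$ then $\OO_E(lA+i'E)\iso(\shl|_E)^{\otimes l}\otimes\OO_E(E)^{\otimes(i'-l)}$ with $0\leq i'-l\leq m_1-l\leq 2n$; by Proposition~\ref{prop:height} the $A|_E$-height of $\OO_E(E)^{\otimes(i'-l)}$ is at most $2n\cdot\height_{A|_E}(\OO_E(E))$, a fixed constant, so Lemma~\ref{lem:DPS} again gives the vanishing for $l$ large. Propagating these vanishings through the filtration and then through the short exact sequence yields the finitely many vanishings needed above, hence $\shl$ is $q$-$T$-ample, hence $q$-ample.

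\textbf{The main obstacle.} The crux is uniformity: the graded pieces $\OO_E(lA+i'E)$ form a genuinely two-parameter family of sheaves on $B$, and plain Serre vanishing is powerless for those in which both exponents grow. Overcoming this is exactly what forces the argument to invoke Totaro's equivalence (to upgrade mere $q$-ampleness of $\shl|_B$ to \emph{uniform} $q$-ampleness), the Demailly--Peternell--Schneider/Arapura height estimates packaged in Lemma~\ref{lem:DPS} and Proposition~\ref{prop:height}, and the $q$-$T$-ampleness reformulation --- the last keeps $m_1-l$ bounded so that the case $i'>l$ needs no delicate proportionality choice. A secondary, purely bookkeeping point is that $E$ (or $X$) may be reducible or non-reduced, dealt with by passing to $E_{\red}$ and its components via Lemma~\ref{lem: ample formal}; the one genuinely external ingredient is the structural statement that a big line bundle admits a decomposition $\shl\equ\OO_X(A+E)$ with $A$ ample and $\Supp E=\Bplus(\shl)$.
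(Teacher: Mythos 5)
Your easy direction and the cohomological core of your hard direction are sound, but the reduction step rests on a false structural claim: a big line bundle $\shl$ need \emph{not} admit a decomposition $\shl^{\otimes k}\cong\OO_X(A+E)$ with $A$ ample and $E$ an effective Cartier divisor whose support equals $\Bplus(\shl)$. The support of a nonzero effective Cartier divisor on a variety has pure codimension one, whereas $\Bplus(\shl)$ can have components of codimension at least two: if $f\colon X\to Y$ is a small resolution of a threefold contracting a single curve $C$ and $\shl=f^*\sha$ with $\sha$ ample, then $\shl$ is big and nef and $\Bplus(\shl)=C$, since $C$ lies in the base locus of every $|m(\shl-A)|$ while $\Bplus(\shl)\subseteq\Exc(f)$. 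What is true is only that $\Bplus(\shl)=\bigcap_{\shl\equiv A+E}\Supp E$, and this intersection is in general not realized by a single decomposition. Consequently your argument, as written, proves that $\shl$ is $q$-ample under the stronger hypothesis that $\shl|_{\Supp E}$ is $q$-ample for one such $E$; the theorem's hypothesis concerns the possibly much smaller set $\Bplus(\shl)$, and $q$-ampleness does not propagate from a closed subscheme to a larger one.

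The paper avoids this by quoting Brown's restriction theorem (\cite[Theorem~2.1]{Brown}: if $\shl$ is not $q$-ample and $s$ is a nonzero section of some $\shl'$ with $\shll{a}\otimes\shl'^{\otimes -b}$ ample, then $\shl|_{Z(s)}$ is not $q$-ample), which Brown applies iteratively to sections $s_1,\dots,s_r$ of $\shl'=\shll{a}\otimes\sha^{-1}$ whose common zero locus is $\Bplus(\shl)$, descending one hypersurface at a time from $X$ to $\Bplus(\shl)$. Your computation on the thickening $m_1E$ --- the $q$-$T$-ampleness reformulation, the filtration with graded pieces $\OO_E(lA+i'E)$, and the uniform $q$-ampleness and height estimates --- is essentially a correct proof of the single-divisor case, that is, of the contrapositive of Brown's Theorem~2.1 with $\shl'=\OO_X(E)$. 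To complete the argument you would need to run this step $r$ times along the chain $X\supset Z(s_1)\supset Z(s_1)\cap Z(s_2)\supset\cdots\supset\Bplus(\shl)$, verifying at each stage that the intermediate scheme still satisfies the hypotheses under which your cohomological computation is valid.
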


We give a very rough outline of the proof of Brown's result. First, if $\shl$ is $q$-ample on $X$, and $Y\subseteq X$ denotes $\Bplus(\shl)$ with the reduced 
induced scheme structure, then the projection formula and the preservation of cohomology groups under push-forward by closed immersions imply that 
$\shl|_{\Bplus(\shl)}$
is $q$-ample as well. 

The other implication comes from the following  useful observation from \cite{Brown}, a  restriction theorem for line bundles that are not $q$-ample 
\cite[Theorem 2.1]{Brown}: let $\shl$ be a line bundle on a reduced projective scheme $X$, which is not $q$-ample, and let $\shl'$ be a line bundle on $X$ with a 
nonzero section $s$ having the property that  $\shll{a}\otimes\shl'^{\otimes -b}$ is ample for some positive integers $a,b$. Then, $\shl|_{Z(s)}$ is not $q$-ample.

\subsection{Sommese's geometric $q$-ampleness}

In this section, we will discuss Sommese's geometric notion of  $q$-ampleness, and relate it  to the more cohomologically oriented discussion in the previous sections. 

\begin{defi}[Sommese; \cite{Sommese}, Definition~1.3]\label{defi:Sommese geom}
 Let $X$ be a projective variety, $\shl$ a line bundle on $X$. We say that $\shl$ is \emph{geometrically $q$-ample} for a natural number $q$, if 
 \begin{enumerate}
  \item $\shl$ is semi-ample, i.e., $\shll{m}$ is globally generated for some natural number $m \geq 1$, 
  \item the maximal fibre dimension of $\phi_{|\shll{m}|}$ is at most $q$.
 \end{enumerate}
\end{defi}

More generally, Sommese defines a vector bundle $\she$ over $X$ to be  geometrically $q$-ample, if $\OO_{\PP(\she)}(1)$ is geometrically
$q$-ample, and goes on to prove  many interesting results for vector bundles (see \cite[Proposition 1.7]{Sommese} or \cite[Proposition 1.12]{Sommese}
for instance). In this paper we will only treat the line bundle case.

\begin{rmk}[Iitaka fibration]
We briefly recall the semi-ample or Iitaka fibration associated to a semi-ample line bundle $\shl$ on a normal projective variety $X$ \cite[Theorem 2.1.27]{PAG}: 
there exists an algebraic fibre space (a surjective projective morphism with connected fibres) $\phi\colon X\to Y$ with the property that for any sufficiently
large and divisible $m\in \NN$ one has 
\[
 \phi_{|\shll{m}|} \equ \phi \ \ \ \text{and}\ \ \ Y_m\equ Y\ ,
\]
where $Y_m$ denotes the image of $X$ under $ \phi_{|\shll{m}|}$. 

In addition there exists an ample line bundle $\sha$ on $X$ such that 
\[
 \phi^*\sha \equ \shll{k}
\]
for a suitable positive integer $k$. 
\end{rmk}

\begin{rmk}
If $X$ is a normal variety then Sommese's conditions are equivalent to requiring that $\shl$ is semi-ample and its semi-ample fibration has fibre dimension at most 
$q$. In the case when $X$ is not normal, it is  a priori not clear if the set of integers $q$ for which $\shl$ is $q$-ample depends on the choice of $m$; a posteriori
this follows from Sommese's theorem. Nevertheless, for this reason the definition via the Iitaka fibration is cleaner in the case of normal varieties. 
\end{rmk}

 We summarize Sommese's results in this direction.

\begin{thm}[Sommese; \cite{Sommese}, Proposition 1.7]\label{thm:Sommese equivalence}
Let $X$ be a projective variety, $\shl$ a semi-ample line bundle over $X$ with Iitaka fibration $\phi_\shl$. For a natural number $q$ 
the following are equivalent.
\begin{enumerate}
 \item[(i)] The line bundle $\shl$ is geometrically $q$-ample.
 \item[(ii)] The maximal dimension of an irreducible subvariety $Z\subseteq X$ with the property that  $\shl|_Z$ is trivial is at most $q$.
 \item[(iii)] If $\psi:Z\to X$ is a morphism from a projective variety $Z$ such that $\phi^*\shl$ is trivial, then $\dim Z\leq q$.
 \item[(iv)] The line bundle $\shl$ is naively $q$-ample, that is, for every coherent sheaf $\shf$ on $X$ there exists a natural number $m_0=m_0(\shl,\shf)$ 
 with the property that 
 \[
  \HH{i}{X}{\shf\otimes \shl^{\otimes m}} \equ 0 \ \ \ \text{for all $i>q$ and $m\geq m_0$.}
 \]
\end{enumerate}
\end{thm}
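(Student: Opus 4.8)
The plan is to prove the chain of equivalences (i) $\Rightarrow$ (ii) $\Rightarrow$ (iii) $\Rightarrow$ (iv) $\Rightarrow$ (i) by going through the Iitaka fibration $\phi\colon X\to Y$, using at each stage the structural fact that $\shl^{\otimes k}\cong\phi^*\sha$ for an ample line bundle $\sha$ on $Y$ and a suitable positive integer $k$. First I would treat (i) $\Rightarrow$ (ii): if $Z\subseteq X$ is an irreducible subvariety with $\shl|_Z$ trivial, then $\phi^*\sha|_Z$ is trivial, so $\phi|_Z$ is constant (an ample bundle restricted to a positive-dimensional subvariety of $Y$ cannot be trivial), hence $Z$ lies in a single fibre of $\phi$ and $\dim Z\leq q$ by geometric $q$-ampleness. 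The implication (ii) $\Rightarrow$ (iii) is essentially formal: given $\psi\colon Z\to X$ with $\psi^*\shl$ trivial, replace $Z$ by (the normalization of) the image $\psi(Z)$, which is an irreducible subvariety of $X$ on which $\shl$ restricts trivially, and note $\dim\psi(Z)\le q$ forces $\dim Z\le q$ once we observe that $\psi$ factors appropriately — actually one wants to argue that the fibres of $Z\to\psi(Z)$ are also small, so it is cleanest to apply (ii) to a general fibre together with (ii) to $\psi(Z)$; I would phrase (iii) so that "$\phi^*\shl$ trivial" means "$\psi^*\shl$ trivial" and handle it by the dimension count on images and fibres.

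The heart of the argument is (iii)/(ii) $\Rightarrow$ (iv), the passage from geometry to cohomological vanishing. Here I would argue that the maximal fibre dimension of $\phi$ is exactly $q$ (fibres of $\phi$ are exactly the maximal subvarieties on which $\shl$ is trivial, up to the usual subtleties), and then use the Leray spectral sequence for $\phi\colon X\to Y$ together with the projection formula: for a coherent sheaf $\shf$ on $X$,
\[
 \HH{i}{X}{\shf\otimes\shll{mk}} \;\cong\; \HH{i}{X}{\shf\otimes\phi^*\sha^{\otimes m}},
\]
and the Leray spectral sequence $\HH{p}{Y}{R^j\phi_*(\shf)\otimes\sha^{\otimes m}}\Rightarrow\HH{p+j}{X}{\shf\otimes\phi^*\sha^{\otimes m}}$ degenerates in the relevant range for $m\gg0$ because Serre vanishing on $Y$ kills $\HH{p}{Y}{-\otimes\sha^{\otimes m}}$ for $p>0$. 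Since the fibres of $\phi$ have dimension at most $q$, the higher direct images $R^j\phi_*\shf$ vanish for $j>q$ (by the theorem on formal functions / Grothendieck's bound on cohomological dimension of the fibres). Combining, $\HH{i}{X}{\shf\otimes\shll{mk}}=0$ for all $i>q$ and $m\gg0$; a routine interpolation argument (writing an arbitrary large power as $mk+r$ and absorbing $\shl^{\otimes r}$ into the coherent sheaf, as in the standard proof that $q$-ampleness can be checked along an arithmetic progression) upgrades this to vanishing for all $m\ge m_0$.

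The remaining implication (iv) $\Rightarrow$ (i) is the easy direction in this circle: we are already assuming $\shl$ is semi-ample, so only the fibre-dimension bound needs checking. If some fibre $F$ of $\phi_{|\shll{m}|}$ had dimension $>q$, then $\shl|_F$ would be trivial, and taking $\shf=\OO_F$ (pushed forward to $X$) we would get $\HH{i}{F}{\OO_F\otimes\shll{mk}|_F}=\HH{i}{F}{\OO_F}\ne0$ for $i=\dim F>q$ and all $m$, contradicting naive $q$-ampleness. The main obstacle I anticipate is the non-normal case and the attendant care needed in (ii) $\Leftrightarrow$ (iii) and in identifying fibres of $\phi$ with maximal trivializing subvarieties — indeed the surrounding remarks flag precisely that when $X$ is not normal it is not a priori clear the relevant $q$ is independent of $m$, so I would either reduce to the normalization via Lemma~\ref{lem: ample formal}(3) (finite surjective morphisms preserve $q$-ampleness in both directions) or absorb the subtlety into the statement of (iii). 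The other technical point requiring attention is justifying $R^j\phi_*\shf=0$ for $j>q$ uniformly, which is standard but should be cited carefully.
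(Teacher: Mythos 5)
Your argument for the implication from geometric $q$-ampleness to naive $q$-ampleness is essentially the paper's: Leray spectral sequence for the Iitaka fibration $\phi\colon X\to Y$, the projection formula via $\shll{k}=\phi^*\sha$, Serre vanishing on $Y$ to reduce to $\HH{0}{Y}{R^r\phi_*(\shf\otimes\shll{t})\otimes\sha^{\otimes s}}$, and the vanishing of $R^r\phi_*$ for $r>q$ from the fibre-dimension bound; the residue classes $t$ modulo $k$ are handled the same way. (The paper only proves (i)$\Leftrightarrow$(iv) and cites Sommese for (ii) and (iii), so your extra chain is not compared against anything, but note that (iii) as printed is garbled and your treatment of (ii)$\Rightarrow$(iii) is, as you admit, not yet an argument.)

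There is, however, a genuine gap in your proof of (iv)$\Rightarrow$(i). You take a fibre $F$ with $f=\dim F>q$, set $\shf=\OO_F$, and assert $\HH{f}{F}{\OO_F}\neq\{0\}$. This is false in general: the top cohomology of the structure sheaf of a projective variety of dimension $f$ can vanish, e.g.\ $\HH{f}{\PP^f}{\OO_{\PP^f}}=\{0\}$ for $f\geq 1$, and fibres of $\phi$ can perfectly well be projective spaces. So the skyscraper $R^f(\phi|_F)_*\OO_F$ may be zero and no contradiction with naive $q$-ampleness results. The paper's fix is exactly at this point: after replacing $F$ by a top-dimensional irreducible component and taking a resolution $\pi\colon\widetilde{F}\to F$, one uses the Grauert--Riemenschneider sheaf $\shf=\pi_*\omega_{\widetilde{F}}$. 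Then Grauert--Riemenschneider vanishing ($R^i\pi_*\omega_{\widetilde F}=0$ for $i>0$) together with Serre duality on the smooth irreducible $\widetilde F$ gives $\HH{f}{F}{\shf}\cong\HH{f}{\widetilde F}{\omega_{\widetilde F}}\neq\{0\}$, so $R^f\phi_*\shf$ is a genuinely nonzero skyscraper and the Leray argument produces the required nonvanishing $\HH{f}{X}{\shf\otimes\shll{ks}}\neq\{0\}$ for all $s$. You need some such careful choice of $\shf$ with guaranteed nonzero top cohomology; $\OO_F$ does not do the job.
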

\begin{proof}
All equivalences are treated in \cite{Sommese}, here we will describe the equivalence between (i) and (iv), that is, we prove that for semi-ample line bundles geometric
and cohomological $q$-ampleness agree. 

Let $m_0\geq 1$ be an integer for which $\shll{m_0}$ is globally generated, and let $\phi\colon X\to Y\subseteq \PP$ denote the associated morphism. Then there exists 
an ample line bundle $\sha$ on $Y$ and a positive integer $k$ such that $\shll{k}=\phi^*\sha$. Fix a coherent sheaf $\shf$ on $X$, and consider the Leray spectral
sequence
\[
 \HH{p}{Y}{R^r\phi_*(\shf\otimes \shll{m})} \Longrightarrow \HH{p+r}{X}{\shf\otimes\shll{m}}\ .
\]
Let us write $m=sk+t$ with $0\leq t<k$ and $s\geq 0$ integers. For the cohomology groups on the left-hand side
\begin{eqnarray*}
 \HH{p}{Y}{R^r\phi_*(\shf\otimes \shll{m})} & = &  \HH{p}{Y}{R^r\phi_*(\shf\otimes (\shll{k})^{\otimes s}\otimes\shll{t})} \\
 & = &  \HH{p}{Y}{R^r\phi_*(\shf\otimes \shll{t})\otimes \sha^{\otimes s}} 
\end{eqnarray*}
by the projection formula. Serre's vanishing theorem yields 
\[
 \HH{p}{Y}{R^r\phi_*(\shf\otimes \shll{t})\otimes \sha^{\otimes s}}  \equ 0 \ \ \text{for all $p\geq 1$, $s\gg 0$ and all $0\leq t<k$},
\]
hence 
\[
 \HH{0}{Y}{R^r\phi_*(\shf\otimes \shll{m})} \simeq \HH{r}{X}{\shf\otimes\shll{m}}
\]
for $m\gg 0$. On the other hand, if the maximal fibre dimension of $\phi$ is $q$, then $R^r\phi_*(\shf\otimes \shll{m})=0$ for all $r>q$, therefore 
\[
 \HH{r}{X}{\shf\otimes\shll{m}} \equ 0 \ \ \text{for all $m\gg 0$ and $r>q$}
\]
and consequently $\shl$ is naively $q$-ample as claimed. 

For the other implication assume that $\shl$ is not geometrically $q$-ample, hence has a fibre $F\subseteq X$ of dimension $f>q$. Starting from here one constructs
a coherent sheaf $\shf$ on $F$ having the property that $R^f\phi_*\shf$ is a skyscraper sheaf, which, by the Leray spectral sequence above, would imply that 
\[
 \HH{f}{X}{\shf\otimes\shll{ks}} \,\neq\, 0\ \ \text{for all $s\geq 1$.}
\]
Without loss of generality we can assume that $F$ is irreducible, otherwise we replace it by one of its top-dimensional irreducible components. 
Let $\pi\colon \widetilde{F}\to F$ denote a resolution of singularities of $F$, and consider the Grauert--Riemenschneider canonical sheaf 
\[
 \shf \deq \shk_{\widetilde{F}/F} \deq \pi_*\omega_{\widetilde{F}}\ .
\]
By Grauert--Riemenschneider \cite{GrRiem}
\[
 R^f(\phi|_F)_*\shf \simeq \HH{f}{F}{\shf} \,\neq\,  0 
\]
as $\phi|_F$ maps $F$ to a point. 
\end{proof}

\begin{cor}\label{cor:sum semiample}
 Let $D_1$ and $D_2$ be geometrically $q$-ample divisors on a smooth variety. Then so is $D_1+D_2$.
\end{cor}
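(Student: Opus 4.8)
The plan is to realise the semi-ample fibration of $\OO_X(D_1+D_2)$ as the Stein factorisation of the product of the semi-ample fibrations of $\OO_X(D_1)$ and $\OO_X(D_2)$, and then to bound its fibre dimension by that of the first factor. Throughout, set $M:=\OO_X(D_1)\otimes\OO_X(D_2)=\OO_X(D_1+D_2)$ and write $\phi_i\colon X\to Y_i$ for the semi-ample (Iitaka) fibration of $\shl_i:=\OO_X(D_i)$, so that $\phi_i^*A_i=\shl_i^{\otimes k_i}$ for an ample line bundle $A_i$ on $Y_i$ and a positive integer $k_i$; after replacing $A_i$ by a suitable power we may assume $k_1=k_2=:k$.

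First I would record that $M$ is semi-ample: if $\shl_1^{\otimes m}$ and $\shl_2^{\otimes m}$ are globally generated, then so is their tensor product $M^{\otimes m}$. Next, consider the product morphism $\phi:=(\phi_1,\phi_2)\colon X\to Y_1\times Y_2$ and let $Y\subseteq Y_1\times Y_2$ be its (closed, irreducible, reduced) image. Since the external tensor product $A_1\boxtimes A_2$ is ample on $Y_1\times Y_2$, its restriction $B:=(A_1\boxtimes A_2)|_Y$ is ample on $Y$, and by construction $\phi^*B=\shl_1^{\otimes k}\otimes\shl_2^{\otimes k}=M^{\otimes k}$. Taking the Stein factorisation $X\xrightarrow{\ \psi\ }W\xrightarrow{\ g\ }Y$, with $\psi$ a fibration and $g$ finite, the line bundle $g^*B$ is again ample on $W$ and $\psi^*(g^*B)=M^{\otimes k}$; a standard computation using the projection formula and $\psi_*\OO_X=\OO_W$ then shows that $\psi$ is a model of the semi-ample fibration of $M$ (for $m\gg0$ one gets $\HH{0}{X}{M^{\otimes km}}=\HH{0}{W}{(g^*B)^{\otimes m}}$, hence $\phi_{|M^{\otimes km}|}\cong\psi$).

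It remains to bound the fibre dimension of $\psi$. Since $\phi=g\circ\psi$ and $\phi_1=p_1\circ\phi$ (where $p_1$ is the first projection), every fibre of $\psi$ is contained in a fibre $\phi^{-1}(y_1,y_2)=\phi_1^{-1}(y_1)\cap\phi_2^{-1}(y_2)\subseteq\phi_1^{-1}(y_1)$ of $\phi_1$; as $X$ is smooth, hence normal, the geometric $q$-ampleness of $\shl_1$ means exactly that all fibres of $\phi_1$ have dimension $\le q$. Therefore the semi-ample fibration of $M$ has all fibres of dimension $\le q$, so $M=\OO_X(D_1+D_2)$ is geometrically $q$-ample, again because $X$ is normal. (Alternatively one can bypass the fibration and argue via characterisation (ii) of Theorem~\ref{thm:Sommese equivalence}: if $Z$ is irreducible with $M|_Z$ trivial, then $\shl_1|_Z$ and $\shl_2|_Z$ are nef with numerically trivial sum, hence each is numerically trivial on $Z$; numerical triviality of $\shl_1|_Z$ forces $\phi_1(Z)$ to be a point, so $Z$ lies in a fibre of $\phi_1$ and $\dim Z\le q$.)

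I expect the only genuinely delicate point to be the identification in the second paragraph of the Stein-factorisation map $\psi$ with the semi-ample fibration of $M$ — that is, checking that it is really this morphism whose fibre dimension governs geometric $q$-ampleness. Everything else (semi-ampleness of a tensor product, ampleness of external tensor products and of pullbacks under finite maps, containment of fibres) is routine.
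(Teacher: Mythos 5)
Your argument is correct. The paper itself gives no proof of this corollary --- it simply cites \cite[Corollary 1.10.2]{Sommese} --- so you have in effect supplied the argument that the paper defers to the literature. Your main route is sound: once you have a fibration $\psi\colon X\to W$ with $\psi^*(g^*B)=M^{\otimes k}$ for an ample $g^*B$, the uniqueness part of \cite[Theorem~2.1.27]{PAG} identifies $\psi$ with the semi-ample fibration of $M$ (this is exactly the ``delicate point'' you flag, and your projection-formula computation settles it), and the containment of fibres of $\psi$ in fibres of $\phi_1$ gives the bound $\leq q$ since $X$ is normal. It is worth noting that your parenthetical alternative is essentially Sommese's own proof: he argues via characterisation (ii)/(iii) of Theorem~\ref{thm:Sommese equivalence}, observing that if $\OO_X(D_1+D_2)$ is trivial on an irreducible $Z$ then each $\OO_X(D_i)|_Z$, being nef with numerically trivial sum, is numerically trivial, which confines $Z$ to a fibre of $\phi_1$ and hence forces $\dim Z\leq q$. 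That version is shorter and bypasses the Stein-factorisation step entirely; your longer version has the mild advantage of exhibiting the semi-ample fibration of $D_1+D_2$ explicitly as a quotient of $(\phi_1,\phi_2)$, which also reproves semi-ampleness of the sum along the way.
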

\begin{proof}
 This is \cite[Corollary 1.10.2]{Sommese}.
\end{proof}

Additionally, Kodaira-Akizuki-Nakano vanishing continues to hold for the expected range of cohomology groups and degrees of differential forms:
\begin{thm}[Kodaira--Akizuki--Nakano for geometrically $q$-ample bundles]\label{thm:qKodairageometric}
Let $\shl$ be a geometrically $q$-ample line bundle on a smooth projective variety $X$. Then,
\[
 \HH{i}{X}{\wedge^j\Omega_X\otimes\shl} \equ \{0\} \ \ \ \text{for all $i + j>\dim X + q$.}
\]
In particular, the following \emph{$q$-Kodaira vanishing} holds:
\[\HH{i}{X}{\omega_X \otimes\shl} \equ \{0\} \ \ \ \text{for all $i > q$.}\]
\end{thm}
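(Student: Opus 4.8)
The assertion is a Kodaira--Akizuki--Nakano vanishing for a semi-ample line bundle whose associated contraction has all fibres of dimension at most $q$, and the plan is to transport it to the base of the Iitaka fibration and finish there with classical vanishing theorems. Semi-ampleness furnishes an algebraic fibre space $\phi\colon X\to Y$ onto a normal projective variety, an ample line bundle $\sha$ on $Y$, and an integer $k\geq 1$ with $\shll{k}\cong\phi^{*}\sha$; by Theorem~\ref{thm:Sommese equivalence} geometric $q$-ampleness says exactly that every fibre of $\phi$ has dimension $\leq q$. Hence $R^{r}\phi_{*}\shg=0$ for $r>q$ and every coherent sheaf $\shg$ on $X$ (Grothendieck vanishing along the fibres together with the theorem on formal functions), so the Leray spectral sequence $\HH{p}{Y}{R^{r}\phi_{*}(\wedge^{j}\Omega_{X}\otimes\shl)}\Rightarrow\HH{p+r}{X}{\wedge^{j}\Omega_{X}\otimes\shl}$ lives in the rows $0\leq r\leq q$. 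With $n=\dim X$, this reduces the theorem to the vanishing of $\HH{p}{Y}{R^{r}\phi_{*}(\wedge^{j}\Omega_{X}\otimes\shl)}$ for $p\geq n+1-j$, the binding case being $r=q$.

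The case $j=n$ is the transparent one: $\HH{i}{X}{\omega_{X}\otimes\shl}=0$ for $i>q$ is Kawamata--Viehweg vanishing for the semi-ample bundle $\shl$ (indeed $\dim X-\kappa(\shl)=\dim X-\dim Y\leq q$), which I would simply quote; alternatively, running the Leray argument with $\shll{k}=\phi^{*}\sha$, the projection formula gives $R^{r}\phi_{*}(\omega_{X}\otimes\phi^{*}\sha)=R^{r}\phi_{*}\omega_{X}\otimes\sha$, whose higher cohomology vanishes by Koll\'ar's vanishing theorem and which is zero for $r>q$ as noted, so that $\HH{i}{X}{\omega_{X}\otimes\shll{k}}=0$ for $i>q$, and one then descends to $\shl$ by the device in the next paragraph. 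For general $j$ the same two steps apply once relative Hodge theory is fed in: over the open set where $\phi$ is smooth, $0\to\phi^{*}\Omega_{Y}\to\Omega_{X}\to\Omega_{X/Y}\to 0$ filters $\wedge^{j}\Omega_{X}$ with graded pieces $\phi^{*}\wedge^{a}\Omega_{Y}\otimes\wedge^{b}\Omega_{X/Y}$, $a+b=j$; these push down, after the twist by $\shll{k}=\phi^{*}\sha$, to twists of $\wedge^{a}\Omega_{Y}$ by the Hodge sheaves $R^{r}\phi_{*}\wedge^{b}\Omega_{X/Y}$, which are supported in the range $b,r\leq q$ and carry Fujita--Koll\'ar semipositivity, so that Kodaira--Akizuki--Nakano on a resolution $\widetilde{Y}\to Y$ (in its form for twists by ample-times-nef bundles), descended via the torsion-freeness of Koll\'ar-type direct images, kills the cohomology in the required range; equally, the full range of $j$ can be reached by a Deligne--Illusie-type $E_{1}$-degeneration after a cyclic cover.

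The main obstacle is that the naive reduction --- push everything to $Y$ and invoke ordinary Kodaira--Akizuki--Nakano --- is blocked on two fronts: $Y$ is usually singular and $\phi$ has singular fibres, so the relative cotangent filtration above degenerates; and $\shl$ itself need not be a pullback, only $\shll{k}$ is, so along a fibre $\shl$ is merely a torsion line bundle and $R^{r}\phi_{*}(\wedge^{j}\Omega_{X}\otimes\shl)$ is not literally controlled by the Hodge sheaves of $\phi$. Both are precisely what Koll\'ar's package (torsion-freeness and vanishing of higher direct images of dualizing sheaves) and the cyclic-covering trick are built to circumvent. Concretely, over $\CC$ Bertini produces a \emph{smooth} $D\in|\shll{m}|$ for $m\gg 0$ --- one pulls a general hyperplane back along $X\to\PP^{N}$ rather than along the singular $Y$ --- so the $m$-fold cyclic cover $\pi\colon Z\to X$ branched along $D$ is a smooth projective variety with $\pi^{*}\shl\cong\OO_{Z}(\widetilde{D})$ for a smooth $\widetilde{D}$, with $\pi^{*}\shl$ geometrically $q$-ample (the fibres of $\phi\circ\pi$ still have dimension $\leq q$), and with $\HH{i}{X}{\wedge^{j}\Omega_{X}\otimes\shl}$ an eigen-summand of cohomology on $Z$; after the standard logarithmic comparison of $\pi^{*}\Omega_{X}$ with $\Omega_{Z}$ one runs the Esnault--Viehweg/Deligne--Illusie machinery on $Z$ and reads off the eigenspaces. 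Carrying that comparison out while keeping the bound at $+q$ is the delicate part; it can be avoided altogether by appealing to Koll\'ar-type vanishing for semi-ample (equivalently, $\phi$-torsion) line bundles, which is designed for exactly this ``root of a pullback'' situation.
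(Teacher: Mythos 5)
The paper does not reprove this statement: its proof is a one-line citation of Sommese's original argument, and the remark immediately following points to Esnault--Viehweg for the sharpened version. Measured against a complete proof, your proposal correctly isolates the two standard routes and the genuine difficulties, and your treatment of the top-degree case $j=\dim X$ is sound: the Leray collapse $R^r\phi_*=0$ for $r>q$ combined with Koll\'ar vanishing for $R^r\phi_*\omega_X\otimes\sha$ (or simply the nef form of Kawamata--Viehweg, since $\dim X-\dim Y\leq q$) does give $\HH{i}{X}{\omega_X\otimes\shl}=\{0\}$ for $i>q$.

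For $j<\dim X$, however, what you have written is a programme rather than a proof, and one of your two routes would not survive being made precise. The filtration of $\wedge^j\Omega_X$ by the pieces $\phi^*\wedge^a\Omega_Y\otimes\wedge^b\Omega_{X/Y}$ exists only over the smooth locus of $\phi$; moreover $Y$ is in general singular, the sheaves $R^r\phi_*\wedge^b\Omega_{X/Y}$ with $0<b<\dim X-\dim Y$ are exactly the ones to which Koll\'ar's torsion-freeness and vanishing package does \emph{not} apply (it concerns dualizing sheaves only), and Fujita--Koll\'ar semipositivity will not extend this filtration across the singular fibres. The cyclic-cover route is the correct one --- a general $D\in|\shll{m}|$ is smooth by Bertini, $X\setminus D=\phi^{-1}(Y\setminus H)$ satisfies $\cd(X\setminus D)\leq q$ because $Y\setminus H$ is affine and the fibres of $\phi$ have dimension at most $q$, and $E_1$-degeneration of the logarithmic de Rham complex on the cover then yields the vanishing with the $+q$ shift --- but this is precisely the comparison you declare ``delicate'' and leave unexecuted, and your proposed escape hatch (``Koll\'ar-type vanishing for semi-ample line bundles'') only ever produces the $j=\dim X$ statement, not the full Akizuki--Nakano range. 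So as it stands the proposal establishes the displayed $q$-Kodaira vanishing but not the first, stronger assertion of the theorem; to close the gap you must actually carry out the Esnault--Viehweg argument (their Corollary~6.6), which is exactly what the paper's remark following the theorem points to.
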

\begin{proof}
This is proven in \cite[Proposition 1.12]{Sommese}.
\end{proof}
\begin{rmk}
 Sommese's version of the Kodaira-Akizuki-Nakano vanishing was later shown by Esnault and Viehweg to hold for an even larger range of values for $i$ and $j$, see \cite{EsnaultViehwegOriginal} and \cite[Corollary~6.6]{EsnaultViehweg}.
\end{rmk}

In Example~\ref{ex:counterexampleOttem} and Section~\ref{sect:qKodaira} below we discuss the question whether Kodaira vanishing still continues to hold when one drops the semiampleness condition, i.e., for general $q$-ample line bundles.

\subsection{Ample subschemes, and a Lefschetz hyperplane theorem for $q$-ample divisors}\label{subsect:Lefschetz}

Building upon the theory of $q$-ample line bundles and  Hartshorne's classical work \cite{HS_Ample}, Ottem \cite{Ottem} defines the notion of an ample 
subvariety (or subscheme),  and goes on to verify that ample subvarieties share many of the significant algebro-geometric and topological properties of their codimension-one counterparts.
One of the highlights of his work is a Lefschetz-type hyperplane theorem, which we will use for a proof of Kodaira's vanishing theorem for 
effective $q$-ample Du Bois divisors in Section~\ref{sect:qKodaira} below. Here we briefly recall the theory obtained in \cite{Ottem}. 

\begin{defi}[Ample subschemes]\label{defi:amplesubscheme}
Let $X$ be a projective scheme, $Y\subseteq X$ a closed subscheme of codimension $r$, $\pi:\widetilde{X}\to X$ the blow-up of $Y$ with exceptional divisor 
$E$. Then, $Y$ is called \emph{ample} if $E$ is $(r-1)$-ample on $\widetilde{X}$. 
\end{defi}

The idea behind this notion is classical: it has been known for a long time that positivity properties of $Y$ can be often read off from the geometry of 
the complement $X-Y\simeq \widetilde{X}-E$. In spite of this, the concept has not been defined until recently. 
\begin{eg}
As it can be expected, linear subspaces of projective spaces are ample.
\end{eg}

\begin{rmk}[Cohomological dimension of the complement of an ample subscheme]\label{rmk:cd of complement}
An important geometric feature of ample divisors is that their complement is affine. In terms of cohomology, this is equivalent to requiring
\[
 \HH{i}{X}{\shf} \equ \{0\} \ \ \ \text{for all $i>0$, $\shf$ coherent sheaf on $X$.}
\]
 If we denote as customary the cohomological dimension of a subset $Y\subseteq X$ by $\cd(Y)$, then we can phrase Ottem's generalization \cite[Proposition~5.1]{Ottem} to the $q$-ample 
case as follows: if $U\subseteq X$ is an open subset of a projective scheme $X$ having the property that $X\setminus U$ is the support of a $q$-ample divisor, 
then 
\[
 \cd(U) \dleq q\ .
\]
\end{rmk}

The observation on cohomological dimensions of complements leads to the following Lefschetz-type statement. 

\begin{thm}[Generalized Lefschetz hyperplane theorem, Ottem; \cite{Ottem}, Corollary 5.2]\label{thm:gen Lefschetz}
Let $D$ be an effective $q$-ample divisor on a projective variety $X$ with smooth complement. Then, the restriction morphism 
\[
 \HH{i}{X}{\QQ} \lra \HH{i}{D}{\QQ} \text{\ is\ }\ \  \begin{cases} \text{an isomorphism for $0\leq i\leq n-q-1$,} \\ \text{injective for $i=n-q-1$}.
                                    \end{cases}
\]
\end{thm}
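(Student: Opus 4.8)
The plan is to deduce the generalized Lefschetz hyperplane theorem from the cohomological dimension bound recorded in Remark~\ref{rmk:cd of complement}, following the classical pattern that relates the cohomology of $X$, the cohomology of $D$, and the local cohomology of $X$ along $D$. Set $U \deq X \setminus D$, which by hypothesis is smooth, and recall from Remark~\ref{rmk:cd of complement} that $\cd(U) \leq q$ since $D$ is the support of a $q$-ample divisor. The key exact sequence is the long exact sequence of the pair $(X, U)$ in singular (or Betti) cohomology with $\QQ$-coefficients,
\begin{equation*}
 \cdots \lra \HH{i}{X, U}{\QQ} \lra \HH{i}{X}{\QQ} \lra \HH{i}{U}{\QQ} \lra \HH{i+1}{X, U}{\QQ} \lra \cdots\ ,
\end{equation*}
and the comparison, via excision and (Alexander--Lefschetz) duality, of the relative groups $\HH{i}{X,U}{\QQ}$ with the cohomology of a tubular neighbourhood of $D$, which retracts onto $D$.

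First I would make the reduction to a statement about $U$: since $D$ is a deformation retract of a small neighbourhood of $D$ in $X$ (or, more carefully, since $X$ is a reasonable space and one can use Borel--Moore/local cohomology), the restriction map $\HH{i}{X}{\QQ}\to\HH{i}{D}{\QQ}$ fits into a long exact sequence whose third terms are the Borel--Moore homology groups $H^{BM}_{2n-i}(U,\QQ)$ of the open complement, up to a degree shift by $2n = 2\dim X$. Concretely, one has $\HH{i}{X,D}{\QQ} \cong H^{BM}_{2n-i}(U)$ by Poincaré--Lefschetz duality on the (possibly singular, but here we only need $U$ smooth for the duality on $U$) — in fact the cleanest route is to use that $U$ is a smooth variety of dimension $n$, so $H^{BM}_{k}(U,\QQ)\cong \HH{2n-k}{U}{\QQ}$. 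Thus the cokernel and kernel of $\HH{i}{X}{\QQ}\to\HH{i}{D}{\QQ}$ are controlled by $\HH{2n-i}{U}{\QQ}$ and $\HH{2n-i+1}{U}{\QQ}$ (again up to careful bookkeeping of the exact sequence).

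Next I would feed in the bound $\cd(U)\leq q$. The point is that $\cd$ here is the coherent-cohomological dimension, but by Ogus/Hartshorne-type comparison results the vanishing of coherent cohomology above degree $q$ forces the de Rham (equivalently, by the smoothness of $U$ and the degeneration of the relevant spectral sequence, singular) cohomology $\HH{k}{U}{\QQ}$ to vanish for $k > n + q$. Hence $\HH{2n-i}{U}{\QQ} = 0$ whenever $2n - i > n + q$, i.e. whenever $i < n - q$, and similarly $\HH{2n-i+1}{U}{\QQ} = 0$ whenever $i < n - q - 1$ (strictly, $i \leq n-q-1$ makes $2n-i+1 = n + q + (n - i + 1 - q) \geq n+q+1$). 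Plugging these vanishings into the long exact sequence, the restriction map is an isomorphism for $i$ in the range where both neighbouring $U$-groups vanish, and injective in the boundary case — which yields exactly the two assertions of the theorem. I would then just double-check the indexing to land on the stated ranges $0\leq i\leq n-q-1$ for isomorphism and $i = n-q-1$ for injectivity (noting the apparent typo in the statement, where the isomorphism range already contains $i=n-q-1$).

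The main obstacle, and the step needing the most care, is the passage from the coherent cohomological dimension bound $\cd(U)\leq q$ to a vanishing statement for \emph{singular/Betti} cohomology of $U$ with $\QQ$-coefficients. This is where one invokes the comparison between algebraic de Rham cohomology and coherent cohomology: on a smooth variety $U$ with $\cd(U)\le q$, the hypercohomology spectral sequence $\HH{p}{U}{\Omega^r_U}\Rightarrow H^{p+r}_{dR}(U)$ has $E_1$-terms vanishing for $p>q$, so $H^k_{dR}(U)=0$ for $k > n+q$, and then $H^k_{dR}(U)\cong \HH{k}{U^{an}}{\CC}$ by Grothendieck's comparison theorem. (This is precisely the mechanism Ottem uses, and it is legitimate to cite \cite[Cor.~5.2]{Ottem} directly for the whole statement; but the above is how I would reconstruct the argument.) A secondary technical point is ensuring the duality identification $\HH{i}{X,D}{\QQ}\cong \HH{2n-i}{U}{\QQ}$ holds despite possible singularities of $X$ and $D$ — this is fine because the duality is applied on the \emph{smooth} open $U$, with the relative group interpreted as compactly-supported cohomology $H^i_c(U,\QQ)$, and one only needs the long exact sequence $\cdots\to H^i_c(U)\to \HH{i}{X}{\QQ}\to\HH{i}{D}{\QQ}\to H^{i+1}_c(U)\to\cdots$ together with $H^i_c(U)\cong \HH{2n-i}{U}{\QQ}^\vee$.
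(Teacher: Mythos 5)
Your argument is correct and follows essentially the same route as the paper's proof: reduce, via the long exact sequence of the pair and duality on the smooth complement $U = X\setminus D$, to the vanishing $\HH{k}{U}{\CC}=\{0\}$ for $k>n+q$, and deduce that vanishing from $\cd(U)\leq q$ together with the Hodge--de Rham (Fr\"olicher) spectral sequence and Grothendieck's comparison theorem. The only blemish is the index slip $H^{2n-i+1}(U)$ where $H^{2n-i-1}(U)$ is meant; the ranges you ultimately land on (isomorphism for $i<n-q-1$, injectivity for $i=n-q-1$) are nevertheless the correct ones, and you are right that the isomorphism range in the displayed statement contains a typo.
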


We give Ottem's proof to show the principles at work. 
\begin{proof}
Via the long exact sequence for relative cohomology and Lefschetz duality, the statement reduces to the claim that 
\[
 \HH{i}{X\setminus D}{\CC} \equ \{0\} \ \ \ \text{for $i>n+q$}.
\]
 This latter follows from the Fr\"olicher spectral sequence
\[
 E_1^{st} \equ \HH{s}{X \setminus D}{\Omega^t_{X \setminus D}} \Longrightarrow \HH{s+t}{X\setminus D}{\CC},
\]
as 
\[
 \HH{s}{X\setminus D}{\Omega^t_{X \setminus D}} \equ \{0\} \ \  \ \text{for all $s+t>n+q$}
\]
by Remark~\ref{rmk:cd of complement}.
\end{proof}

\begin{cor}[Lefschetz hyperplane theorem for ample subvarieties; \cite{Ottem}, Corollary 5.3]
Let $Y$ be an ample  local complete intersection subscheme in a smooth complex projective variety $X$. Then, the restriction morphism
\[
 \HH{i}{X}{\QQ}\lra \HH{i}{Y}{\QQ}\ \ \text{is\ \ } \begin{cases} \text{an isomorphism} & \ \text{if $i<\dim Y$} \\
                                                 \text{injective} & \ \text{if $i=\dim Y$.}
                                                \end{cases}
\]
\end{cor}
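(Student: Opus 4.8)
The plan is to run, for the codimension-$r$ subscheme $Y$, the very argument used to prove Theorem~\ref{thm:gen Lefschetz} for divisors; the only new point is to produce a bound on the cohomological dimension of $X\setminus Y$, and that is where the ampleness of $Y$ — via the blow-up — enters. So first I would set $r\deq\codim_X Y$ and $U\deq X\setminus Y$, a smooth quasi-projective variety of dimension $n\deq\dim X$ (smooth because $X$ is). Let $\pi\colon\widetilde X\to X$ be the blow-up of $Y$ with exceptional divisor $E$: by Definition~\ref{defi:amplesubscheme} the hypothesis that $Y$ is ample says precisely that $E$ is $(r-1)$-ample on the projective scheme $\widetilde X$, while $\pi$ restricts to an isomorphism $\widetilde X\setminus E\cong U$. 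Applying Remark~\ref{rmk:cd of complement} to the $(r-1)$-ample divisor $E$ then gives $\cd(U)=\cd(\widetilde X\setminus E)\leq r-1$. Feeding this into the Fr\"olicher spectral sequence $E_1^{s,t}=\HH{s}{U}{\Omega_U^t}\Longrightarrow\HH{s+t}{U}{\CC}$ — exactly as in the proof of Theorem~\ref{thm:gen Lefschetz}, using only $\HH{s}{U}{\mathcal G}=\{0\}$ for $s>r-1$ and $\mathcal G$ coherent together with $\Omega_U^t=0$ for $t>n$ — yields
\[
 \HH{i}{U}{\CC}\equ\{0\}\qquad\text{for all }\ i>n+r-1 ,
\]
and hence the same with $\QQ$-coefficients.

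Next I would convert this vanishing into the statement about the restriction map by Poincar\'e--Lefschetz duality, again following the proof of Theorem~\ref{thm:gen Lefschetz}. Since $X$ is compact and $Y$ is closed in $X$, there is a long exact sequence
\[
 \cdots\lra H^i(X,Y;\QQ)\lra\HH{i}{X}{\QQ}\arr{\iota^*}\HH{i}{Y}{\QQ}\lra H^{i+1}(X,Y;\QQ)\lra\cdots
\]
for the inclusion $\iota\colon Y\hookrightarrow X$, with $H^i(X,Y;\QQ)\cong H^i_c(U;\QQ)$; and Poincar\'e duality for the oriented $2n$-manifold $U$ provides a perfect pairing $H^i_c(U;\QQ)\cong\bigl(H^{2n-i}(U,\QQ)\bigr)^{*}$. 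Combining these, $H^i_c(U;\QQ)=\{0\}$ whenever $2n-i>n+r-1$, i.e.\ whenever $i\leq n-r=\dim Y$. Plugging this back into the long exact sequence shows that $\iota^*$ is injective once $H^i_c(U;\QQ)=\{0\}$, hence for $i\leq\dim Y$, and an isomorphism once $H^i_c(U;\QQ)=H^{i+1}_c(U;\QQ)=\{0\}$, hence for $i\leq\dim Y-1$; this is exactly the asserted statement.

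The hard part is not any single step but checking that the two standard inputs are genuinely legitimate at this level of generality. One must observe that Remark~\ref{rmk:cd of complement} applies to $\widetilde X$, which is only a projective scheme and need not be smooth when $Y$ is singular — that remark, however, needs only projectivity of the ambient scheme — and that the excision isomorphism $H^i(X,Y;\QQ)\cong H^i_c(U;\QQ)$ and the duality pairing survive for the possibly singular closed subset $Y\subset X$: they do, since Poincar\'e duality is invoked only on the smooth open piece $U$ and never on $Y$ itself. I would also sanity-check the numerology against the divisor case $r=1$, where the statement must specialize to the classical Lefschetz hyperplane theorem for ample divisors. For smooth $Y$ one could instead transport Theorem~\ref{thm:gen Lefschetz} directly from $(\widetilde X,E)$ to $(X,Y)$ through the blow-up decomposition $H^{*}(\widetilde X)\cong H^{*}(X)\oplus\bigoplus_{j=1}^{r-1}H^{*-2j}(Y)$ together with Leray--Hirsch for the $\PP^{r-1}$-bundle $E\to Y$ — the key point being that $E\hookrightarrow\widetilde X$ carries the ``new'' summands into the strictly positive powers of the relative hyperplane class because $c_1(\mathcal O_E(E))=-\xi$ — but this shortcut collapses for singular $Y$, which is why the direct argument above is the robust one.
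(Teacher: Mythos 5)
The paper itself gives no proof of this corollary (it is quoted from Ottem), but your argument is precisely the intended one: use the blow-up and Definition~\ref{defi:amplesubscheme} to get $\cd(X\setminus Y)=\cd(\widetilde X\setminus E)\le r-1$ from Remark~\ref{rmk:cd of complement}, and then rerun the Fr\"olicher spectral sequence / duality / long-exact-sequence-of-the-pair argument from the paper's proof of Theorem~\ref{thm:gen Lefschetz}, with the numerology $H^{i}_{c}(U;\QQ)=0$ for $i\le\dim Y$ worked out correctly. This is correct and essentially the same approach as the paper's treatment of the divisor case; it is worth noting only that your direct route through the pair $(X,Y)$ never actually uses the lci hypothesis (which would enter only in the $E\to Y$ projective-bundle shortcut you mention at the end), so the proof is sound as written.
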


The following is a summary of properties of smooth ample subschemes.

\begin{thm}[Properties of smooth ample subschemes, \cite{Ottem}, Corollary 5.6 and Theorem 6.6]
Let $X$ be a smooth projective variety, $Y\subseteq X$ a non-singular ample subscheme of dimension $d\geq 1$. Then, the following hold:
\begin{enumerate}
 \item The normal bundle $N_{Y/X}$ of $Y$ is an ample vector bundle.
 \item For every irreducible $(\dim X-d)$-dimensional subvariety $Z\subseteq X$ one has $(Y\cdot Z)>0$. In particular, $Y$ meets every divisor.
 \item The Lefschetz hyperplane theorem holds for rational cohomology holds on $Y$:
 \[
 \HH{i}{X}{\QQ}\lra \HH{i}{Y}{\QQ}\ \ \text{is\ \ } \begin{cases} \text{an isomorphism} & \ \text{if $i<\dim Y$} \\
                                                 \text{injective} & \ \text{if $i=\dim Y$.}
                                                \end{cases}
\]
\item Let $\widehat{X}$ denote the completion of $X$ with respect to $Y$. For any coherent sheaf $\shf$ on $X$ one has 
\[
 \HH{i}{X}{\shf}\lra \HH{i}{\widehat{X}}{\shf}\ \ \text{is\ \ } \begin{cases} \text{an isomorphism} & \ \text{if $i<\dim Y$} \\
                                                 \text{injective} & \ \text{if $i=\dim Y$.}
                                                \end{cases}
\]
\item The inclusion $Y\hookrightarrow X$ induces a surjection
\[
 \pi_1(Y)\twoheadrightarrow \pi_1(X)
\]
on the level of fundamental groups. 
\end{enumerate}
\end{thm}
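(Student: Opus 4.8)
The plan is to reduce all five assertions to the single hypothesis on the blow-up $\pi\colon\widetilde{X}\to X$ --- that $\OO_{\widetilde{X}}(E)$ is $(r-1)$-ample --- by moving cohomological positivity back and forth between $\widetilde{X}$, the exceptional divisor $E$, and the common complement $X\setminus Y\cong\widetilde{X}\setminus E$. Two preliminary remarks will be used throughout. First, $E$ is a $\PP^{r-1}$-bundle $p\colon E\to Y$, canonically $\PP(N_{Y/X}^{\vee})=\Proj\Sym(\mathcal I_Y/\mathcal I_Y^2)$, with $\OO_{\widetilde{X}}(-E)|_E=\OO_E(1)$. Second, since $E$ is the support of the $(r-1)$-ample divisor $\OO_{\widetilde{X}}(E)$, Remark~\ref{rmk:cd of complement} yields
\[
\cd(X\setminus Y)\ =\ \cd(\widetilde{X}\setminus E)\ \leq\ r-1.
\]

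For (1) I would restrict the $(r-1)$-ampleness of $\OO_{\widetilde{X}}(E)$ to the closed subscheme $E$ (Lemma~\ref{lem: ample formal}), apply it to the sheaves $p^{*}\shf$ for $\shf$ coherent on $Y$, and push down along $p$: by relative duality on the $\PP^{r-1}$-bundle, $R^{b}p_{*}\OO_E(-m)$ is concentrated in degree $b=r-1$, where it is $\Sym^{m-r}N_{Y/X}\otimes\det N_{Y/X}$ once $m\geq r$. The Leray spectral sequence then converts the vanishing of $\HHv{i}{E}{p^{*}\shf\otimes\OO_E(-m)}$ for $i>r-1$ and $m\gg0$ into
\[
\HHv{j}{Y}{\shf\otimes\Sym^{k}N_{Y/X}}\ =\ \{0\}\qquad\text{for all }j>0\text{ and }k\gg0
\]
(after absorbing the line bundle $\det N_{Y/X}$ into $\shf$), which is exactly the cohomological criterion for $N_{Y/X}$ to be an ample vector bundle.

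For (2): if an irreducible $Z$ of dimension $n-d=r$ were disjoint from $Y$ it would embed in $X\setminus Y$, forcing $\cd(Z)\leq r-1$, which is impossible for a complete variety of dimension $r$; hence $Y$ meets $Z$. Positivity of $(Y\cdot Z)$ is clear when the intersection is proper, and in the excess case it follows from the excess-intersection formula together with the ampleness of $N_{Y/X}$ just obtained (the excess bundle is a quotient of a restriction of $N_{Y/X}$, hence has numerically positive top Chern class). Any divisor contains an irreducible subvariety of dimension $r$, so it meets $Y$. Statement (3) I would not reprove: a non-singular subscheme of a smooth variety is a local complete intersection, so this is the Corollary to Theorem~\ref{thm:gen Lefschetz}, obtained by feeding the effective $(r-1)$-ample divisor $E\subseteq\widetilde{X}$ (with smooth complement) into that theorem and cancelling the common summands in the blow-up and projective-bundle formulas for $\HHv{i}{\widetilde{X}}{\QQ}$ and $\HHv{i}{E}{\QQ}$.

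For (4) the plan is to compare $\HHv{i}{X}{\shf}$ with $\HHv{i}{\widehat{X}}{\shf}=\varprojlim_{k}\HHv{i}{X_{k}}{\shf}$ through the long exact sequence relating both groups to $H^{\bullet}$ of $X\setminus Y$ (equivalently, to algebraic local cohomology along $Y$); the bound $\cd(X\setminus Y)\leq r-1$ kills the error terms below degree $\dim Y$ and leaves only an injection at $\dim Y$. This is Hartshorne's formal-functions comparison theorem for subvarieties with cohomologically small complement, so I would invoke it rather than redo it. Finally, (5) is the step I expect to be the real obstacle, since it needs topological and not merely coherent input: blowing up a smooth centre does not change $\pi_1$, and $\pi_1(E)=\pi_1(Y)$ because the fibre $\PP^{r-1}$ is simply connected, so the claim reduces to Lefschetz surjectivity $\pi_1(E)\twoheadrightarrow\pi_1(\widetilde{X})$ for the $(r-1)$-ample divisor $E$; over $\CC$ this comes from stratified Morse theory applied to the $(r-1)$-convex complement $\widetilde{X}\setminus E$, or from Hartshorne's topological results for ample subvarieties (applicable because $N_{Y/X}$ is ample and $\cd(X\setminus Y)$ is small), and the hypothesis $\dim Y=d\geq1$ is what pushes the conclusion down to $\pi_1$ rather than just higher homology. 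In short, (1) is the only part where I would actually compute --- the relative-duality bookkeeping, getting the identification $E\cong\PP(N_{Y/X}^{\vee})$ and the sign of $\OO_{\widetilde{X}}(E)|_E$ right, and remembering the cohomological ampleness criterion for vector bundles --- while (2)--(5) are either already in the text or quotations of classical comparison and topology theorems.
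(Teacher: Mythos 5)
Your proposal is a full reconstruction, whereas the paper's own ``proof'' consists of the citation to Ottem plus a single added observation: that $Y$ is connected because it is smooth (hence reduced, hence irreducible iff connected) and because part (3) with $i=0$ gives $\HH{0}{X}{\QQ}\cong\HH{0}{Y}{\QQ}$. Measured against Ottem's actual arguments, your treatment of (1)--(4) is essentially the right one: the identification $E\cong\PP(N_{Y/X}^\vee)$ with $\OO_{\widetilde X}(-E)|_E=\OO_E(1)$, the relative-duality computation $R^{r-1}p_*\OO_E(-m)\cong\Sym^{m-r}N_{Y/X}\otimes\det N_{Y/X}$, and the Leray collapse are exactly how ampleness of the normal bundle is extracted; the disjointness argument via $\cd(X\setminus Y)\leq r-1$ is correct (a complete $r$-dimensional $Z$ inside the complement would force $\cd(Z)\leq r-1$, contradicting $\HH{r}{Z}{\omega_Z}\neq 0$); and deferring (3) to the Corollary of Theorem~\ref{thm:gen Lefschetz} and (4) to Hartshorne's formal-duality comparison is legitimate. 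The excess-intersection step in (2) is stated loosely --- what one actually needs is Fulton--Lazarsfeld positivity of the degree-zero part of $c(N_{Y/X}|_{Y\cap Z})\cap s(Y\cap Z, Z)$ for an ample normal bundle --- but the idea is the right one.

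The genuine gap is in (5). You propose to get $\pi_1(E)\twoheadrightarrow\pi_1(\widetilde X)$ from ``stratified Morse theory applied to the $(r-1)$-convex complement $\widetilde X\setminus E$.'' But $(r-1)$-ampleness of $\OO_{\widetilde X}(E)$ is a coherent-cohomological condition; it gives $\cd(\widetilde X\setminus E)\leq r-1$, and it does \emph{not} give $(r-1)$-convexity of the complement in the Andreotti--Grauert sense. That implication is precisely the converse Andreotti--Grauert problem, which the introduction of this paper recalls is \emph{false} in general, by counterexamples of Ottem himself. So the Morse-theoretic route is not available, and ``Hartshorne's topological results for ample subvarieties'' is too vague to close the hole, since Hartshorne's topological statements are themselves conditional in this generality. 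The argument that actually works stays entirely cohomological: reduce surjectivity of $\pi_1(Y)\to\pi_1(X)$ to the connectedness of $f^{-1}(Y)$ for connected finite \'etale covers $f:X'\to X$; the preimage $f^{-1}(Y)$ is again an ample subscheme (ampleness of subschemes is preserved under finite surjective pullback, via Lemma~\ref{lem: ample formal}), so part (3) with $i=0<\dim Y$ --- this is where $d\geq 1$ enters --- forces $\HH{0}{X'}{\QQ}\cong\HH{0}{f^{-1}(Y)}{\QQ}$, i.e.\ connectedness, and a proper subgroup $\mathrm{im}\,\pi_1(Y)\subsetneq\pi_1(X)$ would produce a cover violating this. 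This is also exactly where the paper's one added remark (connectedness of $Y$ from smoothness plus $H^0$-Lefschetz) is doing its work.
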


\begin{proof}
 We will only discuss (5), since it is the only statement that is slightly different from its original source. Because $Y$ is smooth over a reduced
base, it is  automatically reduced, and by smoothness again, it is irreducible exactly when it is connected. But (3) in the case of $i=0$
implies that $Y$ is connected. The rest follows from Ottem's proof.  
\end{proof}

Based on the fact that a Lefschetz-type theorem holds for $q$-ample divisors, as seen in Theorem~\ref{thm:gen Lefschetz}, and that this forms one of the ingredients of the proof of the Kodaira vanishing theorem in the classical setup, cf.~\cite[Section~4.2]{PAG}, as well as on the fact that $q$-Kodaira vanishing continues to hold for \emph{geometrically} $q$-ample divisors, Theorem~\ref{thm:qKodairageometric}, one might hope that there is a $q$-Kodaira vanishing theorem for $q$-ample divisors. The following example shows that this is not true in general.
\begin{eg}[Counterexample to Kodaira vanishing for non-pseudo-effective $q$-ample divisors, Ottem; \cite{Ottem}, Section 9] \label{ex:counterexampleOttem}
Let $G=SL_3(\CC)$, $B\leqslant G$ the Borel subgroups consisting of upper triangular matrices, and consider the homogeneous space
$G/B$. By the Bott--Borel--Weil theorem and a brief computation Ottem shows the existence of a non-pseudo-effective line bundle $\shl$ on $G/B$, which is $1$-ample, but for which the cohomology group $\HH{2}{G/B}{\omega_{G/B} \otimes \shl}$ does not vanish.
\end{eg}

It turns out, however, that by putting geometric restrictions on the $q$-ample divisor in question, one can in fact prove Kodaira-style vanishing theorems.
We will do this in the next section.

\section{$q$-Kodaira vanishing for Du Bois divisors and log canonical pairs}\label{sect:qKodaira}

This section contains the proofs of various versions of Kodaira's vanishing theorem for $q$-ample divisors. First we present the argument in the smooth case, where the reasoning is particularly transparent and simple.

\begin{thm}\label{thm:Kodaira smooth}
Let $X$ be a smooth projective variety, $D$ a smooth reduced effective $q$-ample divisor on $X$. Then,
\[
 \HH{i}{X}{\OO_X(K_X+D)} \equ \{0\}\ \ \ \text{for all $i>q$}.
\]
\end{thm}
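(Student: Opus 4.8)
The plan is to realise $\OO_X(K_X+D)$ as the top term of the logarithmic de Rham complex of the pair $(X,D)$ and to read off the vanishing from the fact that $X\setminus D$ has small cohomological dimension. First I would record the standard local computation in analytic coordinates adapted to $D$: since $D$ is a \emph{smooth} (hence simple normal crossings) divisor in the smooth variety $X$, one has
\[
\Omega^n_X(\log D)\;\cong\;\omega_X\otimes\OO_X(D)\;=\;\OO_X(K_X+D),\qquad n=\dim X .
\]
This reduces the theorem to the statement $\HH{i}{X}{\Omega^n_X(\log D)}=\{0\}$ for all $i>q$.

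Next I would bring in the logarithmic Hodge--de Rham (Fr\"olicher) spectral sequence of the pair $(X,D)$,
\[
E_1^{p,i}\;=\;\HH{i}{X}{\Omega^p_X(\log D)}\;\Longrightarrow\;\HH{p+i}{X\setminus D}{\CC},
\]
which degenerates at $E_1$ by Deligne's theory of mixed Hodge structures on open smooth varieties (see, e.g., \cite{EsnaultViehweg}), $D$ being smooth in the smooth proper variety $X$. In parallel, since $\OO_X(D)$ is $q$-ample and $D$ is reduced, Remark~\ref{rmk:cd of complement} gives $\cd(X\setminus D)\leq q$; then, exactly as in the proof of Theorem~\ref{thm:gen Lefschetz}, the Fr\"olicher spectral sequence of the quasi-projective variety $X\setminus D$ shows that $\HH{k}{X\setminus D}{\CC}=\{0\}$ for all $k>n+q$.

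Finally I would combine the two facts. Degeneration at $E_1$ forces $\dim_\CC\HH{k}{X\setminus D}{\CC}=\sum_{p+i=k}\dim_\CC\HH{i}{X}{\Omega^p_X(\log D)}$, so every summand on the right vanishes as soon as $k>n+q$; taking $p=n$ and writing $k=n+i$ gives $\HH{i}{X}{\Omega^n_X(\log D)}=\{0\}$ whenever $n+i>n+q$, i.e.\ for all $i>q$, which together with the identification above is exactly the claim. The only substantive inputs are Deligne's $E_1$-degeneration and Ottem's cohomological-dimension bound (Remark~\ref{rmk:cd of complement}); everything else is bookkeeping, so in the smooth case I do not anticipate a real obstacle — the one point needing care is verifying that the smoothness of $D$ genuinely permits Deligne's degeneration and the local identification of $\Omega^n_X(\log D)$.

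As an alternative route, closer to the classical Lefschetz-based proof of Kodaira vanishing, one can use the residue sequence $0\to\omega_X\to\omega_X(D)\to\omega_D\to 0$ together with Serre duality to reformulate the claim as $\HH{j}{X}{\OO_X(-D)}=\{0\}$ for $j<n-q$, and then deduce this from the ideal-sheaf sequence of $D$ and Ottem's Lefschetz theorem (Theorem~\ref{thm:gen Lefschetz}), using that the restriction maps $\HH{k}{X}{\CC}\to\HH{k}{D}{\CC}$ are morphisms of pure Hodge structures and hence restrict to isomorphisms (respectively injections) on the $(0,k)$-Hodge pieces $\HH{k}{X}{\OO_X}\to\HH{k}{D}{\OO_D}$ in the relevant range. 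I would nevertheless present the logarithmic de Rham argument as the main proof, since it is the more transparent and self-contained of the two.
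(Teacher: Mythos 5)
Your main argument is correct, but it follows a genuinely different route from the paper. The paper's proof starts with Serre duality to reduce the claim to $\HH{j}{X}{\OO_X(-D)}=\{0\}$ for $j\leq n-q-1$, then applies Ottem's Lefschetz theorem (Theorem~\ref{thm:gen Lefschetz}) to get that $\HH{j}{X}{\CC}\to\HH{j}{D}{\CC}$ is an isomorphism (resp.\ injective) in the relevant range, uses functoriality of the Hodge decomposition to transfer this to $\HH{l}{X}{\OO_X}\to\HH{l}{D}{\OO_D}$, and concludes via the long exact sequence of $\ses{\OO_X(-D)}{\OO_X}{\OO_D}$ --- exactly the ``alternative route'' you sketch at the end (minus the residue sequence, which is not needed there; plain Serre duality on $X$ suffices). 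Your main proof instead identifies $\OO_X(K_X+D)$ with $\Omega^n_X(\log D)$ and combines Deligne's $E_1$-degeneration of the logarithmic Hodge--de Rham spectral sequence with the bound $\cd(X\setminus D)\leq q$ from Remark~\ref{rmk:cd of complement}; all the steps check out, since a smooth divisor is simple normal crossings and $X\setminus D$ is smooth, so both Deligne's theorem and the comparison of the log hypercohomology with $\HH{*}{X\setminus D}{\CC}$ apply. Your route has the advantage of bypassing Serre duality and the Hodge theory of $D$ entirely, and it delivers for free the stronger statement $\HH{i}{X}{\Omega^p_X(\log D)}=\{0\}$ for $i+p>n+q$ (an Akizuki--Nakano-type refinement which the paper only alludes to in a remark); it is essentially the Esnault--Viehweg approach that the paper itself points to for comparison. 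The paper's route, on the other hand, is the one that scales to the singular situation of Theorem~\ref{thm:reduceddivisor}: there the key input is that $\HH{j}{D}{\CC}\to\HH{j}{D}{\OO_D}$ identifies $\HH{j}{D}{\OO_D}$ with $Gr^0_F$ of the mixed Hodge structure, i.e.\ the Du Bois condition, whereas the locally free logarithmic de Rham complex and its degeneration are no longer available once $D$ is allowed to be singular.
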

\begin{proof}
By Serre duality, it suffices to show
\[
 \HH{i}{X}{\OO_X(-D)} \equ \{0\} \ \ \ \text{for all $i\leq n-q-1$}\ . 
\]
To this end,  follow the proof of Kodaira vanishing in \cite[Section 4.2]{PAG} with minor modifications. Ottem's 
generalized Lefschetz hyperplane theorem for effective $q$-ample divisors, Theorem~\ref{thm:gen Lefschetz}, asserts that 
\[
\HH{i}{X}{\CC} \lra \HH{i}{D}{\CC} 
\]
is an isomorphism for $0\leq i  < n-q-1$ and an injection for $i=n-q-1$.

The Hodge decomposition then gives rise to homomorphisms
\begin{equation}\label{eq:HodgeDecomposition}
 r_{k,l}: \HH{l}{X}{\Omega_X^k} \lra \HH{l}{D}{\Omega_D^k}
\end{equation}
for which $r_{k,l}$ is an isomorphism for $0\leq k+l<n-q-1$, and  an injection for $k+l=n-q-1$.

Consequently, one has that $r_{o,l}:\HH{l}{X}{\OO_X}\to \HH{l}{D}{\OO_D}$ is an isomorphism for $l<n-q-1$, and is injective for 
$l=n-q-1$. 
 
Finally, consider the exact sequence 
\[
 \ses{\OO_X(-D)}{\OO_X}{\OO_D}\ .
\]
The properties of $r_{0,l}$ applied to the associated long exact sequence imply that 
\[
 \HH{i}{X}{\OO_X(-D)} \equ \{0\}\ \ \ \text{for all $i\leq n-q-1$}
\]
as we wished. 
\end{proof}
\begin{rmk}
 Our proof here should also be compared with the discussion in \cite[\S 4]{EsnaultViehweg}, where bounds on the cohomological dimension of the complement of a smooth divisor are used in a similar way to derive vanishing theorems of the type considered here.
\end{rmk}

\begin{rmk}
 We note that we have used only part of the information provided by the homomorphisms \eqref{eq:HodgeDecomposition}. 
 The remaining instances lead to a Akizuki-Nakano-type vanishing result for effective reduced smooth $q$-ample divisors; cf.~the discussion in \cite[Section~4.2]{PAG}. 
 Note however that a generalisation to the singular setup cannot be expected, as already for ample divisors on Kawamata log terminal varieties the natural generalisation of Kodaira--Akizuki--Nakano 
 vanishing does not hold in general; we refer the reader to \cite[Section~4]{GKP11} for a discussion and for explicit counterexamples.
\end{rmk}

One of the original contributions of our work is the observation that the argument in the proof of Theorem~\ref{thm:Kodaira smooth} can be modified to go through in the Du Bois case, in particular, for 
log canonical pairs on smooth projective varieties. Our discussion here is very much influenced by ``Koll\'ar's principle'', cf.~\cite[Section~3.12]{ReidChapters}, 
that vanishing occurs when a coherent cohomology group has a topological interpretation, see \cite[Section~5]{KollarHigherVanishingII}. For this, we heavily depend on the discussion of vanishing theorems in \cite{KollarShafarevich}.

\begin{thm}[$q$-Kodaira vanishing for reduced effective Du Bois divisors]\label{thm:reduceddivisor}
 Let $X$ be a normal proper variety, $D\subset X$ a reduced effective (Cartier) divisor such that
\begin{enumerate}
 \item $\mathcal{O}_X(D)$ is $q$-ample,
 \item $X \setminus D$ is smooth,
 \item $D$ (with its reduced subscheme structure) is Du Bois.
\end{enumerate}
Then, we have
\begin{align}
 &\HH{j}{X}{\omega_X \otimes \mathcal{O}_X(D)}  = \{0\} \quad \text{ for all } j> q, \label{eq:DuBoisEq1}
\intertext{as well as}
&\HH{j}{X}{\mathcal{O}_X(-D)} = \{0\} \quad \text{ for all }j < \dim X - q .\label{eq:DuBoisEq2}
\end{align}
\end{thm}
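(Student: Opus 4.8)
The plan is to mimic the argument of Theorem~\ref{thm:Kodaira smooth}, replacing the honest Hodge-theoretic comparison $\HH{l}{X}{\OO_X} \to \HH{l}{D}{\OO_D}$ by its Du Bois analogue. The key structural input is that for the reduced divisor $D$ the natural map $\OO_D \to \underline{\Omega}^0_D$ is a quasi-isomorphism (this is the definition of $D$ being Du Bois), and that Du Bois singularities enjoy a Kodaira-type topological comparison: the map $\HH{l}{D}{\CC} \to \HH{l}{D}{\OO_D} = \HH{l}{D}{\underline{\Omega}^0_D}$ is surjective, equivalently the Hodge-to-de-Rham-type spectral sequence for $\underline{\Omega}^\bullet_D$ degenerates in the relevant sense; this is exactly the content of the vanishing-theorem package for Du Bois schemes recalled in \cite{KollarShafarevich}. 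So first I would establish that $\HH{i}{X\setminus D}{\CC} = \{0\}$ for $i > n+q$ — this is immediate from Remark~\ref{rmk:cd of complement} ($X\setminus D$ is smooth and its complement supports the $q$-ample divisor $D$, so $\cd(X\setminus D)\leq q$), combined with the Frölicher spectral sequence exactly as in the proof of Theorem~\ref{thm:gen Lefschetz}. Dualizing (Lefschetz duality for the smooth variety $X\setminus D$, or the long exact sequence for the pair) gives that $\HH{i}{X}{\CC}\to\HH{i}{D}{\CC}$ is an isomorphism for $i < n-q-1$ and injective for $i = n-q-1$.

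Next I would use the mixed Hodge structure on these cohomology groups and the functoriality of the Du Bois complex under the closed immersion $D\hookrightarrow X$ (here $X$ is smooth away from $D$, but I only need the $0$-th graded piece $\underline{\Omega}^0$, and $\underline{\Omega}^0_X = \OO_X$ since $X$ is normal with $X\setminus D$ smooth — actually one should be a touch careful and use $\operatorname{gr}^0_F$ of the de Rham complexes, which for $X$ is $\OO_X$ wherever $X$ is Du Bois; here I would invoke that $X$ is Du Bois because $X\setminus D$ is smooth and $D$ is Du Bois, via the standard fact that this configuration forces $X$ Du Bois, cf. the exact triangle relating $\underline{\Omega}^0_X$, $\underline{\Omega}^0_D$ and $\OO_X(-D)$). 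Taking $\operatorname{gr}^0_F$ of the comparison isomorphism above, and using the surjectivity of $\HH{l}{?}{\CC}\to\HH{l}{?}{\operatorname{gr}^0_F}$ for both $X$ and $D$ (Du Bois vanishing / strictness of the Hodge filtration), one obtains that the restriction map $\HH{l}{X}{\OO_X}\to\HH{l}{D}{\OO_D}$ is an isomorphism for $l<n-q-1$ and injective for $l=n-q-1$, just as in the smooth case. Feeding this into the long exact cohomology sequence associated to the exact triangle (equivalently, to $\ses{\OO_X(-D)}{\OO_X}{\OO_D}$, which is exact since $D$ is Cartier) yields $\HH{j}{X}{\OO_X(-D)} = \{0\}$ for $j < n-q$, which is \eqref{eq:DuBoisEq2}. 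Finally, Serre duality on the Cohen--Macaulay normal proper variety $X$ (note $\OO_X(-D)$ is a line bundle, $\omega_X$ is the dualizing sheaf) converts $\HH{j}{X}{\OO_X(-D)}$ into $\HH{n-j}{X}{\omega_X\otimes\OO_X(D)}$, and $n-j > q$ exactly when $j < n-q$, giving \eqref{eq:DuBoisEq1}. (If $X$ is only assumed normal and not a priori Cohen--Macaulay, I would instead phrase \eqref{eq:DuBoisEq1} via Grothendieck--Serre duality with $\omega_X^\bullet$; but $X$ Du Bois and the statement as written suggest working with the ordinary canonical sheaf, so I will keep the Cohen--Macaulay-friendly formulation and comment on it.)

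The main obstacle, and the place where the singular case genuinely differs from Theorem~\ref{thm:Kodaira smooth}, is the Hodge-theoretic step: on a singular Du Bois $D$ there is no Hodge \emph{decomposition}, only a mixed Hodge structure with a filtration, and one must know that the relevant comparison map $\HH{l}{D}{\CC}\to\HH{l}{D}{\OO_D}$ remains surjective and compatible with restriction from $X$. This is precisely Kollár's principle — cohomology with a topological interpretation vanishes — and the needed statement is the Du Bois version of $E_1$-degeneration together with the fact that $\operatorname{gr}^0_F R\Gamma(D,\CC) = R\Gamma(D,\OO_D)$; I would cite \cite{KollarShafarevich} (the vanishing-theorems chapter) and \cite{KollarHigherVanishingII} for this, and verify the compatibility with the closed immersion $D\hookrightarrow X$ using the functoriality of the Du Bois complex. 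Everything else — the cohomological dimension bound, the Frölicher/Lefschetz-duality step, and the final long-exact-sequence-plus-Serre-duality bookkeeping — is a routine transcription of the smooth proof.
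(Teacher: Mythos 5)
Your proposal follows essentially the same route as the paper's proof: Ottem's Lefschetz theorem (Theorem~\ref{thm:gen Lefschetz}) for the topological comparison, the Du Bois property to identify $\HH{j}{D}{\OO_D}$ with $Gr^0_F\bigl(\HH{j}{D}{\CC}\bigr)$, strictness of morphisms of mixed Hodge structures \cite[Corollaries~3.6 and~3.7]{PetersSteenbrink} to transfer the isomorphism/injectivity statements to $\HH{j}{X}{\OO_X}\to\HH{j}{D}{\OO_D}$, the ideal-sheaf sequence to obtain \eqref{eq:DuBoisEq2}, and Serre duality for \eqref{eq:DuBoisEq1}. The one loose end you flag but do not close is exactly the one the paper has to address: the theorem only assumes $X$ normal, so both the Serre duality step with $\omega_X$ and the identification $\HH{j}{X}{\OO_X}\cong Gr^0_F\bigl(\HH{j}{X}{\CC}\bigr)$ require control on the singularities of $X$. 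The paper settles both points at once via Schwede's theorem \cite[Theorem~5.1]{Schwede}: since $X\setminus D$ is smooth and $D$ is a Du Bois Cartier divisor, $X$ has rational singularities, hence is Du Bois and Cohen--Macaulay \cite[Theorem~5.10]{KM98}, so that Serre duality applies in the form \cite[Theorem~5.71]{KM98}. Your parenthetical ``standard fact that this configuration forces $X$ Du Bois'' is correct, but it is most cleanly obtained from that same result of Schwede, which you should invoke explicitly rather than defer to a comment.
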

We refer the reader to \cite{KovacsSchwede} and \cite[Chapter 12]{KollarShafarevich} for introductions to the theory of Du Bois singularities, as well as to \cite{KovacsIntuitiveDuBois} for a simple characterisation of the Du Bois property for projective varieties, related to the properties of Du Bois singularities used here.
\begin{proof}
 Since $D$ is effective and $q$-ample, and $X \setminus D$ is smooth, Theorem~\ref{thm:gen Lefschetz} states that the restriction morphism
\begin{equation*}
\begin{xymatrix}{
 \HH{j}{X}{\mathbb{C}} \ar^{\Phi_j}[r]&  \HH{j}{D}{\mathbb{C}}
}\end{xymatrix}
\end{equation*}
is an isomorphism for $0\leq j < n-q-1$, and injective for $j=n-q-1$. If $F^\bullet$ denotes Deligne's Hodge filtration on $\HH{j}{D}{\mathbb{C}}$ 
with associated graded pieces $Gr^\bullet_F$, then without any assumption on $D$,  the natural map 
$\alpha_j:\HH{j}{D}{\mathbb{C}} \to Gr^0_F\bigl( \HH{j}{D}{\mathbb{C}}\bigr)$ factors as
\begin{equation}
\begin{gathered}
\begin{xymatrix}{
  &   & \\
 \HH{j}{D}{\mathbb{C}} \ar@/^2pc/[rr]^{\alpha_j} \ar[r]^>>>>>{\beta_j} & \HH{j}{D}{\mathcal{O}_D}\ar[r]^>>>>>{\gamma_j} & Gr^0_F\bigl(  \HH{j}{D}{\mathbb{C}}\bigr)
}
\end{xymatrix}
\end{gathered}
\end{equation}
for each $j$.

Moreover, since $D$ is assumed to be Du Bois, the map $\gamma_j$ is an isomorphism, see \cite[Section~1]{KovacsIntuitiveDuBois}. By abuse of notation, the Hodge filtration on $\HH{j}{X}{\mathbb{C}}$ 
will likewise be denoted by $F^\bullet$. By standard results of Hodge theory, e.g.~see \cite[Theorem~5.33.iii)]{PetersSteenbrink}, the map $\Phi_j$ is morphism of Hodge structures; in particular it is compatible with the filtrations $F^\bullet$ on $\HH{j}{X}{\mathbb{C}}$ and $\HH{j}{D}{\mathbb{C}}$. Hence, for each $j$, we obtain a natural commutative diagram 
\[
\begin{xymatrix}{
 \HH{j}{X}{\mathbb{C}}  \ar[r]^{\Phi_j} \ar@{->>}[d] &   \HH{j}{D}{\mathbb{C}} \ar@{->>}[d]^{\beta_j}\\
 \HH {j}{X}{\mathcal{O}_X}  \ar[r]^{\phi_j}  \ar[d]^{\cong} & \HH{j}{D}{\mathcal{O}_D} \ar[d]_\cong^{\gamma_j}\\
  Gr^0_F\bigl(\HH{j}{X}{\mathbb{C}})\bigr)\ar[r]& Gr_F^0 \bigl(\HH{j}{D}{\mathbb{C}} \bigr).
}
\end{xymatrix}
\]
Since $\Phi_j$ is an isomorphism for $0 \leq j < n-q-1$, and injective for $j= n-q-1$, Hodge theory, \cite[Corollaries~3.6 and~3.7]{PetersSteenbrink}, implies that the same is true for $\phi_j$; i.e., $\phi_j$ is an isomorphism for $0 \leq j < n-q-1$, and injective for $j = n-q-1$. 
Looking at the long exact cohomology sequence associated with 
\[0 \to \mathcal{O}_X(-D) \to \mathcal{O}_X \to \mathcal{O}_D \to 0\]
we conclude that 
\begin{equation}\label{eq:dualform}\HH{j}{X}{\mathcal{O}_X(-D)} = \{0\} \quad \text{ for }0\leq j \leq n-q-1,
\end{equation}
as claimed in equation \eqref{eq:DuBoisEq2}.

Now, as $X \setminus D$ is smooth and $D$ is Du Bois, $X$ itself has rational singularities by a result of Schwede \cite[Theorem~5.1]{Schwede}, see also \cite[Section~12]{KovacsSchwede}. In particular, $X$ is Cohen-Macaulay, \cite[Theorem~5.10]{KM98}, and hence we may apply Serre duality \cite[Theorem~5.71]{KM98} to equation \eqref{eq:dualform} to obtain
\[\HH{j}{X}{\omega_X \otimes \mathcal{O}_X(D)} = \{0\} \text { for all }j>q, \]
as claimed in equation \eqref{eq:DuBoisEq1}.
\end{proof}
\begin{cor}[$q$-Kodaira vanishing for reduced log canonical pairs]
  Let $X$ be a smooth projective variety, $D\subset X$ a reduced effective (Cartier) divisor such that
\begin{enumerate}
 \item $\mathcal{O}_X(D)$ is $q$-ample,
 \item the pair $(X, D)$ is log canonical.
\end{enumerate}
Then, we have
\begin{align}
 &\HH{j}{X}{\omega_X \otimes \mathcal{O}_X(D)}  = \{0\} \quad \text{ for all } j> q, 
\intertext{as well as}
&\HH{j}{X}{\mathcal{O}_X(-D)} = \{0\} \quad \text{ for all }j < \dim X - q .
\end{align}
\end{cor}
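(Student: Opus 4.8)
The plan is to deduce the corollary directly from Theorem~\ref{thm:reduceddivisor}, so the task reduces to checking that the three hypotheses of that theorem are satisfied. Since $X$ is a smooth projective variety it is in particular a normal proper variety; the complement $X\setminus D$, being an open subscheme of the smooth variety $X$, is again smooth; and $\mathcal{O}_X(D)$ is $q$-ample by assumption. Hence hypotheses~(1) and~(2) of Theorem~\ref{thm:reduceddivisor} hold automatically, and everything comes down to hypothesis~(3): that $D$, with its reduced subscheme structure, is Du Bois.

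This is exactly where log canonicity enters. By the theorem of Koll\'ar and Kov\'acs that log canonical singularities are Du Bois --- in the form asserting that for a log canonical pair $(X,\Delta)$ the reduced boundary divisor is Du Bois, see \cite{KollarShafarevich} and the survey \cite{KovacsSchwede} --- the reduced divisor $D$ is Du Bois whenever $X$ is smooth and $(X,D)$ is log canonical. (Concretely, one uses that $(X,D)$ is then a Du Bois pair together with the fact that the smooth variety $X$ is itself Du Bois, and reads off the Du Bois property of $D$ from the defining exact triangle.) With hypothesis~(3) in place, Theorem~\ref{thm:reduceddivisor} applies and immediately yields both $\HH{j}{X}{\omega_X\otimes\mathcal{O}_X(D)}=\{0\}$ for $j>q$ and $\HH{j}{X}{\mathcal{O}_X(-D)}=\{0\}$ for $j<\dim X-q$.

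There is no real obstacle here beyond citing the correct form of the ``log canonical implies Du Bois'' statement: one must take care that it is the \emph{divisor} $D$ whose Du Bois property is being invoked --- the ambient space $X$ being Du Bois for the trivial reason that it is smooth --- and that the smoothness of $X$ is precisely what guarantees both that log canonicity of $(X,D)$ refers to $D$ as the reduced boundary and that the open set $X\setminus D$ demanded by Theorem~\ref{thm:reduceddivisor} is smooth. Once these points are noted, the corollary is immediate; in particular no further positivity- or Hodge-theoretic input beyond that already entering Theorem~\ref{thm:reduceddivisor} is required.
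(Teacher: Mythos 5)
Your proposal is correct and follows essentially the same route as the paper: the paper likewise reduces everything to Theorem~\ref{thm:reduceddivisor} and verifies the Du Bois hypothesis by observing that $D$ is a union of log canonical centers of the pair $(X,D)$, hence Du Bois by Koll\'ar--Kov\'acs \cite[Theorem~1.4]{KollarKovacsJAMS}. Your phrasing via the ``reduced boundary of a log canonical pair is Du Bois'' is just a restatement of that same result, so no substantive difference remains.
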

\begin{proof}
 Since $D$ is a union of log canonical centers of the pair $(X, D)$, it is Du Bois by \cite[Theorem~1.4]{KollarKovacsJAMS}. Hence, the claim follows immediately from Theorem~\ref{thm:reduceddivisor}.
\end{proof}
\begin{eg}
To give an example of a $q$-ample divisor that satisfies the assumptions of Theorems~\ref{thm:reduceddivisor} and \ref{thm:Kodaira smooth} above, and for which the desired vanishing does not follow directly from Sommese's results, let $Z \subset X$ be a smooth ample subscheme of pure codimension $r$ in a projective manifold $X$, and $\mathcal{O}_{\hat X}(E)$ the corresponding $(r-1)$-ample line bundle on the blow-up $\hat X$ of $X$ along $Z$, cf.~Example~\ref{ex:blowup}. Then, $\mathcal{O}_{\hat X}(E)$ is clearly not semiample, and hence not geometrically $q$-ample in the sense of Definition~\ref{defi:Sommese geom}. However, $E$ is effective, reduced, and smooth, and hence fulfills the assumptions of Theorems~\ref{thm:reduceddivisor} and \ref{thm:Kodaira smooth}. In this special case, the desired vanishing can also be derived from the results presented in \cite[\S 4]{EsnaultViehweg}.
\end{eg}

Finally, we will prove a version of the above for line bundles that are only $\mathbb{Q}$-effective. We start with the following slight generalisation of \cite[Theorem~12.10]{KollarShafarevich}.
\begin{prop}\label{prop:surjection}
 Let $X$ be a normal and proper variety, $\shl$ a rank one reflexive sheaf on $X$, and $D_i$ different irreducible Weil divisors on $X$. Assume that $\shl^{[m]}:= \bigl(\shl^{\otimes m}\bigr)^{**} \cong \mathcal{O}_X(\sum d_jD_j)$ for some integers $1 \leq d_j < m$. Set $m_j := m/\mathrm{gcd}(m, d_j)$. Assume furthermore that the pair $\left(X, \textstyle\sum (1- \frac{1}{m_j})D_j \right)$ is log canonical.
Then, for every $i\geq 0$ and $n_j \geq 0$, the natural map
\[\HH{i}{X}{\shl^{[-1]}(- \textstyle\sum n_j D_j)} \to \HH{i}{X}{\shl^{[-1]}} \]
is surjective.
\end{prop}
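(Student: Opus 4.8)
\emph{Overall strategy.} I would prove this exactly as one proves \cite[Theorem~12.10]{KollarShafarevich}: pass to the cyclic cover of $X$ that trivialises $\shl^{[m]}$, where the log canonical hypothesis becomes a statement about the singularities of the cover, carry out a Hodge-theoretic argument there, and descend by restricting to a single eigenspace of the Galois action. The only extra care needed compared to the classical statement is that $X$ is merely normal and $\shl$ merely reflexive, so one works with reflexive pullbacks and the cyclic-cover formula for Weil divisors throughout.

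\emph{Step 1 (cyclic cover and reduction).} Write $B=\sum_j d_jD_j$. The isomorphism $\shl^{[m]}\cong\OO_X(B)$ makes $\bigoplus_{k=0}^{m-1}\shl^{[-k]}$ into an $\OO_X$-algebra; let $\pi\colon Y\to X$ be the normalisation of its relative spectrum. Then $\pi$ is finite and carries a faithful $\mu_m$-action with $Y/\mu_m=X$, and a local computation at the generic point of each $D_j$ (the cover is $t^m=x^{d_j}$ locally; normalise) shows that $\pi$ is ramified of index $m_j=m/\gcd(m,d_j)$ along the reduced preimage of $D_j$, that $\pi^*D_j=m_j\cdot(\text{reduced preimage})$, and that
\[
\pi_*\OO_Y\;\cong\;\bigoplus_{k=0}^{m-1}\shl^{[-k]}\Bigl(\textstyle\sum_j\bigl\lfloor\tfrac{kd_j}{m}\bigr\rfloor D_j\Bigr),
\]
the summands being the $\mu_m$-isotypical pieces. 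Since $0\le d_j<m$ the weight-$1$ summand is exactly $\shl^{[-1]}$. Applying $\pi_*$ to $0\to\OO_Y(-\pi^*N)\to\OO_Y\to\OO_{\pi^*N}\to 0$ with $N=\sum n_jD_j$ and repeating the same computation, the weight-$1$ summand of $\pi_*\OO_Y(-\pi^*N)$ is $\shl^{[-1]}(-N)$ and the weight-$1$ part of the inclusion $\pi_*\OO_Y(-\pi^*N)\hookrightarrow\pi_*\OO_Y$ is precisely $\shl^{[-1]}(-N)\hookrightarrow\shl^{[-1]}$. Over $\CC$, passing to $\mu_m$-isotypical pieces is exact, so it suffices to prove that the weight-$1$ part of $\HH{i}{Y}{\OO_Y(-\pi^*N)}\to\HH{i}{Y}{\OO_Y}$ is surjective.

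\emph{Step 2 (singularities of the cover).} From the Hurwitz formula and the ramification indices computed above one gets $K_Y=\pi^*\bigl(K_X+\sum_j(1-\tfrac1{m_j})D_j\bigr)$. Since a finite morphism of normal varieties transports log canonicity in both directions \cite[Prop.~5.20]{KM98}, the hypothesis that $\bigl(X,\sum_j(1-\tfrac1{m_j})D_j\bigr)$ is log canonical forces $Y$ to have log canonical, hence Du Bois, singularities by \cite{KollarKovacsJAMS} (see also \cite{KovacsSchwede}). Now one runs, on the proper Du Bois variety $Y$ and on a log resolution adapted to $(\pi^*N)_{\mathrm{red}}$ (the Du Bois property is what identifies resolution cohomology with that of $Y$), the Hodge-theoretic mechanism behind the classical vanishing theorems \cite[\S4--\S6]{EsnaultViehweg}: the two coherent cohomology groups in question are the $\mathrm{Gr}^0$ for the Hodge filtration of the mixed Hodge structures on the cohomology of $Y$ and of $Y\setminus(\pi^*N)_{\mathrm{red}}$ with coefficients in the rank-one system attached to the weight-$1$ eigensheaf, and the fact that the coefficients $d_j/m$ all lie in $[0,1)$ supplies the $E_1$-degeneration and strictness needed for the restriction map on $\mathrm{Gr}^0$ to be surjective. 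Reading off the weight-$1$ eigenspace then yields the surjectivity of $\HH{i}{X}{\shl^{[-1]}(-N)}\to\HH{i}{X}{\shl^{[-1]}}$.

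\emph{Main obstacle.} Steps 1 and 2 are routine: the floor-function formula for $\pi_*\OO_Y$ is the standard normalised cyclic-cover computation, and checking that the prescribed log canonical condition is exactly what pins down the ramification so that $Y$ becomes Du Bois is a short discrepancy computation. The real work is the Hodge-theoretic descent in the last paragraph, for two reasons: first, it has to be carried out on the \emph{singular} cover $Y$, which being only log canonical need not even be Cohen--Macaulay, so one genuinely depends on the Du Bois property of $Y$ to reduce to a smooth model; and second, one must check that the $\mu_m$-eigenspace decomposition is compatible with the Hodge and weight filtrations. This compatibility is the whole point, since the full map $\HH{i}{Y}{\OO_Y(-\pi^*N)}\to\HH{i}{Y}{\OO_Y}$ is \emph{not} surjective in general (e.g. already for smooth $Y$), and it is only on the weight-$1$ eigenspace, governed by the fractional coefficients of the cyclic cover, that surjectivity holds.
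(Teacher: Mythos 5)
Your proof follows essentially the same route as the paper: pass to the normalised cyclic cover $Y\to X$, use the crepant pullback formula $K_Y = p^*\bigl(K_X + \sum (1-\tfrac{1}{m_j})D_j\bigr)$ to conclude that $Y$ is log canonical and hence Du Bois, and then perform the Hodge-theoretic descent on the weight-one eigenspace. The only difference is presentational: the paper outsources your Steps 1 and 2 to \cite[Propositions~20.2 and~20.3]{FlipsAndAbundance} and the entire eigensheaf decomposition plus Hodge-theoretic descent to \cite[Theorem~9.12]{KollarShafarevich}, whose sole hypothesis --- surjectivity of $\HH{j}{Y}{\CC}\to\HH{j}{Y}{\OO_Y}$ --- is exactly what the Du Bois property of $Y$ supplies.
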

\begin{proof} Let $p: Y \to X$ be the normalisation of the cyclic cover corresponding to the isomorphism $\shl^{[m]} \cong \mathcal{O}_X(\sum d_jD_j)$. By \cite[Proposition~20.2]{FlipsAndAbundance}, 
we have 
\[K_Y = p^*\bigl(K_X + \textstyle\sum (1- \frac{1}{m_j})D_j  \bigr),\]
and therefore, $Y$ is log canonical by \cite[Proposition~20.3]{FlipsAndAbundance}. Hence, $Y$ is Du Bois by \cite[Theorem~1.4]{KollarKovacsJAMS}, and the natural map $\HH{j}{Y}{\mathbb{C}} \to \HH{j}{Y}{\mathcal{O}_Y}$ is surjective, cf.~the discussion in the proof of Theorem~\ref{thm:reduceddivisor}. Consequently, the assumptions of \cite[Theorem~9.12]{KollarShafarevich} are fulfilled. This implies the claim.
\end{proof}

We are now in the position to prove a version of $q$-Kodaira vanishing that works for $\mathbb{Q}$-effective line bundles. 

\begin{thm}[$q$-Kodaira vanishing for effective log canonical $\mathbb{Q}$-divisors]\label{thm:QQversion}
  Let $X$ be a proper, normal, Cohen-Macaulay variety, $\shl$ a $q$-ample line bundle on $X$, and $D_i$ different irreducible Weil divisors on $X$. Assume that 
$\shl^{m} \cong \mathcal{O}_X(\sum d_jD_j)$ for some integers $1 \leq d_j < m$. Set $m_j := m/\mathrm{gcd}(m, d_j)$. Assume furthermore that the pair $\bigl(X, \textstyle\sum (1- \frac{1}{m_j})D_j \bigr)$  is log canonical.
Then, we have
\begin{align}
 &\HH{j}{X}{\omega_X \otimes \mathcal{O}_X(D)}  = \{0\} \quad \text{ for all } j> q, \label{eq:lceq1}
\intertext{as well as}
&\HH{j}{X}{\mathcal{O}_X(-D)} = \{0\} \quad \text{ for all }j < \dim X - q .\label{eq:lceq2}
\end{align}
\end{thm}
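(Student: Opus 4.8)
The plan is to reduce both vanishing statements to Proposition~\ref{prop:surjection} together with the defining property of $q$-ampleness, using Serre duality as the bridge between them. Write $n := \dim X$. As $X$ is proper and Cohen--Macaulay it carries a dualizing sheaf $\omega_X$, and Serre duality yields $\HH{j}{X}{\shl^{-1}} \cong \HH{n-j}{X}{\omega_X \otimes \shl}^\vee$ for every $j$; hence \eqref{eq:lceq1} and \eqref{eq:lceq2} are equivalent, and it suffices to prove \eqref{eq:lceq1}, namely $\HH{j}{X}{\omega_X \otimes \shl} = \{0\}$ for all $j > q$.

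First I would note that $\shl$, being a line bundle, is a rank one reflexive sheaf with $\shl^{[m]} = \shl^{\otimes m} \cong \OO_X(\sum d_j D_j)$, so all hypotheses of Proposition~\ref{prop:surjection} are satisfied. Applying it with $n_j := k d_j$ for an integer $k \geq 0$, and using that $\sum d_j D_j$ is Cartier (it represents the line bundle $\shl^{\otimes m}$), the natural restriction map
\[
 \HH{i}{X}{\shl^{-1-km}} \;\longrightarrow\; \HH{i}{X}{\shl^{-1}}
\]
is surjective for every $i \geq 0$. Both sheaves here are line bundles, so Serre duality turns this surjection into an injection
\[
 \HH{n-i}{X}{\omega_X \otimes \shl} \;\hookrightarrow\; \HH{n-i}{X}{\omega_X \otimes \shl^{1+km}}\,.
\]

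Next I would invoke the $q$-ampleness of $\shl$ for the coherent sheaf $\shf := \omega_X \otimes \shl$: there is an integer $N_0$ such that $\HH{\ell}{X}{\omega_X \otimes \shl^{1+N}} = \{0\}$ for all $\ell > q$ and all $N \geq N_0$. Taking $k$ with $km \geq N_0$, the right-hand group above vanishes whenever $n - i > q$, and injectivity forces $\HH{n-i}{X}{\omega_X \otimes \shl} = \{0\}$ for $n - i > q$; with $j = n - i$ this is precisely \eqref{eq:lceq1}, and \eqref{eq:lceq2} then follows by Serre duality. I do not anticipate a real obstacle: essentially all of the geometric content --- the normalized cyclic cover, the log canonicity/Du Bois input, and Koll\'ar's surjectivity theorem --- is already packaged into Proposition~\ref{prop:surjection}, leaving only a short formal argument. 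The two points that want a little care are the compatibility of Serre duality with the natural inclusion maps (so that the surjection on negative twists really dualizes to the expected injection between twisted $\omega_X$-cohomology groups), and the fact that the bound $N_0$ furnished by the definition of $q$-ampleness is uniform in the cohomological degree, so that a single $k$ handles all degrees $j > q$ at once.
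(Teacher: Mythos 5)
Your argument is correct and follows essentially the same route as the paper's own proof: apply Proposition~\ref{prop:surjection} with the twists $n_j = k d_j$ (equivalently, exponents $\equiv 1 \bmod m$) to get the surjection $\HH{i}{X}{\shl^{-1-km}} \twoheadrightarrow \HH{i}{X}{\shl^{-1}}$, dualize via Serre duality on the Cohen--Macaulay variety $X$ to an injection into $\HH{n-i}{X}{\omega_X \otimes \shl^{1+km}}$, and kill the target using $q$-ampleness. The two points you flag for care are exactly the ones implicitly handled in the paper, and they cause no difficulty.
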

\begin{proof} With Proposition~\ref{prop:surjection} at hand, the proof is the same as in the klt case, cf.~\cite[Chap.~10]{KollarShafarevich}. 
By Proposition~\ref{prop:surjection}, for all $i\geq 0$ and for all $k\in \mathbb{N}^{>0}$ such that that $k-1$ is divisible by $m$ we obtain a surjection
\[\HH{i}{X}{\shl^{-k}}\twoheadrightarrow \HH{i}{X}{\shl^{-1}}. \]
Since $X$ is Cohen-Macaulay, by Serre duality, \cite[Theorem~5.71]{KM98}, this surjection is dual to an injection
\begin{equation}\label{eq:injection}
\HH{n-i}{X}{\omega_X \otimes \shl} \hookrightarrow \HH{n-i}{X}{\omega_X \otimes \shl^k} \text{  for all }k \text{ as above}.
\end{equation}
As $\shl$ is $q$-ample, there exists a $k \gg 0$ such that 
\[\HH{n-i}{X}{\omega_X \otimes \shl^k}= \{0\} \text{ for all }n-i> q.\]
Hence, owing to the injection~\eqref{eq:injection}, we obtain 
\[\HH{j}{X}{\omega_X\otimes \shl} = \{0\} \text{ for all } j> q,\]
as claimed in equation \eqref{eq:lceq1}. The dual vanishing \eqref{eq:lceq2} then follows from a further application of Serre duality.
\end{proof}

\begin{rmk}For related work discussing ample divisors on (semi) log canonical varieties, the reader is referred to \cite{KSS10}.\end{rmk}

\begin{rmk}[Necessity of assumptions on the singularities]
 To see that some assumption on the singularities of the pair $(X, D)$ is necessary in Theorems~\ref{thm:reduceddivisor} and \ref{thm:QQversion}, we note that already Kodaira vanishing may fail already for ample line bundles on Gorenstein varieties with worse than log canonical singularities, see \cite[Example 2.2.10]{SommeseAdjunction}. Moreover, we note that for the dual form \eqref{eq:lceq2} of Kodaira vanishing the Cohen-Macaulay condition is strictly necessary: If $X$ is a projective variety with ample (Cartier) divisor $D$ for which \eqref{eq:lceq2} holds (with $q=0$), then $X$ is Cohen-Macaulay by \cite[Corollary~5.72]{KM98}.
\end{rmk}
\vspace{0.4cm}

\newcommand{\etalchar}[1]{$^{#1}$}
\providecommand{\bysame}{\leavevmode\hbox to3em{\hrulefill}\thinspace}
\providecommand{\MR}{\relax\ifhmode\unskip\space\fi MR }
\providecommand{\MRhref}[2]{%
  \href{http://www.ams.org/mathscinet-getitem?mr=#1}{#2}
}
\providecommand{\href}[2]{#2}

\begin{center}
 ------------------------
\end{center}

\end{document}